\theoremstyle{plain}
\newtheorem{thm}{Theorem}[section]
\newtheorem*{thma}{Theorem A}
\newtheorem*{thm*}{Theorem}
\newtheorem{lm}[thm]{Lemma}
\newtheorem{cor}[thm]{Corollary}
\newtheorem*{corb}{Corollary B}
\newtheorem{prop}[thm]{Proposition}
\newtheorem*{conj*}{Conjecture}
\theoremstyle{remark}
\theoremstyle{definition}
\newtheorem*{defn*}{Definition}
\newtheorem{Remark}[thm]{Remark}
\newtheorem{I_Remark*}{Remark}
\newtheorem{defn}[thm]{Definition}
\newcommand{\nc}{\newcommand}
\newcommand{\beq}{\begin{equation}}
\newcommand{\eeq}{\end{equation}}
\newcommand{\bpmx}{\begin{pmatrix}}
\newcommand{\epmx}{\end{pmatrix}}
\newcommand{\bbmx}{\begin{bmatrix}}
\newcommand{\ebmx}{\end{bmatrix}}
\newcommand{\wh}{\widehat}
\newcommand{\wtd}{\widetilde}
\newcommand{\beqcd}[1]{\begin{equation*}\label{#1}\tag{#1}}
\newcommand{\eeqcd}{\end{equation*}}
\numberwithin{equation}{section}
\newenvironment{mylist}{
  \begin{enumerate}{}{%
      \setlength{\itemsep}{5pt} \setlength{\parsep}{0in}
      \setlength{\parskip}{0in} \setlength{\topsep}{0in}
      \setlength{\partopsep}{0in}
      \setlength{\leftmargin}{0.17in}}}{\end{enumerate}}
\def\parref#1{\ref{#1}}
\def\thmref#1{Theorem~\parref{#1}}
\def\propref#1{Proposition~\parref{#1}}
\def\corref#1{Corollary~\parref{#1}}     \def\remref#1{Remark~\parref{#1}}
\def\secref#1{\S\parref{#1}}
\def\lmref#1{Lemma~\parref{#1}}
\def\subsecref#1{\S\parref{#1}}
\def\makeop#1{\expandafter\def\csname#1\endcsname
  {\mathop{\rm #1}\nolimits}\ignorespaces}
\DeclareMathOperator{\GO}{GO}
\DeclareMathOperator{\GSO}{GSO}
\DeclareMathAlphabet{\mathpzc}{OT1}{pzc}{m}{it}
\DeclareSymbolFont{cyrletters}{OT2}{wncyr}{m}{n}
\DeclareMathSymbol{\SHA}{\mathalpha}{cyrletters}{"58}
\def\makebb#1{\expandafter\def
  \csname bb#1\endcsname{{\mathbb{#1}}}\ignorespaces}
\def\makebf#1{\expandafter\def\csname bf#1\endcsname{{\bf
      #1}}\ignorespaces}
\def\makegr#1{\expandafter\def
  \csname gr#1\endcsname{{\mathfrak{#1}}}\ignorespaces}
\def\makescr#1{\expandafter\def
  \csname scr#1\endcsname{{\EuScript{#1}}}\ignorespaces}
\def\makecal#1{\expandafter\def\csname cal#1\endcsname{{\mathcal
      #1}}\ignorespaces}
\def\doLetters#1{#1A #1B #1C #1D #1E #1F #1G #1H #1I #1J #1K #1L #1M
                 #1N #1O #1P #1Q #1R #1S #1T #1U #1V #1W #1X #1Y #1Z}
\def\doletters#1{#1a #1b #1c #1d #1e #1f #1g #1h #1i #1j #1k #1l #1m
                 #1n #1o #1p #1q #1r #1s #1t #1u #1v #1w #1x #1y #1z}
\def\semidirect{\rtimes}
\def\abs#1{\left|#1\right|}
\def\Qp{\Q_p}
\def\Zp{\Z_p}
\def\rmN{{\mathrm N}}
\def\cA{{\mathcal A}}  
\def\cB{\mathcal B}
\def\cG{{\mathcal G}}
\def\cL{{\mathcal L}}
\def\cI{\mathcal I}
\def\cJ{\mathcal J}
\def\cM{\mathcal M}
\def\cR{{\mathcal R}}
\def\cO{\mathcal O}
\def\cS{{\mathcal S}}
\def\cf{{\mathcal f}}
\def\cW{{\mathcal W}}
\def\cZ{\mathcal Z}
\def\cV{{\mathcal V}}
\def\cP{{\mathcal P}}
\def\cJ{\mathcal J}
\def\cU{\mathcal U}
\def\bfc{\mathbf c}
\def\bff{\mathbf f}
\def\bfi{\mathbf i}
\def\sF{\mathscr F}
\def\sV{\mathscr V}
\newcommand{\Z}{\mathbf Z}
\newcommand{\Q}{\mathbf Q}
\newcommand{\R}{\mathbf R}
\newcommand{\C}{\mathbf C}
\newcommand{\A}{\mathbf A}    
\def\bbH{\mathbb H}
\def\frakp{{\mathfrak p}}
\def\frakH{{\mathfrak H}}
\def\frakA{\mathfrak A}
\def\frakS{\mathfrak S}
\def\frakN{\mathfrak N}
\def\bfone{{\mathbf 1}}
\def\etale{{\'{e}tale }}
\def\padic{\text{$p$-adic }}
\def\BS{Bruhat-Schwartz }
\newcommand{\<}{\langle}   
\renewcommand{\>}{\rangle} 
\def\ot{\otimes}
\def\hookto{\hookrightarrow}
\def\ol{\overline}  \nc{\opp}{\mathrm{opp}} \nc{\ul}{\underline}
\newcommand{\pair}[2]{\< #1, #2\>}
\newcommand{\bbpair}[2]{\<\!\<#1,#2\>\!\>}
\newcommand{\altpair}[2]{(#1,#2)}
\newcommand{\rpair}[2]{(#1,#2)}
\newcommand{\pairing}{\pair{\,}{\,}}
\def\bbpairing{\bbpair{\,}{\,}}
\def\rpairing{\altpair{\,}{\,}}
\def\XYmatrix{\xymatrix@M=8pt} 
\def\ncmd{\newcommand}
\ncmd{\xysubset}[1][r]{\ar@<-2.5pt>@{^(-}[#1]\ar@<2.5pt>@{_(-}[#1]}
\ncmd{\XYmatrixc}[1]{\vcenter{\XYmatrix{#1}}}
\ncmd{\xyto}[1][r]{\ar@{->}[#1]}
\ncmd{\xyinj}[1][r]{\ar@{^(->}[#1]}
\ncmd{\xysurj}[1][r]{\ar@{->>}[#1]}
\ncmd{\xyline}[1][r]{\ar@{-}[#1]}
\ncmd{\xydotsto}[1][r]{\ar@{.>}[#1]}
\ncmd{\xydots}[1][r]{\ar@{.}[#1]}
\ncmd{\xyleadsto}[1][r]{\ar@{~>}[#1]}
\ncmd{\xyeq}[1][r]{\ar@{=}[#1]} \ncmd{\xyequal}[1][r]{\ar@{=}[#1]}
\ncmd{\xyequals}[1][r]{\ar@{=}[#1]}
\ncmd{\xymapsto}[1][r]{l\ar@{|->}[#1]}\ncmd{\xyimplies}[1][r]{\ar@{=>}[#1]}
\ncmd{\xyiso}{\ar[r]_-{\sim}}
\def\injxy{\ar@{^(->}}
\newcommand{\pMX}[4]{\begin{pmatrix}
{#1}& {#2}\\
{#3}&{#4}\end{pmatrix} }
 \newcommand{\pDII}[2]{\begin{pmatrix}{#1}&0
 \\0&{#2}\end{pmatrix}}
\newcommand{\seesaw}[4]{{#1}\ar@{-}[rd]\ar@{-}[d]&{#2}\ar@{-}[d]\\
{#3}\ar@{-}[ru]&{#4}}
\def\cf{\mbox{{\it cf.} }}
\def\uf{\varpi} 
\def\Abs{{|\!\cdot\!|}} 
\def\ndivides{\nmid}
\def\x{{\times}}
\def\xx{\,\times\,}
\def\onehalf{{\frac{1}{2}}}
\def\e{\varepsilon} 
\def\al{\alpha}
\def\om{\omega}
\def\iso{\simeq}
\def\con{\equiv}
\def\bksl{\backslash}
\newcommand\stt[1]{\left\{#1\right\}}
\def\ep{\epsilon}
\def\lam{\lambda}
\def\sg{\sigma}
\def\disjoint{\sqcup}
\def\bigot{\bigotimes}
\def\divides{\mid}
\renewcommand\pmod[1]{\,(\mbox{mod }{#1})}
\renewcommand\Re{\text{Re}\,}
\DeclareMathAlphabet{\mathpzc}{OT1}{pzc}{m}{it} 
\theoremstyle{definition}
\numberwithin{equation}{section}
\renewcommand{\bar}{\overline}
\def\rmd{{\rm d}}
\def\ulk{{\ul{k}}}
\def\test{\varphi^\star}
\def\q{p}
\def\Re{{\mathrm Re}\,}
\title[Inner product formula for Yoshida lifts]{Inner product formula for Yoshida lifts}
\author[M.L. Hsieh]{Ming-Lun Hsieh}
\address{Institute of Mathematics, Academia Sinica~\\ Taipei 10617, Taiwan\and National Center for Theoretic Sciences}
\email{mlhsieh@math.sinica.edu.tw}
\author[K. Namikawa]{Kenichi Namikawa}
\address{ Department of Mathematics, School of Engineering, Tokyo Denki University~\\
5, Asahicho, Senju, Adachi city, Tokyo, 120-8551, Japan~
}
\email{namikawa@mail.dendai.ac.jp}
\date{\today}
\subjclass[2010]{11F27, 11F46}
\begin{document}
\begin{abstract}
We prove an explicit inner product formula for vector-valued Yoshida lifts by an explicit calculation of local zeta integrals in the Rallis inner product formula for ${\rm O}(4)$ and $\Sp(4)$. As a consequence, we obtain the non-vanishing of Yoshida lifts. 
\end{abstract}
\maketitle
\tableofcontents
\def\Asai{{\rm Asai}}

\section{Introduction}\label{intro}
Explicit formulas for Petersson norms of modular forms play an important role in the study of the connection between congruences among modular forms and special values of $L$-functions. The aim of this paper is to give an explicit formula for the Petersson norm of Yoshida lifts. Let $F$ be either $\Q\oplus\Q$ or a real quadratic field of $\Q$ and let $\frakN$ be an ideal of the ring of integers $\cO_F$ of $F$. Let $\ulk=(k_1,k_2)$ be a pair of non-negative integers with $k_1\geq k_2$. Yoshida lifts are explicit vector-valued Siegel modular forms of genus two and weight $\Sym^{2k_2}(\C^2)\ot\det^{k_1-k_2+2}$ associated with a holomorphic newform $f$ on $\PGL_2(\A_F)$ of conductor $\frakN$ and weight $(2k_1+2,2k_2+2)$. Note that $f$ is given by a pair $(f_1,f_2)$ of elliptic newforms if $F=\Q\oplus \Q$, and $f$ is a Hilbert modular newform over a real quadratic field if $F$ is a real quadratic field. The scalar-valued Yoshida lifts ($k_2=0$) were constructed by H. Yoshida in \cite{yo80} via theta lifting from ${\rm SO}(4)$ to $\Sp(4)$, and his construction was extended to the vector-valued Yoshida lifts ($k_2>0$) by  B\"{o}cherer and Schulze-Pillot (\cf \cite[\S 1]{bsp97} and \cite[\S 3]{hn15}). In the sequel, Yoshida lifts are said to be of type (I) if $F=\Q\oplus \Q$ and of type (II) if $F$ is a real quadratic field. The non-vanishing of Yoshida lifts was also conjectured by Yoshida himself, which was later proved in \cite{bsp97} for Yoshida lifts of type (I). Then our main result is an explicit Rallis inner product formula for Yoshida lifts, which relates the Petersson norm of the Yoshida lift to special values of the Asai $L$-function $L({\rm As}^+(f),s)$ attached to $f$. As a consequence of our formula, we prove the non-vanishing of Yoshida lifts of type (I) and (II). 
 
To state our main result precisely, we introduce some notation. Let $c$ be the non-trivial automorphism of $F$ and let $\Delta_F$ be the discriminant of $F$. Denote by $f^c$ the Galois conjugation of $f$. 
We assume that $f$ is not Galois self-dual, namely
\[f\not =f^c,\]
and that the conductor $\frakN$ of $f$ is a square-free product of prime ideals of $\cO_F$ with $(\frakN,\Delta_F)=1$ and is divisible by $N^-$ a square-free product of an odd number of rational primes split in $F$. Let $D_0$ be the definite quaternion algebra over $\Q$ of absolute discriminant $N^-$ and let $D=D_0\ot_\Q F$. By our assumption on $N^-$, $D$ is the totally definite quaternion algebra over $F$ ramified at $N^-\cO_F$. Let $R$ denote the Eichler order in $D$ of level $\frakN^+$. For $i=1,2$, let $\cW_{k_i}(\C):=\Sym^{2k_i}(\C^2)\ot\det^{-k_i}$ be the algebraic representation of $\GL_2(\C)$ of the highest weight $(k_i,-k_i)$. By the Jacquet-Langlands-Shimizu correspondence, there exist a vector-valued newform $\bff:D^\x\bksl D_\A^\x/\wh R^\x\to \cW_{k_1}(\C)\boxtimes\cW_{k_2}(\C)$ unique up to scalar such that $\bff$ shares with same Hecke eigenvalues with $f$ at $p\nmid N^-$ (\cf\cite[\S 3.3]{hn15}).  Let $*$ be the main involution of $D$ and let $V=\stt{x\in D\mid x^*=x^c}$ be the four dimensional $\Q$-vector space with the positive definite quadratic form ${\rm n}(x)=xx^*$. Following \cite[p.196]{yo80}, the group $G':=\stt{x\in D^\x\mid {\rm n}(x)=1}$ acts on $V$ via the action $\varrho(a)x=ax(a^c)^*$ and the image $\varrho(G')\subset \Aut(V)$ is the special orthogonal group $\SO(V)$. We can thus view $\bff$ as an automorphic form on ${\rm SO}(V)(\A)$ and consider its theta lifts to $\Sp(4)$. Let $N=\frakN\cap\Z$ and let $N_F=N\Delta_F$. Let $\frakH_2$ be the Siegel upper half plane of degree two and $\Gamma_0^{(2)}(N_F)\subset \Sp_4(\Z)$ be the Siegel parabolic subgroup of level $N_F$. Let $\cL(\C):=\Sym^{2k_2}(\C^2)\det^{k_1-k_2+2}$ be the representation of $\GL_2(\C)$ of the highest weight $(k_1+k_2+2,k_1-k_2+2)$. In \cite[\S 3.7]{hn15}, we apply the theta lifting from ${\rm SO}(V)$ to $\Sp(4)$ to obtain the vector-valued Yoshida lift $\theta_\bff^*:\frakH_2\to \cL(\C)$ attached to $\bff$ and a distinguished \BS funciton $\test$ on $V_\A^{\oplus2}$ with value in $\cW_{k_1}(\C)\ot\cW_{k_2}(\C)\ot\cL(\C)$ (see \secref{secYos} for more details). In particular, $\theta^*_\bff$ is exactly the scalar-valued Siegel modular form constructed in \cite{yo80} when $k_2=0$. The Yoshida lift $\theta^*_\bff$ is a vector-valued Siegel modular form of level $\Gamma^{(2)}_0(N_F)$, which is an eigenfunction of Hecke operators at $p\ndivides N_F$, and the associated spin $L$-function $L(\theta_\bff^*,s)$ is given by $L(f_1,s-k_2)L(f_2,s-k_1)$ if $F=\Q\oplus\Q$ and $f=(f_1,f_2)$ and by $L(f,s-k_2)$ if $F$ is a real quadratic field. Let $\cB_\cL:\cL(\C)\ot\cL(\C)\to\C$ be the positive definite Hermitian pairing defined in \eqref{E:pairBL} and define the Petersson norm of $\theta^*_\bff$ by \[
     \pair{\theta^*_{\bff}}{\theta^*_{\bff}}_{\frakH_2} 
  =  \int_{\Gamma_0^{(2)}(N_F) \backslash \frakH_2} \cB_\cL(\theta^*_{\bff}(Z),\theta^*_{\bff}(Z)) (\det Y)^{k_1+2} \frac{\rmd X\rmd Y}{(\det Y)^{3} }.
\]
Let $\pair{\bff}{\bff}_R$ be the Peterson norm of $\bff$ defined in \eqref{E:fnorm} of \secref{S:PetYL}.
Now we state our main result in the simple case $\frakN=N\cO_F$. For $p\divides N$, denote by $\e_p\in\stt{\pm 1}$ the Atkin-Lehner eigenvalues of $\bff$ at $p$ (see \eqref{E:Atkineigenvalue} for the definition). 
  
\begin{thma}[\thmref{T:ClaInnPrd}]\label{main} Suppose further that $\frakN=N\cO_F$. Then we have 
\begin{align*}
    \frac{\pair{\theta^*_{\bff}}{\theta^*_{\bff}}_{\frakH_2}}{\pair{\bff}{\bff}_R}  
=&L({\rm As}^+(f),k_1+k_2+2)\cdot (4\pi)^{-(2k_1+3)}\Gamma(k_1+k_2+2)\Gamma(k_1-k_2+1)\\
&\times \frac{N\cdot 2^{4r_{F,2}-r_F-2}}{(2k_1+1)(2k_2+1)} \cdot \prod_{p\divides N}(1+\e_p)\cdot \prod_{  p\mid \Delta_F  } (1+p^{-1}),\end{align*}
where $r_F$ is the number of prime factors of $\Delta_F$, and $r_{F,2}=1$ if $2\divides \Delta_F$ and $0$ otherwise.
\end{thma}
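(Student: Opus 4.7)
The plan is to apply the Rallis inner product formula for the dual reductive pair $(\mathrm{O}(V), \mathrm{Sp}_4)$. Translating $\pair{\theta^*_\bff}{\theta^*_\bff}_{\frakH_2}$ to its adelic counterpart, the formula presents it as the product of $\pair{\bff}{\bff}_R$, a special value of the doubling $L$-function, and a product of local zeta integrals evaluated on the Bruhat--Schwartz function $\test$ fixed in \cite{hn15}. Since $\mathrm{SO}(V)$ is built from the quaternion algebra $D = D_0 \ot_\Q F$ restricted from $F$ to $\Q$, the doubling $L$-function for $\bff$ is precisely $L(\mathrm{As}^+(f), s)$, and the inner product formula then locates the special value at $s = k_1+k_2+2$.

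The bulk of the proof is the evaluation of local zeta integrals. For $p \nmid N\Delta_F\infty$ the Schwartz function $\test_p$ is spherical and the standard unramified doubling calculation returns $L_p(\mathrm{As}^+(f), k_1+k_2+2)$ together with a local volume factor. For $p \divides N$, $\test_p$ is supported on an Eichler order of $D_p$; the zeta integral collapses to a sum of two matrix coefficients of $\pi_{\bff,p}$ that are interchanged by the local Atkin--Lehner involution, and the eigenvalue relation $U_p \bff = \e_p \bff$ together with the index of the Eichler order yields the factor $N \prod_{p \divides N}(1+\e_p)$. For $p \divides \Delta_F$ the quadratic space $V_p$ is twisted by the quadratic character of $F_p/\Q_p$; an Iwasawa decomposition of $\mathrm{SO}(V_p)$ produces the Euler factor $(1+p^{-1})$, while the measure normalization distinguishing dyadic from odd ramified primes globally contributes the power $2^{4 r_{F,2} - r_F - 2}$.

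For the archimedean place, $\test_\infty$ is a harmonic polynomial on $V_\infty^{\oplus 2}$ with values in $\cW_{k_1}(\C) \ot \cW_{k_2}(\C) \ot \cL(\C)$, constructed explicitly in \cite[\S 3.7]{hn15}. One evaluates the archimedean zeta integral by Fourier-expanding along the Siegel parabolic of $\mathrm{Sp}_4$ and using the Iwasawa decomposition in the Schr\"odinger model of the Weil representation. The coupling of the Gaussian with the polynomial part collapses to standard gamma integrals, yielding $(4\pi)^{-(2k_1+3)} \Gamma(k_1+k_2+2) \Gamma(k_1-k_2+1)$. The pairing $\cB_\cL$ on the coefficient space of $\cL(\C)$ accounts for the dimension factor $(2k_1+1)(2k_2+1)$ in the denominator. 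Multiplying all local contributions, converting the adelic norm back to the classical one, and pairing the remaining $L^S$ with the bad Euler factors reassembles the full $L(\mathrm{As}^+(f), k_1+k_2+2)$ and produces the stated formula.

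The hard part will be the archimedean computation: one must simultaneously track the action of the Weil representation on $\test_\infty$, the pluri-harmonic projection onto $\cL(\C)$, and the pairing $\cB_\cL$, and confirm that the resulting integral gives Gamma factors with the predicted shifts. A secondary obstacle is handling the $p \divides N$ computation uniformly across primes split in $F$ (dividing $N^+$) and primes inert in $F$ (dividing $N^-$), where $D_p$ is respectively a matrix algebra or a division algebra and the local newvector of $\pi_{\bff,p}$ must be identified accordingly; once this is done, the remaining passage from the adelic to the classical Petersson norm is routine bookkeeping of Haar measure normalizations.
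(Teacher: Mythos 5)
Your proposal follows essentially the same route as the paper: apply the Rallis inner product formula of Gan--Ichino and Gan--Qiu--Takeda for the pair $(\mathrm{O}(V),\mathrm{Sp}_4)$, evaluate the resulting local zeta integrals on the fixed test function $\test$ at all places (unramified, $p\mid N$, $p\mid\Delta_F$, and $\infty$), and then unwind the adelic/classical normalization of measures and pairings. The overall structure and all the named inputs match the paper's Propositions~5.2--5.3, the local computations of Section~6, and the measure bookkeeping in the proof of Theorem~5.6.

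Two points in your description of the work, however, do not reflect what the computation actually looks like. First, the archimedean integral is \emph{not} evaluated by Fourier-expanding along the Siegel parabolic of $\mathrm{Sp}_4$: after the reduction from the Rallis formula, the archimedean local zeta integral collapses (by compactness of $\mathrm{O}(V_\infty)$ and Schur orthogonality on $\cW_{\ulk}(\C)$) to the Hermitian norm $\cI(\test_\infty)=\int_{\bfX_\infty}\langle\test_\infty(x),\test_\infty(x)\rangle_{\cW\ot\cL}\,\rmd x$, which is computed by a radial/angular decomposition of $\bbH^{\oplus 2}$: the radial part gives the Gamma factors, and the angular part is an integral over $\SU_2(\R)^2$ evaluated via the Weyl integration formula together with a nontrivial binomial identity; the symplectic group plays no role at this stage. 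Second, the factor $(2k_1+1)(2k_2+1)$ in the denominator does \emph{not} come from $\cB_\cL$; it is $\dim\cW_{\ulk}(\C)$, arising from Schur orthogonality on the coefficient space of the form $\bff$. The pairing $\cB_\cL$ on $\cL_\kappa(\C)\cong\Sym^{2k_2}$ contributes the separate scalar $(-1)^{k_2}/(2k_2+1)$ (Lemma~5.5), which only happens to cancel one of the two $(2k_2+1)$ factors appearing in Corollary~5.4. These are attribution/exposition slips rather than structural errors; the target formula you state is the correct one, and the strategy as a whole is sound, with the combinatorial identity behind the archimedean angular integral being the genuinely delicate step you have correctly flagged as the crux.
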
If $\frakN\not =N\cO_F$, then we reply $\bff$ with the stablilized newform $\bff^\dagger$ defined in \eqref{E:levelraisingf}, and \thmref{T:ClaInnPrd} provides the formula for the Petersson norm of $\theta^*_{\bff^\dagger}$. Note that the left-hand side of the formula is independent of the choice of the newform $\bff$ since a newform is unique up to a scalar. In fact, $\bff$ can be normalized so that $\bff|_{\wh D^\x}$ takes values in the Hecke field $\Q(f)$ of $f$, namely the field generated by the Fourier coefficients of $f$ over $\Q$, and hence $\theta_\bff^*$ is defined over $\Q(f)$ in view of the formula for Fourier coefficients of $\theta_\bff^*$ in the proof of \cite[Proposition 5.1]{hn15}. This allows us to the study algebraicity of the special value $L({\rm As}^+(f),k_1+k_2+2)$ by the method in \cite{saha15}.

\begin{Remark} We give some additional comments on the case where $F=\Q\oplus\Q$ and $f=(f_1,f_2)$ is given by a pair of elliptic newforms. \begin{enumerate}
\item[(i)] The Asai $L$-function $L({\rm As}^+(f),s)=L(f_1\ot f_2,s)$ is the Rankin-Selberg convolution of $f_1$ and $f_2$. In this case, an inner product formula for Yoshida lifts of type (I) was also derived in \cite[Corollary 8.8]{bdsp12} by the Rankin-Selberg method. In \cite[Conjecture 5.19]{ak13}, Agarwal and Klosin formulated a conjecture on an explicit inner product formula of scalar-valued Yoshida lifts of type (I). Our Theorem A confirms their conjecture and further generalizes to the vector-valued Yoshida lifts.
\item[(ii)]  Given a prime $p>k_1$, under some mild assumptions on the residual \padic Galois representations attached to $f_1$ and $f_2$, it is known that $\bff$ can be normalized such that $\theta_\bff^*$ has Fourier coefficients in the ring of integers of the Hecke field of $f_1$ and $f_2$ localized at $p$ and is non-vanishing modulo $p$ (See \cite[\S 5]{hn15}) and that the Petersson norm $\pair{\bff}{\bff}$ is given by a product of the congruence numbers of $f_1$ and $f_2$ up to a \padic unit (See \cite{pw11} and \cite{ch16}). In particular, the period ratio $\Omega_{1,2}$ in \cite[Remark 6.4]{ak13} is a $p$-unit in many situations.
\end{enumerate}
\end{Remark}
Let $\pi_f$ be the unitary cuspidal automorphic representation of $\GL_2(\A_F)$ associated with $f$. Then in terms of automorphic $L$-functions, we have \[L(s,{\rm As}^+(\pi_f))=\Gamma_\C(s+k_1+k_2+1)\Gamma_\C(s+k_1-k_2)L({\rm As}^+(f), s+k_1+k_2+1).\]
By the non-vanishing of $L(1,{\rm As}^+(\pi))$ (\cf\cite[Theorem 4.3]{shahidi15}), we obtain the following consequence on the non-vanishing of Yoshida lifts, generalizing the main result in \cite{bsp97} to Yoshida lifts of type (II).
\begin{corb}If $\e_p=1$ for every $p\divides N$, then the vector-valued Yoshida lift $\theta^*_\bff$ is non-zero.
\end{corb}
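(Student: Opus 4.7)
The plan is to deduce the statement directly from Theorem A by checking that, under the hypothesis $\e_p = +1$ for every $p \mid N$, every factor on the right-hand side of the explicit inner product formula is non-zero. Since $\pair{\theta^*_\bff}{\theta^*_\bff}_{\frakH_2} \neq 0$ forces $\theta^*_\bff \not\equiv 0$, this suffices.

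First I would observe that $\pair{\bff}{\bff}_R > 0$: because $D$ is totally definite, the Petersson product on $D^\x \bksl D_\A^\x / \wh R^\x$ is a finite sum of values of a positive-definite Hermitian pairing on $\cW_{k_1}(\C) \boxtimes \cW_{k_2}(\C)$ applied to $\bff$, and this sum vanishes only if $\bff \equiv 0$, which is excluded since $\bff$ arises via Jacquet-Langlands from a cuspidal newform. On the right-hand side of the formula in Theorem A, the gamma values $\Gamma(k_1+k_2+2)$ and $\Gamma(k_1-k_2+1)$ are positive integers (using $k_1 \geq k_2 \geq 0$); the rational constant $\frac{N \cdot 2^{4r_{F,2}-r_F-2}}{(2k_1+1)(2k_2+1)}$ and the Euler product $\prod_{p \mid \Delta_F}(1+p^{-1})$ are manifestly positive; and the Atkin-Lehner product $\prod_{p \mid N}(1+\e_p)$ is non-zero precisely when $\e_p = +1$ for every $p \mid N$, which is exactly the standing hypothesis of the corollary.

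The only genuine analytic input is the non-vanishing of the special value $L({\rm As}^+(f), k_1+k_2+2)$. This is extracted from the identity
\[
L(s, {\rm As}^+(\pi_f)) = \Gamma_\C(s+k_1+k_2+1)\,\Gamma_\C(s+k_1-k_2)\, L({\rm As}^+(f), s+k_1+k_2+1)
\]
displayed immediately before the corollary, specialized at $s=1$: the edge-of-strip value $L(1,{\rm As}^+(\pi_f))$ is non-zero by Shahidi's theorem (\cite[Theorem 4.3]{shahidi15}), while the two archimedean $\Gamma_\C$-factors at $s=1$ are finite and non-zero, so dividing them out yields the desired non-vanishing of the classical Asai value.

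Combining these observations, the right-hand side of Theorem A is strictly non-zero, hence so is $\pair{\theta^*_\bff}{\theta^*_\bff}_{\frakH_2}$, and therefore $\theta^*_\bff \neq 0$, completing the proof in the case $\frakN = N\cO_F$. The case $\frakN \neq N\cO_F$ is handled identically by applying the generalized version of Theorem A (mentioned in the remark immediately following it) to the stabilized newform $\bff^\dagger$ in place of $\bff$. There is no substantive obstacle: all the analytic depth is absorbed into Theorem A (the explicit Rallis inner product formula) and Shahidi's edge-of-strip non-vanishing theorem, and what remains is sign bookkeeping of the elementary factors.
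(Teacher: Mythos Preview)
Your proposal is correct and follows essentially the same approach as the paper: the corollary is deduced from Theorem~A (or its generalization Theorem~\ref{T:ClaInnPrd}) by observing that every factor on the right-hand side is non-zero under the hypothesis $\e_p=1$, with the only non-elementary input being the non-vanishing of $L(1,{\rm As}^+(\pi_f))$ via Shahidi's theorem. The paper itself records the corollary with a one-line justification citing Shahidi, and your write-up simply makes the bookkeeping explicit.
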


\begin{Remark} \begin{enumerate}\item[(i)] The necessary condition on the Atkin-Lehner eigenvalues for the non-vanishing of
Yoshida lifts already appeared in \cite{yo80}.
\item[(ii)] Our results are different from those obtained by the representation theoretic method in \cite[Theorem 8.3]{ro01}, \cite[Theorem 1.1]{takeda09} and \cite[Proposition 3.1]{saha13} for these authors prove the non-vanishing of
the space generated by some Yoshida lift, while we prove the non-vanishing of a particular Yoshida
lift with integral Fourier coefficients.
\item[(iii)] Note that a paramodular Yoshida lift of type (II) was constructed in \cite{lbrooks12}, but our Yoshida
lift is Siegel parahoric. The local component of the automorphic representation
generated by $\theta_\bff^*$ at a prime $p\divides \Delta_F$ provides an example of generic non-endoscopic supercuspidal
representations of $\GSp4(\Qp)$ possessed of a Siegel parahoic fixed vector.
\end{enumerate}
\end{Remark}
In addition to the application to the non-vanishing of Yoshida lifts, our main motivation for the explicit Petersson norm formula for Yoshida lifts of type (I) originates from the study on the congruences between Hecke eigen-systems of Yoshida lifts and stable forms on ${\rm GSp}(4)$, the so-called \emph{Yoshida congruence} as well as its application to the Bloch-Kato conjecture for special values of Asai $L$-functions. The Yoshida congruence was first investigated by the independent works \cite{bdsp12} and \cite{ak13}, where the Petersson norm formula was used to relate the congruence primes of Yoshida lifts of type (I) to special values of the Rankin-Selberg $L$-functions. More precisely, in \cite[Corollary 9.2]{bdsp12} and \cite[Theorem 6.6]{ak13}, the authors proved that if a prime $p$ divides the algebraic part of the $L$-values $L(f_1\ot f_2,k_1+k_2+2)$, then $p$ is a congruence prime for Yoshida lifts attached to a pair of elliptic newforms $(f_1,f_2)$ of weight $(2k_1+2,2k_2+2)$ under some restricted hypotheses. It is our hope that this Petersson norm formula together with our previous result on the non-vanishing of the Yoshida lift $\theta_\bff^*$ modulo a prime in \cite{hn15} serve the first step towards the understanding of Yoshida congruence in a more general setting.

This paper is organized as follows.
In \secref{secNot}, we fix the notation and definitions, and in \secref{secAsL}, we introduce the Asai $L$-functions. In \secref{secYos}, we recall the construction of Yoshida lifts in \cite{hn15}, which depends on the choice of the particular test function $\test=\ot_p\test_p$ given in \subsecref{SS:testfunctions}. In \secref{secInnprd}, we realize Yoshida lifts as theta lifts from ${\rm GO}(4)$ to $\Sp(4)$ and then apply the Rallis inner product formula in \cite{gi11} and \cite{gqt14} to reduce the Petersson norm formula to the explicit computation of certain local zeta integrals $\cI(\test_\infty)$ at the archimedean place and $\cZ(\test_p,f_p^\dagger)$ at non-archimedean places (see \propref{RallisToInt}). In \secref{LocComp}, we carry out the bulk of this paper, the explicit calculation of these local zeta integrals at all places.

\section{Notation and definitions}\label{secNot}
\subsection{Basic notation}

For a number field $F$, we denote by $\mathcal O_F$ (resp. $\Delta_F, {\mathfrak D}_{F/\Q}$ ) the ring of integers of $F$ (resp.  the discriminant of $F/\Q$, the different of $F/\Q$).
Let $\A_F$ be the ring of adeles of $F$. For an element $a$ of $\A_F$ and a place $v$ of $F$, we denote by $a_v$ the $v$-component of $a$. 

Let $\Sigma_\Q$ be the set of places of the rational number field $\Q$. We write $\A$ for $\A_\Q$. Let $\psi=\prod_{v\in\Sigma_\Q}\psi_v:\A/\Q\to \C^\times$ be the additive character 
with $\psi(x_\infty)=\exp(2\pi\sqrt{-1}x_\infty)$ for $x_\infty\in \R=\Q_\infty$.

Let $\wh\Z$ be the profinite completion of $\Z$. If $M$ is an abelian group, let $\wh M=M\ot_\Z\wh \Z$. For a place $v\in\Sigma_\Q$, $M_v=M\ot_\Z\Z_v$.

For an algebraic group $G$ over $\Q$, let $Z_G$ be the center of $G$ and 
let $[G]$ be the quotient space $G(\Q)\backslash G(\A)$.

For a set $S$, denote by ${\mathbb I}_S$ the characteristic function of $S$ and by $\# S$ the cardinality of $S$.

\subsection{Algebraic representations of $\GL(2)$}\label{algrep}
Let $A$ be an $\Z$-algebra.
Let $A[X,Y]_n$ denote the space if two variable homogeneous polynomial of degree $n$ over $A$.
Suppose $n!$ is invertible in $A$.
We define the perfect pairing $\langle\cdot ,\cdot \rangle_n:A[X,Y]_n\times A[X,Y]_n\to A$ by
\begin{align*}
  \langle X^iY^{n-i}, X^jY^{n-j} \rangle_n
 =\begin{cases}  (-1)^i\binom{n}{i}^{-1},  & {\rm if }\ i+j=n, \\
                       0, & {\rm if }\ i+j\neq n, \end{cases} 
\end{align*}
where $\binom{a}{b}$ is the binomial coefficient defined by
\begin{align*}
   \binom{a}{b} = \frac{\Gamma(a+1)}{\Gamma(a-b+1)\Gamma(b+1)} \quad(a,b\in\Z).
\end{align*}
For each polynomial $P\in A[X,Y]_n$ and each $g\in {\rm GL}_2(A)$, define the polynomial $g\cdot P$ to be
\begin{align*}
 (g\cdot P)(X,Y) = P((X,Y)g).   
\end{align*}
Then, it is well-known that the pairing $\langle\cdot,\cdot\rangle_n$ on $A[X,Y]_n$ satisfies
\begin{align*}
     \langle g\cdot P, g\cdot Q \rangle_n
  = (\det g)^{n} \cdot \langle P,Q \rangle_n \quad (P,Q\in A[X,Y]_n, g\in {\rm GL}_2(A)).
\end{align*}

For $\kappa = (n+b,b)\in \Z^2$ with $n\in \Z_{\geq 0}$,
let ${\mathcal L}_\kappa(A):=A[X,Y]_n$ 
and let $\rho_\kappa:{\rm GL}_2(A) \to {\rm Aut}_A{\mathcal L}_\kappa(A)$ be the representation given by
\begin{align*}
  \rho_\kappa(g) P(X,Y) = P((X,Y)g) \cdot (\det g)^b.  
\end{align*}
Then $(\rho_\kappa,{\mathcal L}_\kappa(A))$ is the algebraic representation  of ${\rm GL}_2(A)$
with the highest weight $\kappa$.
For each non-negative integer $k$, we put
\begin{align*}
  (\tau_k,{\mathcal W}_k(A)) := ( \rho_{(k,-k)},A[X,Y]_{2k}). 
\end{align*}
Then $({\mathcal W}_k(A), \tau_k)$ is the algebraic representation of ${\rm PGL}_2(A) = {\rm GL}_2(A)/A^\times$.

\subsection{Representations of $\GL(2)$ over local fields}
If $F$ is a local field, let $\Abs$ be the standard absolute value of $F$. Denote by $\pi(\mu,\nu)$ the principal series representation of $\GL_2(F)$ with characters $\mu,\nu:F^\times \to \C^\times$ such that $\mu\nu^{-1}\not =\Abs^\pm$ and by ${\rm St}\ot(\chi\circ\det)$ the special representation of ${\rm GL}_2(F)$ attached to a character $\chi:F^\x\to\C^\x$. 

The $L$-functions in this paper are always referred to the \emph{complete} $L$-function. In particular, the Riemann zeta function $\zeta(s)$ is given by \[\zeta(s)=\prod_v\zeta_v(s),\]
where $\zeta_\infty(s)=\pi^{-s/2}\Gamma(s/2)$ and $\zeta_p(s)=(1-p^{-s})^{-1}$. 
For later use, we recall the $\Gamma$-factors $\Gamma_\R(s)$ and $\Gamma_\C(s)$
which are defined as follows:
\begin{align*}
\Gamma_\R(s)= \pi^{-s/2}\Gamma(s/2), \quad  \Gamma_\C(s)= 2(2\pi)^{-s}\Gamma(s).
\end{align*}

\subsection{Siegel modular forms of genus two}\label{secSCF}
Let ${\rm GSp}_{4}$ be the algebraic group defined by
\begin{align*}
    {\rm GSp}_4 
 = \left\{  g\in {\rm GL}_4 :  
              g\begin{pmatrix} 0 & \bfone_2 \\ -\bfone_2 & 0 \end{pmatrix} {}^{\rm t} g 
              = \nu(g) \begin{pmatrix} 0 & \bfone_2 \\ -\bfone_2 & 0 \end{pmatrix}  
                 \right\}  
\end{align*}
with the similitude character $\nu: {\rm GSp}_4\to {\mathbb G}_m$.
For a positive integer $N$, define 
\begin{align*}
 \Gamma^{(2)}_0(N) = \left\{  \begin{pmatrix} A & B \\ C & D \end{pmatrix} \in {\rm GSp}_{4}(\widehat{\Z}) : 
                                  C \equiv 0 \text{ mod } N   \right\}.  
\end{align*}
Define the automorphy factor $J:{\rm GSp}_4(\R)^+\times{\mathfrak H}_2 \to {\rm GL}_2(\C)$ by
\begin{align*}
  J(g,Z) = CZ+D \quad (g\in \begin{pmatrix} A & B \\ C & D \end{pmatrix}). 
\end{align*}
Let ${\mathbf i}:=\sqrt{-1}\cdot I_2$. 
Let $\eta$ be a quadratic Hecke character of $\A^\x$.  A holomorphic Siegel cusp form $F: {\rm GSp}_4({\mathbf A}) \to {\mathcal L}_\kappa(\C)$ 
is said to be of weight $\kappa$, level $\Gamma^{(2)}_0(N)$ and type $\eta$ with the trivial central character if for every $g\in\GSp_4(\A)$, we have
          \begin{align*}
           F(z\gamma g u_\infty u_f) &= \rho_\kappa(J(u_\infty, {\mathbf i})^{-1})  \eta({\rm det}(D)) F(g),\\
           &(\gamma\in\GSp_4(\Q),u_\infty\in {\rm U}_2(\R),\,u_f=\pMX{A}{B}{C}{D}\in \Gamma^{(2)}_0(N)).
          \end{align*}    

\section{Asai $L$-functions}\label{secAsL}
\subsection{Local Asai transfer}If $k$ is a local field, denote by $W_k'$ the Weil-Deligne group of $k$ (\cf\cite[(4.1.1)]{tate77Corvallis}) and by $\cA(\GL_2(k))$ the set of isomorphism classes of admissible irreducible representations of $\GL_2(k)$. If $\pi\in\cA(\GL_2(k))$, denote by $\varphi_{\pi}:W_k'\to\GL_2(\C)$ a Langlands parameter of $\pi$ under the local Langlands correspondence. Consider the semi-direct product $\cG:=(\GL_2(\C)\x\GL_2(\C))\semidirect \Z/2\Z$, where $\Z/2\Z$ acts by permuting the two factors of $\GL_2(\C)\x\GL_2(\C)$.
Let $F$ be an quadratic \etale extension of $k$. Let $\pi$ be an irreducible representation of $\GL_2(F)$. Recall that a Langlands parameter $\wtd\varphi_\pi:W_k'\to\cG$ attached to the automorphidc induction of $\pi$ is defined as follows:
If $F=k\oplus k$, then $\pi=\pi_1\oplus \pi_2$ with $\pi_i\in\cA(\GL_2(k))$ and define 
$\wtd\varphi_\pi(\sg)=(\varphi_{\pi_1}(\sg),\varphi_{\pi_2}(\sg),0)$. If $F$ is a field, then $\pi\in\cA(\GL_2(F))$, and fixing a decomposition $W_k'=W_F'\disjoint W_F'c$, define 
$\wtd\varphi_\pi(\sg)=(\varphi_\pi(\sg),\varphi_\pi(c^{-1}\sg c),0)$ if $\sg\in W_F'$ and $\wtd\varphi_\pi(\sg )= (\varphi_\pi(\sg c),\varphi_\pi(c^{-1}\sg),1)$ for $\sg\in W_F' c$. Let $r^\pm:\cG\to \GL(\C^2\ot \C^2)$be the four dimensional representations given by 
\[r^\pm(g_1,g_2,0) (v\ot w)=g_1v\ot g_2w;\quad r^\pm(\bfone_2,\bfone_2,1)(v\ot w)= \pm w\ot v, \]
for each $v, w \in \C^2$. 
Then the local Asai transfer ${\rm As}^\pm(\pi)$ is defined to be the irreducible representation of $\GL_4(k)$ corresponding to the Weil-Deligne representation $r^\pm\circ\wtd\varphi_\pi$ under the local Langlands correspondence (\cite[\S 2]{kris12}). 
\subsection{Asai $L$-functions} Let $F/\Q$ be an \etale quadratic extension. Let $\pi$ be a unitary cuspidal automorphic representation of $\GL_2(\A_F)$ and factorize it into the restricted tensor product $\pi=\ot_v\pi_v$, where $v$ runs over all places of $\Q$ and $\pi_v$ is an irreducible admissible representation of $\GL_2(F_v)$. Define ${\rm As}^\pm(\pi):=\ot_v{\rm As}^\pm(\pi_v)$, which is known to be an isobaric automorphic representation of $\GL_4(\A)$ (\cite[Theorem 6.7]{kris03}). Note that by definition, we have ${\rm As}^-(\pi)={\rm As}^+(\pi)\ot\tau_{F/\Q}$, where $\tau_{F/\Q}$ is the quadratic character corresponding to $F/\Q$. Let $L(s,{\rm As}^\pm(\pi_v))$ be the local $L$-function attached the Weil-Deligne representation $r^\pm\circ\wtd\varphi_{\pi_v}$ (\cite[(4.1.6)]{tate77Corvallis}) and let $L(s,{\rm As}^\pm(\pi))=\prod_v L(s,{\rm As}^\pm(\pi_v))$ be the automorphic $L$-function of ${\rm As}^\pm(\pi)$. For the convenience in the later application, we give the complete list of local $L$-functions $L(s,{\rm As}^+(\pi_v))$ if $v=\infty$ and if $\pi_v$ is either a unramified principal series or a special representation.

\begin{defn}\label{D:AsaiL}
\begin{mylist}
\item If $v=w\bar w$ is split in $F$, and $\pi_v=\pi_w\ot \pi_{\bar w}$, then \[L(s,{\rm As}^+(\pi_v))=L(s,\pi_w\ot\pi_{\bar w})\] is the local tensor product $L$-function for $\pi_w\ot\pi_{\bar w}$ defined in \cite{gj78}.
\item If $v$ is non-split in $F$ and $\pi_v=\pi(\mu,\nu)$ is a unramified principal series with two characters $\mu,\nu:F_v^\x\to\C^\x$, then \[L(s,{\rm As}^+(\pi_v))=L(s,\mu|_{\Q_v^\x})L(s,\mu\nu)L(s,\nu|_{\Q_v^\x}).\]
\item If $v$ is non-split and $\pi_v={\rm St}\ot(\chi\circ\det)$ is the special representation twisted by a character $\chi:F_v^\x\to\C^\x$, then 
\[L(s,{\rm As}^+(\pi_v))=L(s+1,\chi|_{\Q_v^\x})L(s,\tau_{F_v/\Q_v}\chi|_{\Q_v^\x}).\]
\item For the archimedean place $v=\infty$ of $\Q$, we define  
 \[  L(s,{\rm As}^+(\pi_\infty)) = \Gamma_\C(s+k_1+k_2+1) \Gamma_\C(s+k_1-k_2). \]
\end{mylist}
\end{defn}
 In particular, if $F=\Q\oplus\Q$, then $\pi=\pi_1\oplus\pi_2$ is a direct sum of two automorphic representations of $\GL_2(\A)$ and $L(s,{\rm As}^+(\pi))=L(s,\pi_1\ot\pi_2)$. 
 \begin{defn}\label{D:GaloisSelfDual}Let $c$ be the non-trival automorphism of $F/\Q$. We say $\pi$ is \emph{Galois self-dual} if the contragradient representation $\pi^\vee$ is isomorphic to the Galois conjugate $\pi^c$. \end{defn} 
 The following theorem gives the complete description of the analytic properties of Asai $L$-functions when $\pi$ is not Galois self-dual.
\begin{thm}\label{T:AsNonvan}
The Asai $L$-function $L(s,{\rm As}^+(\pi))$ is meromorphically continued to the whole $\C$-plane with possible pole at $s=0$ or $1$. 
Furthermore, if $\pi$ is not Galois self-dual, then $L(s,{\rm As}^+(\pi))$ is entire and $L(s,{\rm As}^+(\pi))$ is non-zero for $\Re s\geq 1$ or $\Re s\leq 0$. 
\end{thm}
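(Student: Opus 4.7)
The plan is to reduce the theorem to well-known analytic properties of $L$-functions for isobaric automorphic representations of $\GL_n(\A)$, combined with Krishnamurthy's construction of the Asai transfer. First I would invoke the theorem of Krishnamurthy cited above, which asserts that $\mathrm{As}^+(\pi)$ is an isobaric automorphic representation of $\GL_4(\A)$; accordingly write
\[\mathrm{As}^+(\pi)\simeq \Pi_1\boxplus\cdots\boxplus \Pi_r,\]
where each $\Pi_i$ is a unitary cuspidal automorphic representation of $\GL_{n_i}(\A)$ with $\sum_{i=1}^{r}n_i=4$. Then
\[L(s,\mathrm{As}^+(\pi))=\prod_{i=1}^{r}L(s,\Pi_i),\]
and by Godement-Jacquet together with Jacquet-Piatetski-Shapiro-Shalika, each $L(s,\Pi_i)$ extends to a meromorphic function on $\C$ which is entire unless $\Pi_i$ is a unitary character; in that case the only possible poles lie at $s=0$ and $s=1$, and they are present precisely when $\Pi_i\simeq \mathbf{1}$. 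This yields the meromorphic continuation with poles confined to $\{0,1\}$.

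For the non-vanishing assertion, I would appeal to the classical theorem of Jacquet-Shalika applied to each cuspidal $\Pi_i$: $L(s,\Pi_i)$ is non-zero on $\Re s=1$, while absolute convergence of the Euler product yields non-vanishing on $\Re s>1$. Combining with the standard functional equation
\[L(s,\mathrm{As}^+(\pi))=\varepsilon(s,\mathrm{As}^+(\pi))\cdot L(1-s,\mathrm{As}^+(\pi^\vee)),\]
using $\mathrm{As}^+(\pi)^\vee\simeq \mathrm{As}^+(\pi^\vee)$, and applying the same argument to $\pi^\vee$, the non-vanishing is transported to the half-plane $\Re s\leq 0$.

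Finally, to obtain entireness when $\pi$ is not Galois self-dual, I must rule out any $\Pi_{i_0}\simeq \mathbf{1}$. In the split case $F=\Q\oplus\Q$, writing $\pi=\pi_1\otimes\pi_2$, one has $\mathrm{As}^+(\pi)\simeq \pi_1\boxtimes \pi_2$, and the characterization of Rankin-Selberg poles shows a pole at $s=1$ occurs if and only if $\pi_2\simeq \pi_1^\vee$, which is precisely $\pi^\vee\simeq \pi^c$, contrary to hypothesis. In the inert case where $F$ is a real quadratic field, Flicker's theorem on the pole of the Asai $L$-function forces $\pi$ to be $\GL_2(\A_\Q)$-distinguished, which via cyclic base change means $\pi\simeq \mathrm{BC}_{F/\Q}(\sigma)$ for some cuspidal $\sigma$ on $\GL_2(\A_\Q)$; combined with the triviality of $\omega_\pi|_{\A_\Q^\times}$ imposed by the distinguished condition, one extracts $\pi^\vee\simeq \pi^c$, again a contradiction. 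The principal technical obstacle lies in this final step, where the pole condition on $L(s,\mathrm{As}^+(\pi))$ must be translated accurately into the Galois self-dual condition via Flicker's theorem and cyclic base change; the remaining steps are routine applications of $\GL_n$ $L$-function theory.
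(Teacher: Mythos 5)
The paper's own proof is a one-line citation to Shahidi's Theorem~4.3, which packages the meromorphic continuation, pole location, and non-vanishing all at once via the Langlands--Shahidi method applied to the Asai $L$-function. Your route is genuinely different: you reduce to standard $\GL_n$ analytic theory through Krishnamurthy's isobaric transfer, and then translate the pole condition back into the Galois self-dual hypothesis via the split-case Rankin--Selberg pole criterion and, in the non-split case, Flicker's theorem on distinguished representations. This is a perfectly legitimate alternative and is arguably more transparent about \emph{why} non-Galois-self-duality implies entireness; it is also the approach that surfaces naturally if one thinks of the Asai transfer as the primitive object.

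However, there is a gap in the first paragraph. You assert that for a unitary character constituent $\Pi_i$, the only possible poles of $L(s,\Pi_i)$ lie at $s=0,1$. That is false in general: a unitary Hecke character on $\GL_1(\A)$ has the form $\chi_0|\cdot|^{it}$ with $\chi_0$ of finite order and $t\in\R$, and if $\chi_0=\mathbf{1}$ with $t\neq 0$ then $L(s,\Pi_i)=\zeta(s+it)$ has its pole at $s=1-it\notin\{0,1\}$. So the conclusion ``poles confined to $\{0,1\}$'' does not follow purely from the isobaric decomposition together with Godement--Jacquet; you must rule out shifted trivial characters appearing as constituents of $\mathrm{As}^+(\pi)$. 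One clean fix: use the factorization $L(s,\mathrm{As}^+(\pi))\,L(s,\mathrm{As}^-(\pi))=L(s,\pi\times\pi^c)$. The right-hand side is the Rankin--Selberg $L$-function of two unitary cuspidal representations of $\GL_2(\A_F)$, whose poles lie only at $s=0,1$; since $L(s,\mathrm{As}^-(\pi))$ is non-vanishing on $\Re s=1$ (Jacquet--Shalika applied to its isobaric constituents, which are cuspidal by the same transfer), a pole of $L(s,\mathrm{As}^+(\pi))$ at $1-it$ with $t\neq 0$ would force a pole of the Rankin--Selberg $L$-function there, a contradiction; the rest follows from absolute convergence in $\Re s>1$ and the functional equation. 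Once this is inserted your proof closes. As a minor stylistic point, in the inert case the detour through cyclic base change is unnecessary: Flicker's result already gives that distinguishedness implies $\pi^c\simeq\pi^\vee$ directly, which is exactly Galois self-duality.
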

\begin{proof}This is a special case of \cite[Theorem 4.3]{shahidi15}.
\end{proof}


\def\newform{\bff^\circ}
\section{Yoshida lifts}\label{secYos}
In this section, we recall the construction of vector-valued Yoshida lifts in \cite[\S 3]{hn15}.
\subsection{Groups}\label{secYos1}
\def\ep{\varepsilon}
Let $D_0$ be a definite quaternion algebra over $\Q$ of discriminant $N^-$ 
and let $F$ be a quadratic \etale algebra over $\Q$. 
Let $D = D_0\otimes_\Q F$. 
We assume that 
\beqcd{split}\text{every place dividing $\infty N^-$ is split in $F$.}\eeqcd 
It follows that $F$ is either $\Q\oplus \Q$ or a real quadratic field over $\Q$, and $D$ is precisely ramified at $\infty N^-$. 
Denote by $x\mapsto x^\ast$ the main involution of $D_0$ 
    and by $x\mapsto x^c$ the non-trivial automorphism of $F/\Q$, 
    which are extended to automorphisms of $D$ naturally. 
We define the four dimensional quadratic space $(V,{\rm n})$ over $\Q$ by \[
  V = \left\{ x\in D : x^* = x^c   \right\};\quad {\rm n}(x)=xx^*.\]
  Let $H^0$ be the algebraic group over $\Q$ given by 
\[  H^0 (\Q)=(D^\x \times\Q^\x)/F^\x,
\]
where $F^\x$ sits inside $D^\x\times\Q^\x$ as $(z,\rmN_{F/\Q}(z))$. Then $H^0$ acts on $V$ via $\varrho: H^0 \to \Aut V$ given by
\[   \varrho(a,\alpha)(x) = \alpha^{-1} a x (a^c)^*\quad (x\in V, (a,\alpha)\in H^0).  \label{varrho}   
\]
This induces an identification $\varrho: H^0\iso \GSO(V)$ with the similitude map given by 
\[ \nu(\varrho(a,\alpha))=\al^{-2}{\rm N}_{F/\Q}(aa^*). 
\]
For $a\in D_\A^\x$, we write $\varrho(a)=\varrho(a,1)$. 
Put
\[ H_1^0=\stt{h\in H^0\mid \nu(\varrho(h))=1}\iso \SO(V).
\]

\begin{Remark}\label{R:split}
If $v=w_1w_2$ is a place split in $F$, then $F\ot_\Q\Q_v=\Q_v e_{w_1}\oplus \Q_v e_{w_2}$, where $e_{w_1}$ and $e_{w_2}$ are idempotents corresponding to $w_1$ and $w_2$ respectively. Let $D_{0,v}=D\ot_\Q \Q_v$. For a place $w_1$ lying above $v$, in the sequel we make the identifications \beq\label{E:splittrivialization}\begin{aligned}
(D_{0,v}^\x\times D_{0,v}^\x)/\Q_v^\x\iso& H^0(\Q_v),\quad (a,d)\mapsto (ae_{w_1}+de_{w_2},{\rm n}(d));\\ 
D_{0,v}\iso &V\ot_\Q\Q_v,\quad  x\mapsto xe_{w_1}+x^*e_{w_2},\end{aligned}\eeq
where $\Q_v^\x$ sits inside $(D^\x_{0,v}\times D^\x_{0,v})$ as $(z,z)$.
For $(a,d)\in D_{0,v}^\x\xx D_{0,v}^\x$, we have $\varrho(a,d)x=axd^{-1}$ for $x\in D_{0,v}$.
\end{Remark}
\def\bfx{x}

\subsection{Notation for quaternion algebras}\label{SS:DATA}
We fix an isomorphism $\Phi_p:{\rm M}_2(\Qp)\to D_0\otimes\Qp$ for each $p\nmid N^-\infty$ once and for all 
and we put $\Phi=\prod_{p\ndivides N^-\infty}\Phi_p$. 
Let $\cO_{D_0}$ be the maximal order of $D_0$ such that $\cO_{D_0}\ot\Zp=\Phi_p({\rm M}_2(\Zp))$ 
for all $p\ndivides N^-$ and let $R^0:=\cO_{D_0}\ot_\Z\cO_F$ be a maximal order of $D$.
If $\frakA$ is an ideal of $\cO_F$ with $(\frakA,N^-)=1$, denote by $R_{\frakA}$ the standard Eichler order of $D$ of level $\frakA$ contained in $R^0$. 

For any ring $A$, the main involution $*$ on ${\rm M_2}(A)$ is given by 
\[ \pMX{a}{b}{c}{d}^*=\pMX{d}{-b}{-c}{a}.
\]

Let $\bbH$ be the Hamilton's quaternion algebra given by
\[  \bbH = \stt{ \pMX{z}{w}{-\ol{w}}{\ol{z}} \in {\rm M}_2(\C) }. 
  \] 
The main involution $\ast:{\mathbb H} \to {\mathbb H}$ is given by 
  $x\mapsto {}^{\rm t}\bar{x}$. 
Fix  an identification $\Phi_\infty: D_{0,\infty} \cong \bbH$
such that  $\Phi_\infty(x^\ast) = \Phi_\infty(x)^\ast$, 
which induces an embedding $\Phi_\infty: D^\times_{0,\infty} \cong \bbH^\times \hookto {\rm GL}_2(\C)$.

\subsection{Weil representation on ${\rm O}(V)\times\Sp(4)$} \label{secWeil}
Let $(\cdot, \cdot): V\times V \to \Q$ be the bilinear form defined by
$(x,y) = {\rm n}(x+y) - {\rm n}(x) - {\rm n}(y)$. 
Denote by ${\rm GO}(V)$ the orthogonal similitude group with the similitude morphism 
$\nu:{\rm GO}(V) \to \bbG_m$.
Let $\bfX = V\oplus V$. 
For a place $v$ of $\Q$, let $V_v=V\otimes_\Q\Q_v$ and $\bfX_v=\bfX\otimes_\Q\Q_v$. 
Denote by $\cS(\bfX_v)$ the space of $\C$-valued Bruhat-Schwartz functions on $\bfX_v$. Let $\cB_{\omega_v}:\cS(\bfX_v)\otimes \cS(\bfX_v)\to\C$ be the Hermitian pairing given by 
\begin{align*}
       {\mathcal B}_{\omega_v}(\varphi_{1,v}, \varphi_{2,v})
  = & \int_{\bfX_v}   \varphi_{1,v} (x_v) \bar{ \varphi_{2,v} (x_v)}\rmd x_v
\end{align*}
for $\varphi_{1,v}, \varphi_{2,v}\in {\mathcal S}({\mathbf X}_v)$. Here $\rmd x_v$ is the self-dual measure on $\bfX_v$ with respect to the Fourier transform determined by $\psi_v$.
Throughout, we consider the standard Schr\"{o}dinger realization of the Weil representation 
$\omega_{V_v}: {\rm Sp}_4(\bfQ_v) \to \Aut_\C(\bfX_v)$, 
which is explicitly given in \cite[Section 3.4]{hn15}.  
Let $\cR(\GO(V)\x \GSp_4)$ be the $R$-group
\[\cR(\GO(V)\x \GSp_4)=\stt{(h,g)\in \GO(V)\x\GSp_4\mid \nu(h)=\nu(g)}.\]
Then the Weil representation can be extended to the $R$-group by
\begin{align*}
\omega_v:&\cR({\rm GO}(V_v)\times {\rm GSp}_4(\Q_v))\to {\rm Aut}_\C\cS(\bfX_v),\\
    \omega_v(h,g)\varphi(x)
  = &|\nu(h)|^{-2}_v(\omega_{V_v}(g_1)\varphi)(h^{-1}x)
\quad (g_1 = \pMX{\bfone_2}{0}{0}{\nu(g)^{-1}\bfone_2 }g). 
\end{align*}
Let ${\cS}({\bfX}_{\A}) = \otimes_v'{\cS}({\bfX}_v)$ and let  
$\omega=\otimes_v\omega_v: \cR({\rm GO}(V)_{\A}\times {\rm GSp}_4({\A}))
\to {\rm Aut}_\C\cS(\bfX_{\A})$.
\subsection{Representations of $H^0(\A)$}\label{subsec:fromsonH}
We fix $\ul{k}=(k_1,k_2)$ a pair of non-negative integers with $k_1\geq k_2$ and let $\frakN^+$ be a square-free product of primes ideal of $\cO_F$ with \[(\frakN^+,N^-\Delta_F)=1.\]  When $F=\Q\oplus \Q$, $\frakN^+$ is given by a pair of square-free positive integers $(N^+_1,N^+_2)$. 
Let $N^+$ be the square-free integer such that $N^+\Z={\mathfrak N}^+\cap \Z$ 
(so $N^+={\rm l.c.m}(N_1^+,N_2^+)$ if $F=\Q\oplus \Q$). Let $\frakN=\frakN^+N^-$. Let $f^{\rm new}$ be a \emph{newform}  on $\PGL_2(\A_F)$ of weight $2\ul{k}+2=(2k_1+2,2k_2+2)$ and level $\frakN$. Namely, $f^{\rm new}$ is a pair of elliptic modular newforms $(f_1,f_2)$ of level $(\Gamma_0(N_1^+N^-)$, $\Gamma_0(N_2^+N^-))$ and weight $(2k_1+2, 2k_2+2)$ if $F=\Q\oplus \Q$, while $f^{\rm new}$ is a Hilbert modular newform of level $\Gamma_0(\frakN)$ and weight $2\ul{k}+2$ if $F$ is a real quadratic field. Let $\pi$ be the cuspidal automorphic representation of $\PGL_2(\A_F)$ generated by the newform $f^{\rm new}$.

Let $(\tau_{\ul{k}},\cW_\ulk(\C)):=(\tau_{k_1}\otimes\tau_{k_2},{\mathcal W}_{k_1}(\C)\otimes_\C {\mathcal W}_{k_2}(\C))$ be an algebraic representation of $D^\x$ via $\Phi_\infty$ 
and let $\pairing_\cW$ be the pairing on $\cW_{\ul{k}}(\C)$ given by 
$\pair{v_1\ot v_2}{v'_1\ot v'_2}_{\cW}=\pair{v_1}{v'_1}_{2k_1}\pair{v_2}{v'_2}_{2k_2}$, 
where $\langle\cdot,\cdot\rangle_{2k_i} (i=1,2)$ is the pairing introduced in Section \ref{algrep}. 
Let $D_\A=D\ot_\Q\A$. For an ideal $\frakA$ of $\cO_F$ with $(\frakA,N^-)=1$, denote by ${\mathcal M}_{\ul{k}}(D^\times_{\mathbf A},\frakA)$ the space of $\cW_\ulk(\C)$-valued modular forms on $D_\A^\x$, consisting of functions
$\bff:D^\times_{\mathbf A}\to {\mathcal W}_{\ul{k}}(\C)$
such that
\begin{align*}
 &\bff(z\gamma h u) = \tau_{\ul{k}}(h^{-1}_\infty) \bff(h_f),\\
 (h=&(h_\infty,h_f)\in D^\times_{\mathbf A},\, (z,\gamma, u)\in F^\times_{\mathbf A}\times D^\times \times \wh R_{\frakA}^\x).    
\end{align*}
Hereafter, we shall view $\bff$ as a $\cW_\ulk(\C)$-valued function on $Z_{H^0}(\A)\bksl H^0(\A)$ by the rule $\bff(a,\alpha):=\bff(a)$. For $u\in\cW_\ulk(\C)$, let $\bff_u(h):=\pair{\bff(h)}{u}_\cW$. Then $\bff_u$ is an automorphic form on $H^0(\A)$.

Let $\pi^D$ be the irreducible automorphic representation of $D_\A^\x$ obtained via the Jacquet-Langlands transfer of $\pi$ and let $\cA_{\pi^D}$ be the corresponding space of $\pi^D$.  Then we have an identification $i:\cM_\ulk(D_\A^\x,\frakA)\iso \bigoplus_\pi\Hom_{D_\infty^\x}(\cW_\ulk(\C),\cA_{\pi^D}^{\wh R_\frakA^\x})$ given by $i(\bff)(u)=\bff_u$. In addition, to the newform $f^{\rm new}$, we can associate a $\cW_\ulk(\C)$-valued modular form on $\newform\in \cM_\ulk(D^\x_\A,\frakN^+)$, which is characterized by the property that $\newform$ shares the same Hecke eigenvalues of $f^{\rm new}$ at primes not dividing $\frakN$. Moreover, $\newform$ is unique up to a scalar by strong multiplicity one for $\GL(2)$ and local theory of newforms. 

We recall the local Atkin-Lehner involutions. If $p\ndivides N^-$, then $H^0(\Q_\q)=(\GL_2(F_\q)\times \Q_\q^\x)/F_\q^\x$, and for each prime $\frakp$ of $\cO_F$ lying above $p$, let $\uf_\frakp$ be a uniformizer of $\frakp$ and put
\beq\label{E:Atkin1}\eta_\frakp:=(\pMX{0}{1}{-\uf_\frakp}{0},1)\in H^0(\Q_\q).\eeq
If $p\divides N^-$, then $D_{0,\q}$ is the division algebra over $\Q_\q$ and $p=\frakp\frakp^c$ is split in $F$. 
Let $\uf^D_\frakp\in D_\q^\x$ such that ${\rm n}(\uf^D_\frakp)\in F_\frakp^\x$ is a uniformizer of $\frakp$. Put
\beq\label{E:Atkin2}\eta_\frakp:=(\uf^D_\frakp,1),\quad \eta_{\frakp^c}=(\uf^D_{\frakp^c},1)\in H^0(\Q_\q).\eeq
It is well known that if $\frakp\divides\frakN^+$, then $\newform$ is an eigenfunction of the right translation of $\eta_\frakp$ (Atkin-Lehner involution at $\frakp$). In other words, we have
\beq\label{E:Atkineigenvalue}\newform(h\eta_\frakp)=\e_\frakp\cdot\newform(h)\text{ for }\frakp\divides \frakN^+.\eeq
We call $\e_\frakp\in\stt{\pm 1}$ the Atkin-Lehner eigenvalue of $\newform$ at $\frakp$. Moreover, if we denote by $\ep(\pi_\frakp)$ the local root number of the local component $\pi_\frakp$ of $\pi$ at $\frakp$, then $\e_\frakp=\ep(\pi_\frakp)$ for $\frakp \ndivides N^+$ and $\e_\frakp=-\ep(\pi_\frakp)$ for $\frakp\divides N^-$.

We next introduce the modular form $\bff^\dag$ obtained by applying certain level-raising operators to the newform $\newform$. Let $\cP$ be a finite subset of finite places of $\Q$ given by 
\beq\label{E:badprime} \begin{aligned} \cP =&\stt{\text{rational primes  }p\mid p=\frakp\frakp^c\text{ is split in }F\text{ with } \frakp\ndivides \frakN^+\text{ and }\frakp^c\divides\frakN^+}\\
=&\stt{\text{prime factors $p$ of }N^+\mid p\ndivides \frakN^+}.\end{aligned}
\eeq
Define the level raising operator $\sV_\q:\cM_\ulk(D_\A^\x,\frakN^+)\to\cM_\ulk(D_\A^\x,\frakN^+\frakp)$ for each $p\in \cP$ by 
\begin{align*}
   \sV_\q(\bff)(h) = \bff (h) +\ep_{\frakp^c}\cdot \bff (h\eta_\frakp).
\end{align*}
Let $\bff^\dag\in\cM_\ulk(D^\x_\A,N^+\cO_F)$ be the \emph{$\cP$-stabilized newform} defined by 
\beq\label{E:levelraisingf}\bff^\dag=\sV_\cP(\newform),\quad\sV_\cP:=\prod_{\q\in\cP}\sV_\q.\eeq
By definition, $\bff^\dag=\newform$ is the newform if $\frakN^+=N^+\cO_F(\iff \cP=\emptyset)$.

\subsection{The test function $\test$}\label{SS:testfunctions}
 We recall a distinguished Bruhat-Schwartz function 
$\test 
    \in\cS(\bfX_\A)\ot \cW_{\underline{k}}(\C)\otimes \cL_\kappa(\C)$ introduced in \cite[Section 3.6]{hn15}.    
Let \[R:=R_{N^+\cO_F}\] be the standard Eichler order of level $N^+\cO_F$ and let
\[L:=R\cap V\]be the lattice of $V$ determined by $R$. At each finite place $p$, $L_p=L\ot_\Z\Zp$ and define $\test_p\in \cS(\bfX_p)$ by 
\beq\label{finitetest}
  \test_p =   {\mathbb I}_{L_p\oplus L_p} \text{ the characteristic function of $L_p\oplus L_p$. }  
\eeq
Note that $\test_p$ is invariant by $R_p^\x\x \Zp^\x$ under the Weil representation $\om_p$. At the archimedean place $\infty$, 
we have identified $H^0(\R)$ with $(\bbH^\x\times\bbH^\x)/\R^\x$   
and $V_\infty$ with $\bbH$ in (\ref{E:splittrivialization}) with respect the inclusion $F\hookto\R$. For $0\leq\al\leq 2k_2$, define the function $P_\ulk^\al: \bfX_\infty=\bbH^{\oplus 2} 
  \to \cW_\ulk(\C)=\C[X_1,Y_1]_{2k_1} \ot_\C \C[X_2,Y_2]_{2k_2}$ by
\beq\label{E:vpalpha.def}\begin{aligned}
      &P_\ulk^\al( \begin{pmatrix} z_1 & w_1 \\ -\bar{w}_1 & \bar{z}_1 \end{pmatrix} , \begin{pmatrix} z_2 & w_2 \\ -\bar{w}_2 & \bar{z}_2 \end{pmatrix}  )\\
  = 
 &( ( z_1\bar{z}_2+w_1\bar{w}_2-\bar{w}_1w_2-\bar{z}_1z_2 ) X_1Y_1 + (z_1w_2-w_1z_2 ) X^2_1 + (\bar{z}_1\bar{w}_2-\bar{z}_2\bar{w}_1 )Y^2_1 )^{k_1-k_2}   \\
     &\times  (\bar{z}_1Y_1\otimes X_2 + w_1 X_1\otimes X_2 -  z_1 X_1\otimes Y_2 + \bar{w}_1 Y_1\otimes Y_2)^\alpha  \\
     &\times (\bar{z}_2Y_1\otimes X_2 + w_2 X_1\otimes X_2 -  z_2 X_1\otimes Y_2 + \bar{w}_2 Y_1\otimes Y_2)^{2k_2-\alpha}.  
\end{aligned}\eeq
Then the archimedean Bruhat-Schwartz function 
$\test_\infty : \bfX_\infty = \bbH^{\oplus 2} 
    \to \C[X_1, Y_1]_{2k_1} \ot_\C \C[X_2,Y_2]_{2k_2} \ot_\C \C[X,Y]_{2k_2}$ is defined by  
\beq\label{infinitetest}
  \test_\infty(x) =e^{-2\pi({\rm n}(x_1)+{\rm n}(x_2))}\sum_{\al=0}^{2k_2}P^\al(x_1,x_2)\cdot \binom{2k_2}{\al}X^\al Y^{2k_2-\al}
  \quad (x=(x_1,x_2)\in\bfX_\infty).
\eeq
We note that the following identity holds:
\begin{align*}
   (\omega_\infty(h,u)\test_\infty)(x) = \tau_{\ul{k}}(h^{-1})\ot\rho_\kappa({}^{\rm t}u)(\test_\infty(x)).  
\end{align*}
for each $(h, u) \in H^0_1({\R})\times {\rm U}_2(\R)$
by \cite[Lemma 3.5]{hn15}.


\subsection{Theta lifts from $\GSO(V)$ to $\GSp_4$}

Let $\kappa:=(k_1+k_2+2, k_1-k_2+2)$. 
For each vector-valued Bruhat-Schwartz function 
$\varphi\in\cS(\bfX_{\A})\otimes \cW_{\ul{k}}(\C)\otimes \cL_\kappa(\C)$, 
define the theta kernel 
$\theta(-,-;\varphi): \cR({\rm GO}(V)_{\A}\times {\rm GSp}_4({\A}) ) 
   \to \cW_{\ul{k}}(\C)\ot \cL_\kappa(\C)$ by
\[
  \theta(h,g;\varphi) = \sum_{x\in\bfX} \omega(h,g)\varphi(x).
\]
Let $\GSp^+_4$ be the group of elements $g\in\GSp_4$ with $\nu(g)\in \nu(\GO(V))$. Let $U_R=\prod_v U_{R_v}$ be the open-compact subgroup of $H^0(\A)$ given by \beq\label{E:opcpt}\begin{aligned}U_{R_v}=&H^0(\R)\text{ if }v=\infty;
   \quad U_{R_v}=(R_v^\x \times {\mathbf Z}_v^\x)/\cO_{F_v}^\x\text{ if }v < \infty,\\
\cU:=&H^0_1(\A)\cap U_R. \end{aligned}
\eeq 
For a vector-valued function $\bff:H^0(\Q)\bksl H^0(\A)\to \cW_\ulk(\C)$, define the theta lift $\theta(\bff ,\varphi): \GSp_4^+(\Q)\bksl {\rm GSp}_4^+({\A})\to \cL_\kappa(\C)$ by 
\begin{align*}
  \theta(\varphi,\bff)(g) 
  = \int_{[H_1^0]} \pair{\theta( hh^\prime, g;\varphi)}{ \bff(hh^\prime)}_{\cW}{\rm d}h
  \quad (\nu(h^\prime) = \nu(g)).
\end{align*}
Here $\rmd h:=\prod_v\rmd h_v$ is the Tamagawa measure on $H_1^0(\A)$.
We extend uniquely $\theta(\varphi,\bff)$ to a function on $\GSp_4(\Q)\bksl \GSp_4(\A)$ 
by defining $\theta(\varphi,\bff)(g)=0$ for $g\not\in\GSp_4(\Q)\GSp_4^+(\A)$.
\begin{defn}The Yoshida lift is the theta lift $\theta(\test,\bff^\dag)$ attached to the \BS function $\test:=\bigot_v\test_v\in\cS(\bfX_\A)\ot \cW_{\underline{k}}(\C)\otimes \cL_\kappa(\C)$ and the $\cP$-stabilized newform $\bff^\dag$ attached to the cuspidal automorphic representation $\pi$ of $\PGL_2(\A_F)$. When $k_2=0$, $\theta(\test,\bff^\dag)$ is the scalar valued Siegel modular form constructed by Yoshida \cite{yo80}.\end{defn}

\begin{prop}
Let $N_F=N^+N^-\Delta_F$. The Yoshida lift $\theta(\test,\bff^\dag)$  is a Siegel modular form of weight $\kappa$, level $\Gamma^{(2)}_0(N_F)$ 
and of type $\eta_{F/\Q}$ with the trivial central character. Moreover, $\theta(\test,\bff^\dag)$ is a cusp form if $\pi$ is not Galois self-dual. 
\end{prop}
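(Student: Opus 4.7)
The plan is to verify the four structural properties of $\theta(\test,\bff^\dag)$ (automorphy at infinity, level, central character, holomorphy) directly from the construction of the theta kernel together with the specific equivariance properties of $\test_v$ at each place, and then to establish cuspidality using the Rallis tower property combined with Theorem \ref{T:AsNonvan}.

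First, for the weight and holomorphy, I would invoke the archimedean equivariance
\[
(\omega_\infty(h,u)\test_\infty)(x) = \tau_{\ul k}(h^{-1})\otimes \rho_\kappa({}^{\mathrm t}u)(\test_\infty(x)),\quad (h,u)\in H^0_1(\R)\times\mathrm U_2(\R),
\]
recalled from \cite[Lemma 3.5]{hn15}. Pairing with $\bff^\dag$ via $\pairing_\cW$ eliminates the $\tau_{\ul k}(h^{-1})$ factor against the automorphy of $\bff^\dag$, and leaves exactly the $\rho_\kappa({}^{\mathrm t}u)$-equivariance demanded in Section \ref{secSCF}. Holomorphy of $Z\mapsto (\theta(\test,\bff^\dag)J(g_\infty,\mathbf i))|_{g_\infty\cdot\mathbf i=Z}$ follows because the Gaussian $e^{-2\pi({\rm n}(x_1)+{\rm n}(x_2))}$ together with the polynomial $P_{\underline k}^\alpha$ forms a vector in the lowest $K_\infty$-type of the holomorphic discrete series; the classical computation producing the holomorphic Fourier expansion is the one carried out in the proof of \cite[Proposition 5.1]{hn15}.

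For the level and the type character, I would analyze the finite-place equivariance of $\test_p=\mathbb I_{L_p\oplus L_p}$. At an unramified prime $p\nmid N_F$, $\test_p$ is the Gaussian for the self-dual lattice $L_p$ and is invariant under $\omega_p(U_{R_p}\times \GSp_4(\Zp)_{\nu(g)=\nu(h)})$; at primes $p\mid N^+N^-$, one uses that $R_p$ is the Eichler (resp.\ maximal) order and the standard Weil-representation calculation shows $\test_p$ is right $\Gamma_0^{(2)}(N)_p$-invariant. The interesting place is $p\mid \Delta_F$, where $F_p/\Qp$ is ramified quadratic: here the Weil representation of $\Sp_4(\Qp)$ on $\cS(\bfX_p)=\cS((D_{0,p}\otimes F_p)^{\oplus 2})$ acts through the quadratic character $\eta_{F_p/\Qp}$ of $\det$ (via the splitting discrepancy determined by the discriminant of $V_p$). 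Combining these local statements shows that $\theta(\test,\bff^\dag)$ is right-invariant by $\Gamma_0^{(2)}(N_F)$ up to the character $\eta_{F/\Q}\circ\det D$, and that the center acts trivially because $\bff^\dag$ has trivial central character and $\test$ is quasi-invariant under $\wh\Z^\x$. Extending by zero outside $\GSp_4(\Q)\GSp_4^+(\A)$ is consistent with $\Sp_4(\A)\subset\GSp_4^+(\A)$ and with the transformation law under $\GSp_4(\Q)$.

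For cuspidality, the hard part and the only non-formal step, I would use the Rallis tower for the dual pair $(\mathrm O(V),\Sp_4)$: if $\theta(\test,\bff^\dag)$ were non-cuspidal, then by the tower property the theta lift of $\bff^\dag$ from $\mathrm O(V)$ to the smaller symplectic group $\Sp_2=\SL_2$ would be non-zero. On $\SL_2$ this first-occurrence lift, via the see-saw for the inclusion $\mathrm{SO}(V)\supset \mathrm{SO}(V')$ with $V'\subset V$ a suitable $2$-dimensional subspace, computes an Asai-type period whose non-vanishing is controlled by the residue (or special value at $s=1$) of $L(s,\mathrm{As}^+(\pi))$. By Theorem \ref{T:AsNonvan}, the assumption that $\pi$ is not Galois self-dual forces $L(s,\mathrm{As}^+(\pi))$ to be entire, hence the would-be pole at $s=1$ is absent and the theta lift to $\SL_2$ vanishes; by the tower property $\theta(\test,\bff^\dag)$ is then cuspidal. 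The delicate issue I expect to meet here is the exact matching of the Weil-representation data between the $\mathrm O(V)\times\Sp_4$ and the seesaw-dual $\mathrm O(V')\times\Sp_2$ setups so that the vanishing of the period is genuinely controlled by $L(1,\mathrm{As}^+(\pi))$ and no stray obstruction (e.g.\ a local root-number obstruction at a place in $\cP$ or at $p\mid\Delta_F$) intervenes; this matching is essentially the content of \cite{gi11,gqt14} used later in Section \ref{secInnprd}, so I would simply quote it.
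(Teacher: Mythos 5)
The paper's ``proof'' of this proposition is a single citation to \cite[\S 3.7, Lemma 3.2, 3.3 and 3.4]{hn15}, so there is no argument here to compare against line by line. Your sketch is a plausible and essentially correct reconstruction of what those cited lemmas do: the archimedean equivariance formula and the local invariance of $\test_p$ under $\Gamma_0^{(2)}(N_F)_p$ (with the $\eta_{F/\Q}$-twist appearing precisely at primes dividing $\Delta_F$, because $\disc V_p$ then generates $F_p$) account for the weight, level, type, and trivial central character, and holomorphy is the classical lowest-$K$-type computation underlying \cite[Proposition~5.1]{hn15}.

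Where you do take a genuinely different route is the cuspidality step. You argue via the Rallis tower property combined with Theorem~\ref{T:AsNonvan}: non-cuspidality of the lift to $\Sp_4$ would force a nonzero lift to $\Sp_2$, and the Rallis inner product formula at the $\Sp_2$ level is governed by the residue of $L(s,\mathrm{As}^+(\pi))$ at $s=1$, which vanishes since the Asai $L$-function is entire when $\pi\not\cong\pi^c$. This is logically sound (the direction is ``entireness $\Rightarrow$ vanishing residue $\Rightarrow$ vanishing norm $\Rightarrow$ vanishing lift,'' by contrapositive of the inner product formula), but it invokes analytic machinery --- Shahidi's non-vanishing theorem and the low-rank Rallis formula --- that the cited \cite[Lemma~3.4]{hn15}, following Yoshida's original method, almost certainly avoids. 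The more elementary argument (and the one you would find in \cite{hn15}) computes the Siegel $\Phi$-operator (or equivalently the degree-one Fourier–Jacobi coefficient) of $\theta(\test,\bff^\dag)$ directly and shows it is proportional to a $\mathrm{SO}(V')$-period of $\bff^\dag$ for a two-dimensional subspace $V'\subset V$, which vanishes exactly when $\sigma\not\cong\sigma^c$, i.e.\ when $\pi\not\cong\pi^c$. Both arguments prove the statement, but the direct computation is preferable here because Theorem~\ref{T:AsNonvan} is itself only introduced in this paper as an input for the later Rallis inner product formula; importing it into the cuspidality proof is an unnecessary dependence on heavier analytic input.

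One small caution: you write ``$\test$ is quasi-invariant under $\wh\Z^\x$'' as the reason the central character of the Yoshida lift is trivial. The cleaner statement is that the central $\Z_G(\A)$-action on the theta kernel is absorbed into the similitude action of $H^0(\A)$ (via the matching $\nu(h)=\nu(g)$), and triviality of the central character of $\bff^\dag$ (i.e.\ $\bff^\dag$ factors through $Z_{H^0}(\A)$) then forces trivial central character on the lift; the lattice invariance of $\test_p$ is used for the level, not the center.
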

\begin{proof}This follows directly from \cite[\S 3.7, Lemma 3.2, 3.3 and 3.4]{hn15}.\end{proof}

\def\bfmu{\boldsymbol{\mu}}
\def\piD{\pi^D}
\def\piHo{\sigma}
\section{Rallis Inner product formula of Yoshida lifts }\label{secInnprd}

In this section, we realize Yoshida lifts as theta lifts from $\GO(V)$ to $\GSp_4$ and apply the Rallis inner product formula to calculate the inner product of the Yoshida lift $\theta(\test, \bff^\dag)$. 
 
 \subsection{Automorphic forms on $\GO(V)$}\label{cuspGO}
In this subsection, we will retain the notation in \subsecref{subsec:fromsonH}. Let $H={\rm GO}(V)$. Let $\bft$ be the order two element in $H(\Q)$ with the action $x\mapsto x^*,\,x\in V$. 
Let $\bfmu_2=\stt{1,\bft}$ and we regard $\bfmu_2$ as the multiplicative group scheme of order $2$ 
 defined over $\Q$. 
 For each $v\in\Sigma_\Q$, let $\bft_v$ be the image of $\bft$ in $H(\Q_v)$. If $\cR$ is a subset of $\Sigma_\Q$, denote by $\bft_\cR\in\bfmu_2(\A)$ the element such that $(\bft_\cR)_v=\bft_v$ for $v\in\cR$ and $(\bft_\cR)_v=1$ if $v\not\in\cR$. Then we have $H(\A)=H^0(\A)\bfmu_2(\A)$. For $h=\varrho(a,\alpha)\in H^0(\A)=(D_\A^\x\times\A^\x)/\A_F^\x$, 
put $h^c=\varrho(a^c,\alpha)$. One verifies easily that $\bft h \bft=h^c$. 
\subsubsection{From $\GSO(V)$ to $\GO(V)$}
Recall that $\piD$ is the Jacquet-Langlands transfer of $\pi$. We have $(\piD,\cA_{\piD})\iso\ot_v(\piD_v,\cV_v)$, where $\piD_v$ is an irreducible admissible representation of $D_v^\x$ on the space $\cV_v$. Let $\piHo=\piD\boxtimes\bfone$ be an automorphic representation of $H^0(\A)$ with the space $\cA_{\piHo}=\cA_{\piD}$. Here $\bfone$ is the trivial representation of $\A^\x$. We have $\piHo\iso\ot_v\piHo_v$, where $\piHo_v=\piD_v\boxtimes \bfone$ with the same space $\cV_v$.  Let $R=R_{\frakN^+}$ is the Eichler order of $D$ of level $\frakN^+$. 
If $v$ is a finite place of $\Q$, viewing $R_v^\x=(R\ot\Z_v)^\x$ as a subgroup of $H^0(\Z_v)$, the $R^\x_v$-invariant subspace of $\cV_v$ is one-dimensional by the theory of newforms of irreducible representations of $D_v^\x$. We fix a non-zero vector $f^0_v$ in $\cV_v^{R^\x_v}$.

Let $\piHo^\sharp_v:=\Ind_{H^0(\Q_v)}^{H(\Q_v)}\piHo_v$ be the induced representation of $H(\Q_v)$. Namely, the space of $\piHo^\sharp_v$ is $\cV_v^\sharp:=\cV_v\oplus\cV_v$, on which $h\in H^0(\Q_v)$ acts by $\piHo^\sharp_v(h)(x,y)=(\piHo_v(h)x,\piHo_v(h^c)y)$ and $\piHo^\sharp_v(\bft_v)(x,y)=(y,x)$ for all $x,y\in\cV_v$. We define the sub-representation $\wtd\piHo_v\subset \piHo^\sharp_v$ of $H(\Q_v)$ with the space $\wtd\cV_v\subset \cV_v^\sharp$ as follows. Let
\[\frakS=\stt{v\in\Sigma_\Q\mid \piHo_v\iso\piHo^c_v}.\]
\begin{mylist}\item $v\not\in\frakS$: in this case, $\piHo^\sharp_v$ is irreducible, and we set $(\wtd\piHo_v,\wtd\cV_v):=(\piHo_v^\sharp,\cV_v^\sharp)$.
\item $v\in\frakS$: in this case, there exist two linear maps $\xi^\pm:\cV_v\to \cV_v$ such that $\xi^\pm\circ \piHo_v(h)=\piHo_v(h^c)\circ\xi^\pm$ for all $h\in H^0(\Q_v)$, $(\xi^\pm)^2={\rm Id}$ and $\xi^+=(-1)\cdot \xi^-$. If $v$ is finite, let $\xi^+$ be chosen so that $\xi^+(f_v^0)=f_v^0$ (this is possible as $R_v^c=R_v$ for $v\in\frakS$), and if $v$ is archimedean, then $\cV_\infty=\cW_{k_1}(\C)\ot\cW_{k_2}(\C)$ with $k_1=k_2$ since $\piHo_\infty\iso\piHo^c_\infty$, let $\xi^+$ be the map $u_1\ot u_2\mapsto u_2\ot u_1$. Let $\piHo^\pm_v$ be the sub-representation of $\piHo^\sharp_v$ with the space given by 
\[\cV_v^\pm=\stt{(x,\xi^\pm(x))\in\cV_v^\sharp\mid x\in \cV_v}.\]
Then $\piHo^\pm_v\iso\piHo_v$  with the action $\bft_v$ via $\xi^\pm$. We define \[
(\wtd\piHo_\infty,\wtd\cV_\infty)=(\piHo^\sharp_\infty,\cV_\infty^\sharp);\quad(\wtd\piHo_v,\wtd\cV_v)=(\piHo_v^+,\cV_v^+)\text{ if $v$ is finite.}\]
\end{mylist}
Let $\wh\piHo$ be the automorphic representation of $H(\A)$ whose space $\cA_{\wh\piHo}$ consisting of automorphic forms $f$ on $H(\A)$ such that  $f|_{H^0(\A)}\in \cA_{\piHo}$. Suppose that $\piHo\not\iso \piHo^c$. It is well known that  
\[\wh\sg\iso \bigoplus_\delta\left(\bigot_{v\in\frakS}\piHo_v^{\delta(v)}\bigot_{v\not\in\frakS}\piHo^\sharp_v\right),\]
where $\delta$ runs over all maps from $\frakS$ to $\stt{\pm 1}$such that $\delta(v)=+1$ for all but finitely $v\in\frakS$
(\cf\cite[Proposition 5.4]{takeda09}).
In particular, there exists a unique constitute $\wtd\piHo\subset \wh\piHo$ with the space $\cA_{\wtd\piHo}\subset \cA_{\wh\piHo}$ such that $\wtd\piHo\iso \ot_v\wtd\piHo_v$.  Let $\wtd\piHo^+$ be a unique irreducible constitute  of $\wtd\piHo$ with the space $\cA_{\wtd\piHo^+}\subset\cA_{\wtd\piHo}$ given as follows: \begin{mylist}\item $\infty\not\in\frakS$: let $(\wtd\piHo^+,\cA_{\wtd\piHo^+}):=(\wtd\piHo,\cA_{\wtd\piHo})$;\item$\infty\in\frakS$: then $(\wtd\piHo,\cA_{\wtd\piHo})=(\wtd\piHo^+,\cA_{\wtd\piHo^+})\oplus(\wtd\piHo^-,\cA_{\wtd\piHo^-})$ is reducible, where
\[(\wtd\piHo^\pm,\cA_{\wtd\piHo^\pm})\iso(\piHo^\pm_\infty\bigot_{v\not =\infty}\wtd\piHo_v,\cV_\infty^\pm \bigot_{v\not=\infty}\wtd\cV_v).\]
\end{mylist}
\begin{Remark}\label{R:1.I}When $v\in\frakS$, our choices of $\piHo^+_v$ agree with those in \cite[\S 6.1]{takeda11}. By \cite[Proposition 6.5]{takeda11}, the local theta lifts $\theta(\piHo^+_v)$ to $\GSp_4(\Q_v)$ is non-zero, and if $v\in\frakS$ is split in $F$, then $\theta(\piHo^-_v)$ is zero. In particular, the global theta lift $\theta(\wtd\piHo^-)$ is zero if $\infty\in\frakS$.
\end{Remark}
\subsubsection{Automorphic forms}Let $\bff\in \cM_\ulk(D_\A^\x,\frakA)$. For $h\in H^0(\A)$ and $u\in\cV_\infty=\cW_\ulk(\C)$, put
\[\bff_u(h)=\pair{\bff(h)}{u}_\cW.\]
For $h\in H^0(\A)$ and $\bft_\cR\in\bfmu_2(\A)$, put
\begin{align}\label{tildef}
     \wtd\bff_u(h \bft_\cR)
  = \begin{cases}  \bff_u(h),  &  \infty \not\in {\mathcal R}, \\ 
                          \bff_u( h^c),  &  \infty \in {\mathcal R}.        
     \end{cases}
\end{align}
Then $\bff_u\in\cA_{\piHo}$ and $\wtd\bff_u\in \cA_{\wtd\piHo}$. For each finite place $v$ and $f_v\in\cV_v$, we put $\wtd f_v=(f_v,f_v)\in\cV_v^\sharp$ if $v\not\in\frakS$, and $\wtd f_v=f_v\in \cV_v^+$ if $v\in\frakS$. We shall fix an isomorphism $j:\ot_v\wtd\piHo_v\iso\wtd\piHo$ such that  
\beq\label{E:iso.I} j((u_1,u_2)\bigot_{v<\infty} \wtd f_v^0)=
\wtd\bff^\circ_{(u_1,u_2)}:=\wtd\bff^\circ_{u_1}  +\wtd\piHo(\bft_\infty)\wtd\bff^\circ_{u_2}\in\cA_{\wtd\piHo}.\eeq
Note that if $\infty\in\frakS$, then \[\wtd\bff^\circ_{(u,\pm \bft_\infty u)}\in \cA_{\wtd\piHo^\pm},\text{ and }j(\ot_v\cV_v^\pm)=\cA_{\wtd\piHo^\pm}.\]
For each finite place $\q$, put
\beq\label{E:dagger}f_\q^\dag:=\begin{cases}f_\q^0&\text{ if }\q\not\in\cP,\\
f_\q^0+\ep_{\frakp^c}\cdot\piHo_\q(\eta_\q)f_\q^0&\text{ if }\q\in\cP.\end{cases}
\eeq
By the definition of $\cP$-stabilized newform $\bff^\dagger$ \eqref{E:levelraisingf}, we have
\[j((u_1,u_2)\bigot_\q \wtd f_\q^\dag)=\wtd\bff^\dagger_{u_1}+\wtd\piHo(\bft_\infty)\wtd\bff_{u_2}^\dag.\]

\subsubsection{Hermitian pairings} If $v$ is a finite place, let 
$\cB_{\piHo_v}:\cV_v\ot\bar\cV_v\to\C$ be the $H^0(\Q_v)$-invariant pairing such that $\cB_{\piHo_v}(f_v^0,f_v^0)=1$. If $v=\infty$, put
\beq\label{E:w0.I}\cJ=(\pMX{0}{1}{-1}{0},\pMX{0}{1}{-1}{0})\in  \bbH^\x\times\bbH^\x/\R^\x=H^0(\R),\eeq
and let $\cB_{\piHo_\infty}:\cW_\ulk(\C)\ot\cW_\ulk(\C)\to\C$ be the pairing given by \[\cB_{\piHo_\infty}(u_1,u_2)=\pair{u_1}{\tau_\ulk(\cJ)\ol{u_2}}_\cW.\] Let $\cB_{\piHo^\sharp_v}:\cV_v^\sharp\ot\bar\cV_v^\sharp\to\C$  be the pairing given by
\[\cB_{\piHo^\sharp_v}((u_1,w_1),(u_2,w_2)):=\onehalf(\cB_{\piHo_v}(u_1,u_2)+\cB_{\piHo_v}(w_1,w_2)).\]
Let $\cB_{\piHo_v^\pm}:=\cB_{\piHo^\sharp_v}|_{\cV_v^\pm}$ if $v\in\frakS$. By definition, if $v$ is finite, we have $\cB_{\wtd\piHo_v}(\wtd f^0_v,\wtd f^0_v)=1$.

Let $\rmd \wtd h$ (resp. $\rmd h_0$) be the Tamagawa measure on $Z_{H}(\A)\backslash H(\A)$ (resp. $Z_{H^0}(\A)\backslash H^0(\A)$). Let $\rmd\epsilon_v$ be the Haar measure on $\mu_2(\Q_v)$, which satisfies ${\rm vol}(\mu_2(\Q_v),\rmd\epsilon_v)=1$ and let $\rmd\epsilon$ be the product measure $\prod_v \rmd\epsilon_v$ on $\mu_2(\A)$. Then for each $f \in L^1(Z_H(\A)H(\Q)\bksl H(\A))$, 
\begin{align*}
    \int_{ Z_H(\A)H(\Q)\bksl H(\A) } f(\wtd h)\rmd \wtd h
 = \int_{\mu_2(\Q) \backslash \mu_2(\A)} 
    \int_{ Z_{H^0}(\A)H^0(\Q)\bksl H^0(\A) } f(h_0 \epsilon)  \rmd h_0 \rmd\epsilon. 
\end{align*}
Define the Petersson pairing $\cB_{\wtd\piHo}:\cA_{\wtd\piHo}\ot\bar\cA_{\wtd\piHo}\to\C$ by 
\begin{align*}
  {\mathcal B}_{{\wtd\piHo}}(f_1, f_2)
=   \int_{Z_H(\A)H(\Q)\bksl H(\A) } 
       f_1(\wtd h) \bar{ f_2(\wtd h) }\rmd \wtd h.
\end{align*}
Let $\pairing_{H^0}$ be the Hermitian pairing on $\cM_\ulk(D^\x_\A,\frakA)$ given by 
\begin{align*}
      \pair{\bff_1}{\bff_2}_{H^0}
  =  \int_{Z_{H^0}(\A)H^0(\Q)\bksl H^0(\A)} 
     \pair{\bff_1(h_0)}{\tau_\ulk(\cJ)\bar{\bff_2(h_0)}}_\cW\rmd h_0.\end{align*}

\begin{lm}\label{L:2.I} We have
\begin{mylist}\item  $\pair{\bff^\dag}{\bff^\dag}_{H^0}=\pair{\newform}{\newform}_{H^0}\cdot\prod_{p\in\cP}\cB_{\piHo_p}(f_p^\dag,f_p^\dag)$;\item if $\piHo\not\iso\piHo^c$, then \[ \cB_{\wtd\piHo}=\frac{ \pair{\newform}{\newform}_{H^0} }{ \dim \cW_{\underline{k}}(\C)}\cdot \prod_v\cB_{\wtd\piHo_v}\]
under the isomorphism $\wtd\piHo\iso\ot_v\wtd\piHo_v$ in \eqref{E:iso.I}.
\end{mylist}
\end{lm}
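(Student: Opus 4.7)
The plan is to use Schur's lemma to factorize both Petersson pairings as a global constant times a product of local pairings, and then to pin down the constant by testing on a convenient vector. Part (i) reduces to a local comparison at the $\cP$-places, while part (ii) rests on an explicit evaluation of $\pair{\newform}{\newform}_{H^0}$ and a careful bookkeeping of the measures on $H(\A)$ and $H^0(\A)$.

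For part (i) I pass through the Hom isomorphism $\cM_\ulk(D_\A^\times,\frakA)\iso\Hom_{D_\infty^\times}(\cW_\ulk(\C),\cA_\piHo^{\wh R_\frakA^\times})$ and the tensor decomposition $\cA_\piHo\iso\piHo_\infty\otimes\bigotimes_{\q<\infty}\cV_\q$. Under these identifications $\newform_u\leftrightarrow u\otimes\bigotimes_\q f_\q^0$, and expanding $\sV_\cP$ via \eqref{E:levelraisingf} shows $\bff^\dag_u\leftrightarrow u\otimes\bigotimes_\q f_\q^\dag$ with $f_\q^\dag$ as in \eqref{E:dagger}. The scalar Petersson pairing $[f_1,f_2]:=\int_{Z_{H^0}(\A)H^0(\Q)\bksl H^0(\A)}f_1(h_0)\bar{f_2(h_0)}\rmd h_0$ on $\cA_\piHo$ is $H^0(\A)$-invariant, so Schur's lemma gives $[\cdot,\cdot]=c_\piHo\prod_v\cB_{\piHo_v}$ for a global constant $c_\piHo$. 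Since $\cB_{\piHo_\q}(f_\q^0,f_\q^0)=1$ for $\q\notin\cP$, only the $\cP$-places contribute to the ratio, which yields (i).

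For part (ii) I first determine $c_\piHo$. Expanding $\newform(h)=\sum_i\newform_{e_i^*}(h)e_i$ in a basis $\{e_i\}$ of $\cW_\ulk(\C)$ with dual basis $\{e_i^*\}$ with respect to $\pair{\cdot}{\cdot}_\cW$, we get
\[\pair{\newform}{\newform}_{H^0}=\sum_{i,j}\cB_{\piHo_\infty}(e_i,e_j)\,[\newform_{e_i^*},\newform_{e_j^*}]=c_\piHo\sum_{i,j}\cB_{\piHo_\infty}(e_i,e_j)\cB_{\piHo_\infty}(e_i^*,e_j^*).\]
Setting $S(v):=\tau_\ulk(\cJ)\bar v$, the relations $\cJ^2=-\bfone_2$ and $\tau_\ulk(-\bfone_2)={\rm id}$ force $S^2={\rm id}$; combined with the $\tau_\ulk$-invariance and symmetry of $\pair{\cdot}{\cdot}_\cW$ (immediate from $\rho_{(k,-k)}$ having trivial central character) and the dual-basis identity $\sum_i\pair{a}{e_i^*}_\cW\pair{e_i}{b}_\cW=\pair{a}{b}_\cW$, the double sum collapses to $\sum_j\pair{S(e_j^*)}{S(e_j)}_\cW=\sum_j\overline{\pair{e_j^*}{e_j}_\cW}=\dim\cW_\ulk(\C)$, yielding $c_\piHo=\pair{\newform}{\newform}_{H^0}/\dim\cW_\ulk(\C)$.

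To finish I evaluate $\cB_{\wtd\piHo}(\wtd\bff^\circ_{(u_1,u_2)},\wtd\bff^\circ_{(u_1,u_2)})$ for arbitrary $(u_1,u_2)\in\cV_\infty^\sharp$. Using $H(\A)=H^0(\A)\mu_2(\A)$ together with \eqref{tildef}, for each $\epsilon=\bft_\cR$ the integrand over $h_0$ is the square-modulus of a sum of an element of $\cA_\piHo$ and an element of $\cA_{\piHo^c}$; since $\piHo\not\iso\piHo^c$ these lie in orthogonal isotypic components of $L^2_{\rm cusp}([H^0])$, so all cross terms vanish, and after the change of variable $h_0\mapsto h_0^c$ the inner integral equals $c_\piHo(\cB_{\piHo_\infty}(u_1,u_1)+\cB_{\piHo_\infty}(u_2,u_2))$ independent of $\cR$. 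Multiplying by ${\rm vol}(\mu_2(\Q)\bksl\mu_2(\A))=\onehalf$ and comparing with the factorized form $c_{\wtd\piHo}\cB_{\wtd\piHo_\infty}((u_1,u_2),(u_1,u_2))=\onehalf c_{\wtd\piHo}(\cB_{\piHo_\infty}(u_1,u_1)+\cB_{\piHo_\infty}(u_2,u_2))$ forces $c_{\wtd\piHo}=c_\piHo$; restricting $(u_1,u_2)$ to $\cV_\infty^\pm$ separately handles the possibly reducible case $\infty\in\frakS$. I expect the main technical obstacle to be the combinatorial identity for $c_\piHo$, whose proof requires interlocking the involution $\tau_\ulk(\cJ)^2={\rm id}$ with the $\tau_\ulk$-invariance and symmetry of $\pair{\cdot}{\cdot}_\cW$.
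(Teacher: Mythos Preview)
Your proof is correct and follows essentially the same strategy as the paper's: factorize the Petersson pairings via Schur's lemma, pin down the proportionality constant, and pass from $H^0$ to $H$ using the measure decomposition together with the orthogonality of $\cA_\piHo$ and $\cA_{\piHo^c}$.

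The one substantive difference lies in how the constant $c_\piHo=\pair{\newform}{\newform}_{H^0}/\dim\cW_\ulk(\C)$ is determined. The paper obtains this in a single line by invoking the Schur orthogonality relations for matrix coefficients of the compact group $H^0(\R)/Z_{H^0}(\R)$, applied to the archimedean integration inside $\cB_\piHo(\bff^\circ_{u_1},\bff^\circ_{u_2})$. You instead expand $\newform$ in a basis and reduce to the linear-algebra identity $\sum_{i,j}\cB_{\piHo_\infty}(e_i,e_j)\cB_{\piHo_\infty}(e_i^*,e_j^*)=\dim\cW_\ulk(\C)$, which you verify using the involution $S^2={\rm id}$, the $\tau_\ulk$-invariance and symmetry of $\pair{\cdot}{\cdot}_\cW$, and the dual-basis relation. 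This is a concrete unpacking of Schur orthogonality rather than a genuinely different idea; what you flag as the main technical obstacle is, in the paper's presentation, dispatched without comment. For the passage to $H(\A)$, you spell out by hand what the paper cites from \cite[Lemma~2.1]{gi11}, and your treatment of the reducible case $\infty\in\frakS$ by restricting to $\cV_\infty^\pm$ separately matches the paper's handling of the two constants $C_\pm$.
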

\begin{proof}From the Schur orthogonality relations, we see that  for $u_1,u_2\in\cV_\infty=\cW_\ulk(\C)$,
\[\cB_{\piHo}(\bff^\circ_{u_1},\bff^\circ_{u_2})=\frac{\pair{\newform}{\newform}_{H^0}}{\dim\cW_\ulk(\C)}\cdot\pair{u_1}{\tau_\ulk(\cJ)\ol{u_2}}_\cW=\frac{\pair{\newform}{\newform}_{H^0}}{\dim\cW_\ulk(\C)}\cdot\cB_{\piHo_\infty}(u_1,u_2).\]
On the other hand, note that for $\wtd f_1,\wtd f_2\in\cA_{\wtd\piHo}$, 
\[\cB_{\piHo}(\wtd f_1|_{H^0(\A)},\wtd f_2|_{H^0(\A)})=2\cB_{\wtd\piHo}(\wtd f_1,\wtd f_2)\] 
by \cite[Lemma 2.1]{gi11} and that for $f_1,f_2\in \cA_{\piHo}$, 
\[\int_{Z_{H^0}(\A)H^0(\Q)\bksl H^0(\A)}f_1(h_0)\ol{f_2(h_0^c)}\rmd h_0=0\]
since $\piHo^\vee=\piHo\not\iso\piHo^c$. We thus find that \beq\label{E:4.I}\begin{aligned}
\cB_{\wtd\piHo}(\wtd\bff^\circ_{(u_1,w_1)},\wtd\bff^\circ_{(u_2,w_2)})=&\onehalf\int_{Z_{H^0}(\A)H^0(\Q)\bksl H^0(\A)}(\bff_{u_1}(h_0)+\bff^\circ_{w_1}(h_0^c))(\bff^\circ_{u_2}(h_0)+\bff^\circ_{w_2}(h_0^c))\rmd h_0\\
=&\onehalf(\cB_{\piHo}(\bff^\circ_{u_1},\bff^\circ_{u_2})+\cB_{\piHo}(\bff^\circ_{w_1},\bff^\circ_{w_2}))\\
=&\frac{\pair{\newform}{\newform}_{H^0}}{\dim\cW_\ulk(\C)}\cdot\cB_{\wtd\piHo_\infty}((u_1,w_1),(u_2,w_2)).
\end{aligned}\eeq
If $\infty\not\in\frakS$, then $\wtd\piHo$ is irreducible, and we can write $\cB_{\wtd\piHo}=C_0\cdot\prod_v\cB_{\wtd\piHo_v}$ for some constant $C_0$, while if $\infty\in\frakS$, then $\wtd\piHo=\wtd\piHo^+\oplus\wtd\piHo^-$ is reducible and 
\[\cB_{\wtd\piHo}=(C_+\cB_{\piHo^+_\infty}+C_-\cB_{\piHo^-_\infty})\prod_{v<\infty}\cB_{\wtd\piHo_v}\]
for some constants $C_\pm$. In either of the cases, \eqref{E:4.I} implies that $C_0=C_\pm=\frac{\pair{\newform}{\newform}_{H^0}}{\dim\cW_\ulk(\C)}$, and the lemma follows.
\end{proof}

\subsection{Rallis inner product formula}\label{secRallisInn}

From here to the end of this paper, 
we always assume that  the automorphic representation $\pi$ of ${\rm PGL}_2(\A_F)$ introduced in Section \ref{subsec:fromsonH} is not Galois self-dual. 

Let $G=\GSp_4$. For $g\in G(\A)^+$, $\varphi\in\cS(\bfX_\A)$ and $f\in \cA_{\wtd\piHo}$, choose $h\in H(\A)$ such that $\nu(g)=\nu(h)$ and put
\begin{align*}
  \theta(\varphi,f)(g)
 = &\int_{[H_1]}  \theta(h_1h,g;\varphi)f(h_1h)  \rmd h_1,  
\end{align*}
where $H_1={\rm O}(V)$ and $\rmd h_1=\prod_v\rmd h_{1,v}$ is the Tamagawa measure of $H_1(\A)$ such that $f\in L^1(H_1(\Q_v))$, we have
\beq\label{E:H1H0} \int_{H_1(\Q_v)}f(h_{1,v})\rmd h_{1,v}=\int_{\bfmu_2(\Q_v)}\int_{H^0_1(\Q_v)}f(h_v\epsilon_v)\rmd h_v\rmd \epsilon_v.\eeq

\begin{prop}[Rallis inner product formula]\label{P:GIRallis}
Let $\varphi_1=\ot_v\varphi_{1,v},\varphi_2=\ot\varphi_{2,v}\in\cS(\bfX_\A)=\ot_v\cS(\bfX_v)$ and $f_1=\ot_v f_{1,v},\,f_2=\ot_v f_{2,v}\in\cA_{\wtd\piHo^+}\iso\ot_v\wtd\cV_v^+$. Then \begin{align*}
  \pair{\theta(\varphi_1,f_1)}{\theta(\varphi_2,f_2)}:=&  \int_{Z_G(\A)G(\Q)\bksl G(\A)}
     \theta(\varphi_1, f_1)(g) \bar{\theta(\varphi_2,f_2)(g)}\rmd g  \\
  = &  \frac{\pair{\newform}{\newform}_{H^0}}{\dim\cW_\ulk(\C)} \cdot \frac{L(1,{\rm As}^+(\pi))}{\zeta(2)\zeta(4)}\prod_{v}\cZ_v^*(\varphi_{1,v},\varphi_{2,v},f_{1,v},f_{2,v}),
  \intertext{ where }
 \cZ_v^*(\varphi_{1,v},\varphi_{2,v},f_{1,v},f_{2,v}) =&\frac{\zeta_v(2)\zeta_v(4)}{L(1,{\rm As}^+(\pi_v))}
         \int_{H_1(\Q_v)} 
      {\mathcal B}_{\omega_v} (\omega_v(h_{1,v})\varphi_{1,v}, \varphi_{2,v})
      {\mathcal B}_{{\wtd\piHo}_v}({\wtd\piHo_v}(h_{1,v})f_{1,v}, f_{2,v}) \rmd h_{1,v}.
\end{align*}
\end{prop}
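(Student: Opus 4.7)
The plan is to deduce this proposition by invoking the general Rallis inner product formula of Gan--Ichino \cite{gi11} and Gan--Qiu--Takeda \cite{gqt14} for the dual pair $(\mathrm{O}(V),\Sp_4)$, then translating the resulting identity into the stated form using \lmref{L:2.I} together with the identification of the standard $\mathrm{O}(4)$-$L$-function of $\wtd\piHo$ with the Asai $L$-function of $\pi$. Since the bulk of the paper is devoted to explicit computation of the local integrals $\cZ_v^*$ in Section \ref{LocComp}, the role of this section is essentially to set up the correct global formula; the proof itself is an orchestration of known global machinery rather than a new calculation.

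The first step is to rewrite $\pair{\theta(\varphi_1,f_1)}{\theta(\varphi_2,f_2)}$ as an integral over $\Sp_4(\Q)\bksl \Sp_4(\A)$. Because $\theta(\varphi_i,f_i)$ vanishes off $\GSp_4(\Q)\GSp_4^+(\A)$ by construction, the $Z_G(\A)$-quotient and the similitude character cancel cleanly, and \eqref{E:H1H0} lets one pass freely between $H_1$ and $H_1^0 \times \bfmu_2$. Next, I apply the Piatetski-Shapiro--Rallis doubling method: unfolding via the seesaw for $(\mathrm{O}(V)\times\mathrm{O}(V),\Sp_4)$ vs.~$(\mathrm{O}(V),\Sp_4\times \Sp_4)$ and invoking the regularized Siegel--Weil identity of \cite{gqt14} expresses the inner product as a product of a global $L$-value and an Euler product of matrix-coefficient integrals. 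For $\dim V = 4$ and $\wtd\piHo$ realized (by \subsecref{cuspGO}) as a constituent of $\mathrm{Ind}_{H^0(\A)}^{H(\A)}(\piHo)$ with $\piHo\iso\piD\boxtimes\bfone$, the exceptional isomorphism for $\mathrm{SO}(4)$ together with \cite{kris12} identifies the relevant standard $L$-function with $L(s,\mathrm{As}^+(\pi))$, and the critical value at $s=1$ is finite and non-zero by \thmref{T:AsNonvan}, using the hypothesis that $\pi$ is not Galois self-dual.

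Finally, the normalization factor $\pair{\newform}{\newform}_{H^0}/\dim \cW_{\ulk}(\C)$ emerges by applying \lmref{L:2.I}(ii) to replace the global Petersson pairing $\cB_{\wtd\piHo}$ produced by the abstract Rallis formula with the product of local pairings $\cB_{\wtd\piHo_v}$ times this constant; the local normalization factors $\zeta_v(2)\zeta_v(4)/L(1,\mathrm{As}^+(\pi_v))$ are inserted exactly so that $\cZ_v^*\equiv 1$ at almost all $v$, making the Euler product meaningful. The non-Galois-self-duality hypothesis is used both to guarantee cuspidality of $\theta(\test,\bff^\dag)$ and to ensure the irreducibility/non-vanishing statements underlying the application of \lmref{L:2.I}.

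The main technical obstacle is the bookkeeping of normalizing constants: the Tamagawa measures on $H_1$, $H_1^0$, $\Sp_4$, and $G$, the counting measure $\rmd\epsilon_v$, the factor of $\onehalf$ in the global-to-local comparison of \cite[Lemma 2.1]{gi11}, and the doubling constants must all be tracked through the identifications $H\supset H^0$ and $H_1\supset H_1^0$ to match the precise constants stated. Once this bookkeeping is in place, every remaining ingredient is a direct citation of \cite{gi11, gqt14}, \cite{kris12}, and \lmref{L:2.I}.
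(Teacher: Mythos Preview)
Your proposal is correct and follows essentially the same approach as the paper: the paper's own proof is a one-paragraph citation of \cite[Proposition 11.2, Theorem 11.3]{gqt14} and \cite[Lemma 7.11]{gi11} (with the parameters $n=m=4$, $r=2$, $\epsilon_0=-1$), together with the non-vanishing input from \thmref{T:AsNonvan} and absolute convergence of the local integrals via \cite[Lemma 7.7]{gi11}. Your identification of \lmref{L:2.I}(ii) as the source of the constant $\pair{\newform}{\newform}_{H^0}/\dim\cW_\ulk(\C)$ is exactly right, though the paper leaves this step implicit in the citation; the only ingredient you do not name explicitly is the convergence result \cite[Lemma 7.7]{gi11}, which you should cite to justify that the individual local integrals and the Euler product make sense.
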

\begin{proof}This is a special case of the Rallis inner product formula proved in \cite[Proposition 11.2, Theorem 11.3]{gqt14} (\cf \cite[Lemma 7.11]{gi11}) for $H(V_r)=\Sp_4$ and $G(U_n)={\rm O}(V)$. 
Apply $n=m=4, r=2, \epsilon_0=-1$ in the notation \cite{gqt14}.
The non-vanishing of $L(1,{\rm As}^+(\pi))$ follows from Theorem \ref{T:AsNonvan}.  
The local integrals are absolutely convergent  by \cite[Lemma 7.7]{gi11}.
\end{proof}

Define $\pairing_\cL:\cL_\kappa(\C)\ot\cL_\kappa(\C)\to \C$ to be the pairing $\pairing_{2k_2}$ introduced in Section \ref{algrep}. 
For vector-valued Siegel cusp forms $F_1,F_2: G(\A)\to \cL_\kappa(\C)$, we define the Hermitian pairing
\begin{align*}
     \rpair{F_1}{F_2}_G
 =  &\int_{Z_G(\A)G(\Q)\bksl G(\A)} 
     \langle  F_1(g), \bar{ F_2(g) } \rangle_{\cL}\rmd g\\
     =&\int_{Z_G(\A)G(\Q)\bksl G(\A)} \bbpair{F_1(g)}{F_2(g)}\rmd g,
\end{align*}
where $\bbpairing: \cL_\kappa(\C)\ot\cL_\kappa(\C)\to \C$ is the $\SU_2(\R)$-invariant Hermitian pairing given by 
\beq\label{E:pairing}\bbpair{v_1}{v_2}:=\int_{\SU_2(\R)}\pair{\rho_\lam(u)v_1}{\ol{\rho_\lam(u)v_2}}_\cL\rmd^* u,\eeq
where $\rmd^* u$ is the Haar measure on $\SU_2(\R)$ with $\vol(\SU_2(\R))=1$. 
Denote the pairing $\pairing_\cW\otimes\pairing_\cL$  on $\cW(\C)\ot\cL(\C)$ by $\pairing_{\cW\ot\cL}$.
To apply Rallis inner product formula to our case, we define local zeta integrals $\cI(\test_\infty)$ and $\cZ_\q(\test_\q,f^\dag_\q)$ by 
\begin{align}
\label{E:inftyloca}\cI(\test_\infty)=&   \int_{\bfX_\infty}  
        \langle \test_\infty(x),  \test_\infty(x) \rangle_{\cW\ot\cL}\, \rmd x,\\
  \label{E:finielocal} \cZ_\q(\test_\q,f^\dag_\q)=&\int_{H_1^0(\Q_\q)}\cB_{\om_\q}(\om_\q(h_\q)\test_\q,\test_\q)\cB_{\piHo_\q}(\piHo_\q(h_\q)f^\dag_\q,f^\dag_\q)\,\rmd h_\q.
\end{align}
\begin{prop}\label{RallisToInt}We have  \begin{align*} 
     \frac{ \rpair{\theta(\test,\bff^\dagger)}{ \theta(\test,\bff^\dagger)}_G}{ 
     \pair{\newform}{\newform}_{H^0} }   
= &   \frac{\vol(H_1^0(\R)) }{(\dim\cW_\ulk(\C))^2}\cdot\frac{L(1,{\rm As}^+(\pi))}{\zeta(2)\zeta(4)}\cdot \cI^*(\test_\infty)\cdot\prod_{\q<\infty} \cZ^*_\q(\test_\q,f^\dag_\q),
\end{align*}
where \[\cI^*(\test_\infty)=\frac{\zeta_\infty(2)\zeta_\infty(4)}{L(1,{\rm As}^+(\pi_\infty))}\cdot\cI(\test_\infty);\quad \cZ^*_\q(\test_\q,f^\dag_\q)=\frac{\zeta_p(2)\zeta_p(4)}{L(1,{\rm As}^+(\pi_p))}\cdot\cZ_\q(\test_\q,f^\dag_\q).\]
\end{prop}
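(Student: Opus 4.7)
The plan is to reduce the Petersson inner product of the vector-valued Yoshida lift $\theta(\test,\bff^\dag)$ to the scalar-valued Rallis inner product formula of Proposition \ref{P:GIRallis}, and then to collapse the resulting archimedean local integral using the explicit equivariance of $\test_\infty$.

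The first step is to pass from $\GSO(V)$ to $\GO(V)$: the Yoshida lift $\theta(\test,\bff^\dag)$ is defined via integration over $[H_1^0]$ with $\bff^\dag$ viewed on $H^0(\A)$, while Proposition \ref{P:GIRallis} applies to theta lifts from $H=\GO(V)$. Using the extension $\wtd\bff^\dag_u\in\cA_{\wtd\piHo^+}$ from \eqref{tildef} and the Haar measure decomposition \eqref{E:H1H0}, one expresses $\theta(\test,\bff^\dag)$ in terms of theta lifts from $H$; Remark \ref{R:1.I} and the non-Galois-self-duality of $\pi$ ensure the $\wtd\piHo^+$-constituent is non-zero. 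Next, fixing bases $\stt{w_\al}$ of $\cW_\ulk(\C)$ and $\stt{e_\beta}$ of $\cL_\kappa(\C)$ dual to themselves under $\pairing_\cW,\pairing_\cL$, decompose
\[
\test=\sum_{\al,\beta}\test^{\al,\beta}\ot w_\al\ot e_\beta,\qquad \test^{\al,\beta}\in\cS(\bfX_\A).
\]
This expands $\rpair{\theta(\test,\bff^\dag)}{\theta(\test,\bff^\dag)}_G$ into a sum of scalar-valued Petersson pairings amenable to Proposition \ref{P:GIRallis}. The $\SU_2(\R)$-average built into $\bbpairing$ is absorbed using the equivariance $(\om_\infty(1,u)\test_\infty)(x)=\rho_\kappa({}^{\rm t}u)\test_\infty(x)$ for $u\in\SU_2(\R)$ from \cite[Lemma 3.5]{hn15}, which identifies the $\rho_\kappa$-action of $\SU_2(\R)$ on $\theta(\test,\bff^\dag)$ with right-translation by $\SU_2(\R)\subset G(\R)$; by right-invariance of the Haar measure on $[G]/Z_G$ this $\SU_2(\R)$-integral collapses to unity.

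Applying Proposition \ref{P:GIRallis} to each summand and factoring the global pairing $\cB_{\wtd\piHo}$ via Lemma \ref{L:2.I}(ii) produces the constant $\tfrac{\pair{\newform}{\newform}_{H^0}}{\dim \cW_\ulk(\C)}\cdot\tfrac{L(1,{\rm As}^+(\pi))}{\zeta(2)\zeta(4)}$ together with a product of local zeta integrals. At each finite place $\q$, $\test_\q$ is scalar-valued and the matrix coefficient of $\cB_{\wtd\piHo_\q}$ on $\wtd f^\dag_\q$ coincides with that of $\cB_{\piHo_\q}$ on $f^\dag_\q$ (the integration over $\bfmu_2(\Q_\q)$ contributes only $\vol(\bfmu_2(\Q_\q))=1$), so the local factor matches $\cZ^*_\q(\test_\q,f^\dag_\q)$. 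The crux is the archimedean reduction: the vector-valued analog of $\cZ^*_\infty$ is an integral over $H_1(\R)$ of a product of Weil-representation and $\wtd\piHo_\infty$ matrix coefficients. Using $(\om_\infty(h,1)\test_\infty)(x)=\tau_\ulk(h^{-1})\test_\infty(x)$ for $h\in H^0_1(\R)$, also from \cite[Lemma 3.5]{hn15}, the Weil matrix coefficient couples to the $\tau_\ulk$-action on $\cW_\ulk(\C)$; since $H^0_1(\R)\cong\SO(V)(\R)=\SO(4)$ is compact and $\cW_\ulk(\C)=\cW_{k_1}(\C)\ot\cW_{k_2}(\C)$ is irreducible under $\tau_\ulk$ via the double cover $\SU(2)\times\SU(2)\surjto\SO(4)$, Schur orthogonality collapses the $H^0_1(\R)$-integral, after summing over $\stt{w_\al}$, to $\vol(H^0_1(\R))/\dim\cW_\ulk(\C)$ times the self-pairing $\cI(\test_\infty)$ over $\bfX_\infty$. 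Collecting the factors yields the claimed identity.

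The main obstacle is precisely this archimedean reduction, where one must carefully unpack the vector-valued structure of $\test_\infty$ and invoke Schur orthogonality so that the surplus factor $1/\dim\cW_\ulk(\C)$ emerges correctly. This step depends crucially on the explicit form of $\test_\infty$ in \eqref{infinitetest} together with its $H^0_1(\R)\times\SU_2(\R)$-equivariance.
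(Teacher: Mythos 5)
Your proposal follows essentially the same route as the paper's own proof: pass from $\GSO(V)$ to $\GO(V)$ via $\wtd\piHo$, expand $\test$ in bases of $\cW_\ulk(\C)$ and $\cL_\kappa(\C)$ so as to reduce the vector-valued Petersson product to a sum of scalar ones amenable to Proposition~\ref{P:GIRallis}, show the finite-place factors match $\cZ^*_\q$ by the $\bfmu_2$-invariance of $\test_\q$ and $f^\dag_\q$, and then collapse the archimedean integral using the $H^0_1(\R)$-equivariance of $\test_\infty$ together with Schur orthogonality for the compact group $H^0_1(\R)$ acting irreducibly on $\cW_\ulk(\C)$. This is exactly the skeleton of the paper's argument (the decomposition by $\bfv_I$ and the $X^\al Y^{2k_2-\al}$ monomials, the identities \eqref{E:5.I}--\eqref{E:9.I}, and the Schur computation producing the factor $\vol(H^0_1(\R))/\dim\cW_\ulk(\C)$).

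Two places where your sketch glosses over bookkeeping that the paper does carefully and that are not automatic: (1) the case distinction $\infty\in\frakS$ versus $\infty\not\in\frakS$. When $k_1=k_2$ (so $\piHo_\infty\cong\piHo^c_\infty$) the representation $\wtd\piHo_\infty$ is reducible, and one has to decompose $\sF_{\bfv_I}=\sF^+_{\bfv_I}+\sF^-_{\bfv_I}$, invoke Remark~\ref{R:1.I} to discard the $\wtd\piHo^-$-piece, and then verify that the archimedean local integral $\wtd\cZ_{I,J}$ comes out to $4^{-1}\cZ_{I,J}$ (resp.\ $8^{-1}(\cZ_{I,J}+\cZ_{I^{\rm sw},J})$) so that both cases give the same final answer. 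Your proposal only cites Remark~\ref{R:1.I} for non-vanishing; without the explicit $\wtd\cZ_{I,J}$ calculations in both cases the factor of $4$ from \eqref{E:5.I} does not visibly cancel. (2) Your remark that the $\SU_2(\R)$-average "collapses to unity" by right-invariance is not how the paper proceeds: the paper simply records that $\rpairing_G$ can be expressed either via $\pairing_\cL$ or via $\bbpairing$ in its definition and then works directly with $\pairing_\cL$, never touching the $\SU_2$-average in the course of this proof. Your reformulation is a correct alternative observation but not a step that is actually used here; it would need to be made precise (one has to be careful that the automorphy factor $J(u,\bfi)$, not $u$ itself, gives the $\rho_\kappa$-action). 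Apart from these two points, your account of the Schur-orthogonality collapse and of the finite-place matching coincides with the paper's proof.
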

\begin{proof}  We begin with some notation. Define the set 
\[\bfB:=\stt{(i,j)\mid 0\leq i\leq 2k_1,\,0\leq j\leq 2k_2}.\]
Let $\stt{\bfv_I}_{I\in\bfB}$ be the standard basis of $\cW_{\ul{k}}(\C)$ given by
\[\bfv_I:= X^i_1 Y^{2k_1-i}_1 \otimes X^j_2 Y^{2k_2-j}_2\text{ if }I=(i,j).\]
Recall the pairing $\pairing_\cW$ on $\cW_{\ul{k}}(\C)$ is introduced in Section \ref{subsec:fromsonH}. 
Then the corresponding dual basis $\stt{\bfv_I^*}_{I\in \bfB}$ 
with respect to  $\pairing_\cW$  is given by  $\bfv_I^*:=\bfv_{2\ul{k}-I}\cdot\binom{2k_1}{i}\binom{2k_2}{j}(-1)^{i+j}$. Write \begin{align*}
  \test_\infty(x) =& \sum^{2k_2}_{\al=0} \varphi_\infty^\al(x) \binom{2k_2}{\al} X^{\al}Y^{2k_2-\al}, \\
  \varphi_\infty^\al(x) =& \sum_{I\in\bfB}\varphi^\al_{I,\infty}(x) \bfv_I.
\end{align*}
Then $\varphi_{I,\infty}^\al(x)=\pair{\varphi_\infty^\al(x)}{\bfv_I^*}_\cW=(-1)^\al\cdot \pair{\test_\infty(x)}{\bfv_I^*\ot X^{2k_2-\al}Y^\al}_{\cW\ot\cL}$. Put  \begin{align*}\varphi_I^\al=&\varphi^\al_{I,\infty}\bigot\limits_{v<\infty}\test_v\in\cS(\bfX_\A).
\end{align*}
For each $I\in\bfB$, put
\[\sF_{\bfv_I}=\prod_{\q\in\cP}  (1+\ep_{\frakp^c}\cdot\piHo_\q(\eta_\q)) \wtd \bff^\circ_{\bfv_I}\in\cA_{\wtd\piHo},\]
where $\wtd\bff^\circ_{\bfv_I}\in\cA_{\wtd\piHo}$ is defined as in \eqref{tildef}. Then one checks that
(i) $\wtd\sg(\bft_v) \sF_{\bfv_I}=\sF_{\bfv_I}$ for any finite place $v$ by \eqref{tildef} and (ii) $\sF_{\bfv_I}|_{H^0(\A)}=\bff^\dagger_{\bfv_I}(=\pair{\bff^\dagger}{\bfv_I}_\cL)$. From \eqref{E:H1H0} and the fact that $\om_v(\bft_v)\test_v=\test_v$ for every finite place $v$, we can deduce that 
\beq\label{E:1.I}\theta(\varphi^\al_I,\sF_{\bfv_I})=2^{-1}\theta(\varphi^\al_I,\sF_{\bfv_I}|_{H^0(\A)})=2^{-1}\theta(\varphi^\al_I,\bff^\dagger_{\bfv_I}).\eeq
It follows that \[\theta(\test,\bff^\dagger)=2 \cdot\sum_{\al=0}^{2k_2}\theta(\varphi_I^\al,\sF_{\bfv_I})X^\al Y^{2k_2-\al}\binom{2k_2}{\al},\]
and  hence, for the pairing $\langle\ , \ \rangle$ in \propref{P:GIRallis}, we have  
\beq\label{E:5.I}
\rpair{\theta(\test,\bff^\dagger)}{\theta(\test,\bff^\dagger)}_G=4\cdot \sum_{\al=0}^{2k_2}\sum_{I,J\in\bfB}\pair{\theta(\varphi^\al_I,\sF_{\bfv_I})}{\theta(\varphi^{2k_2-\al}_J,\sF_{\bfv_J})}(-1)^\al\binom{2k_2}{\al}.
\eeq

In the case $\infty\not\in\frakS$, 
 the local vector in $\wtd\sigma_\infty$ corresponding to  $\sF_{\bfv_I}$ is  $(\bfv_I, 0)$
by the fixed isomorphism $j:\otimes_v\wtd\sigma_v\iso\wtd\sigma$ given in \eqref{E:iso.I}. 
To emphasis this correspondence, we also write 
$\sF_{\bfv_I}$ as $\sF_{(\bfv_I, 0)}$ according to the notion in \eqref{E:iso.I}.

In the case $\infty\in\frakS$, we can decompose $\sF_{\bfv_I}=\sF_{\bfv_I}^++\sF_{\bfv_I}^-$, where
\[\sF_{\bfv_I}^\pm=\onehalf(\sF_{\bfv_I}\pm\wtd\piHo(\bft_\infty)\sF_{\bfv_{I^{\rm sw}}})\in\wtd\piHo^\pm.\]
Here $I^{\rm sw}=(j,i)$ for $I=(i,j)$. The global lift $\theta(\wtd\piHo^-)=0$ by \remref{R:1.I}, so we have \[\theta(\varphi^\al_I,\sF_{\bfv_I})=\theta(\varphi^\al_I,\sF_{\bfv_I}^+).\]
The fixed isomorphism $j:\otimes_v\wtd\sigma_v\iso\wtd\sigma$ given in \eqref{E:iso.I} shows that 
  the local vector in $\wtd\sigma_\infty$ corresponding $\sF^+_{\bfv_I}$ is given by $2^{-1}(\bfv_I, \bfv_{I^{\rm sw}})$. 

Given $0\leq \al,\beta\leq 2k_2$, we consider
\[\pair{\theta(\varphi^\al_I,\sF_{\bfv_I})}{\theta(\varphi^{\beta}_J,\sF_{\bfv_J})}
=\begin{cases}\pair{\theta(\varphi^\al_I,\sF_{(\bfv_I,0)})}{\theta(\varphi^{\beta}_J,\sF_{(\bfv_J,0)})}&\text{ if }\infty\not\in\frakS,\\
\pair{\theta(\varphi^\al_I,\sF^+_{\bfv_I})}{\theta(\varphi^{\beta}_J,\sF^+_{\bfv_J})}&\text{ if }\infty\in\frakS. \end{cases}
\]
By Rallis inner product formula (\propref{P:GIRallis}), we have   
\beq\label{E:6.I}
\pair{\theta(\varphi^\al_I,\sF_{\bfv_I})}{\theta(\varphi^{\beta}_J,\sF_{\bfv_J})}
=
\frac{\pair{\newform}{\newform}_{H^0}}{\dim\cW_\ulk(\C)}\cdot \frac{L(1,{\rm As}^+(\pi))}{\zeta(2)\zeta(4)}\cdot\left(\frac{\zeta_\infty(2)\zeta_\infty(4)}{L(1,{\rm As}^+(\pi_\infty))}\wtd\cZ_{I,J}\right) \prod_{\q<\infty}\wtd\cZ^*_\q,
\eeq
where $\wtd\cZ_{I,J}$ and $\wtd\cZ_\q$ are local zeta integrals defined by 
\beq\label{E:A1}\begin{aligned}
\wtd\cZ_{I,J}=&\begin{cases}\int_{H_1(\R)}\cB_{\om_\infty}(\om_\infty(h_{1,\infty})\varphi_I^\al,\varphi_J^\beta)\cB_{\wtd\piHo_\infty}(\wtd\piHo_\infty(h_{1,\infty})(\bfv_I,0),(\bfv_J,0))\rmd h_{1,\infty}&\text{ if }\infty\not\in\frakS,\\
&\\
4^{-1}\int_{H_1(\R)}\cB_{\om_\infty}(\om_\infty(h_{1,\infty})\varphi_I^\al,\varphi_J^\beta)
      \cB_{\piHo^+_\infty}(\piHo_\infty^+(h_{1,\infty})(\bfv_I, \bfv_{I^{\rm sw}}),(\bfv_J, \bfv_{J^{\rm sw}}))
      \rmd h_{1,\infty}&\text{ if }\infty\in\frakS,\end{cases}\\
\wtd\cZ^*_\q=&\frac{\zeta_\q(2)\zeta_\q(4)}{L(1,{\rm As}^+(\pi_\q))}\int_{H_1(\Q_\q)}\cB_{\om_\q}(\om(h_{1,\q})\test_\q,\test_\q)\cB_{{\wtd\piHo}_\q}({\wtd\piHo}_\q(h_{1,\q})\wtd f^\dagger_\q,\wtd f^\dagger_\q)\rmd h_{1,\q}\quad \text{ if }\q<\infty.
\end{aligned}\eeq
For any finite place $\q$, we have
\begin{align*}
\omega_\q(\bft_\q) \test_\q  =\test_\q,   \quad
\wtd\piHo_\q ({\mathbf t}_\q)\wtd f^\dagger_\q = \wtd f^\dagger_\q,
\end{align*}
and hence the local zeta integral $\wtd\cZ_p$ equals 
\beq\label{E:9.I}
\begin{aligned}
& \onehalf\int_{H^0_1(\Q_\q)} \cB_{\omega_\q} (\omega_\q(h_\q)\test_\q, \test_\q)\cB_{\wtd\piHo_\q}( \wtd\piHo_\q(h_\q)\wtd f^\dag_\q,\wtd f^\dag_\q)+\cB_{\omega_\q} (\omega_\q(h_\q\bft_\q)\test_\q, \test_\q)\cdot\cB_{\wtd\piHo_\q}( \wtd\piHo_\q(h_\q\bft_\q)\wtd f^\dag_\q,\wtd f^\dag_\q)\rmd h_\q \\
=&\int_{H^0_1(\Q_\q)} 
        {\mathcal B}_{\omega_\q} (\omega_v(h_\q)\test_\q, \test_\q)
        {\mathcal B}_{\piHo_\q}( \piHo_v(h_\q)f^\dag_\q,f^\dag_\q)\rmd h_v =\cZ_\q. 
\end{aligned}\eeq
To compute the archimedean local zeta integral $\wtd\cZ_{I,J}$, we put
\[\cZ_{I,J}:=\int_{H_1^0(\R)}\cB_{\om_\infty}(\om_\infty(h_\infty)\varphi_I^\al,\varphi_J^\beta)\cB_{\piHo_\infty}(\piHo_\infty(h_\infty)\bfv_I,\bfv_J)\rmd h_\infty.\]
Assume that  $\infty\not\in\frakS$. 
By the definition of ${\cB}_{\wtd{\sigma}_\infty}$, we find that 
\begin{align*}
      {\cB}_{\wtd{\sigma}_\infty}(\wtd{\sigma}_\infty(h_\infty)({\bfv}_I, 0),  ({\bfv}_J, 0) )
      =& \frac{1}{2}  {\cB}_{\sigma_\infty}( \sigma_\infty(h_\infty){\bfv}_I,  {\bfv}_J ),  \\
      {\cB}_{\wtd{\sigma}_\infty}(\wtd{\sigma}_\infty(h_\infty {\bft}_\infty)({\bfv}_I, 0),  ({\bfv}_J, 0) )
      =&       {\cB}_{\wtd{\sigma}_\infty}(\wtd{\sigma}_\infty(h_\infty )(0, {\bfv}_I),  ({\bfv}_J, 0) )
     =0
\end{align*} 
for each $h_\infty\in H^0_1({\R})$.  
Since ${\rm vol}(\mu_2(\R),\rmd\epsilon_\infty)=1$, we obtain 
\begin{align*}
     \wtd{\mathcal Z}_{I,J} 
   =\frac{1}{2} 
         \int_{H^0_1({\mathbf R})} 
           {\mathcal B}_{\omega_\infty}(\omega_\infty(h_{\infty})\varphi^\alpha_I, \varphi^\beta_J ) 
           \frac{1}{2}  {\mathcal B}_{\sigma_\infty}( \sigma_\infty(h_\infty){\mathbf v}_I,  {\mathbf v}_J ) {\rm d}h_\infty   
   = 4^{-1} {\mathcal Z}_{I,J}.       
\end{align*}
If $\infty\in\frakS$, then $k_1=k_2$ and $\om_\infty(\bft_\infty)\varphi_I^\al=\varphi_{I^{\rm sw}}^\al$. 
By the definition of $\cB_{\sigma^+_\infty}$, we have 
\begin{align*}
    {\cB}_{\sigma^+_\infty}(({\bfv}_I, {\bfv}_{I^{\rm sw}}), ({\bfv}_J, {\bfv}_{J^{\rm sw}}))
  =\frac{1}{2}  
      \left\{   {\cB}_{\sigma_\infty}({\bfv}_I,{\bfv}_J) 
                  +  {\cB}_{\sigma_\infty}({\bfv}_{I^{\rm sw}},{\bfv}_{J^{\rm sw}}) 
       \right\}
   = {\cB}_{\sigma_\infty}({\bfv}_I,{\bfv}_J).  
\end{align*}
By using ${\rm vol}(\mu_2(\R),\rmd\epsilon_\infty)=1$ again, we obtain  
\begin{align*}
     \wtd{\cZ}_{I,J} 
  =& 8^{-1}\int_{H^0_1({\R})} 
        {\cB}_{\omega_\infty}(\omega_\infty(h_{\infty})\varphi^\al_I, \varphi^\beta_J ) 
        {\cB}_{\sigma^+_\infty} 
                        (\sigma^+_\infty(h_{\infty}) ({\bfv}_I, {\bfv}_{I^{\rm sw}}  ), 
                        ({\bfv}_J, {\bfv}_{J^{\rm sw}}  ) )                                                                     \\
     &\quad   +     {\cB}_{\omega_\infty}(\omega_\infty(h_{\infty}{\bft}_\infty)\varphi^\al_I, \varphi^\beta_J ) 
              {\cB}_{\sigma^+_\infty} 
                        (\sigma^+_\infty(h_{\infty}{\bft}_\infty) ({\bfv}_I, {\bfv}_{I^{\rm sw}}  ), 
                        ({\bfv}_J, {\bfv}_{J^{\rm sw}}  ) )                         
                        {\rm d}h_\infty   \\
  =& 8^{-1}\int_{H^0_1({\R})} 
        {\cB}_{\omega_\infty}(\omega_\infty(h_{\infty})\varphi^\al_I, \varphi^\beta_J ) 
        {\cB}_{\sigma^+_\infty} 
                        (\sigma^+_\infty(h_{\infty}) ({\bfv}_I, {\bfv}_{I^{\rm sw}}  ), 
                        ({\bfv}_J, {\bfv}_{J^{\rm sw}}  ) )                                                                     \\
     &\quad   +     {\cB}_{\omega_\infty}(\omega_\infty(h_{\infty})\varphi^\al_{I^{\rm sw}}, \varphi^\beta_J ) 
              {\cB}_{\sigma^+_\infty} 
                        (\sigma^+_\infty(h_{\infty} ) ({\bfv}_{I^{\rm sw}}, {\bfv}_I  ), 
                        ({\bfv}_J, {\bfv}_{J^{\rm sw}}  ) )                         
                        {\rm d}h_\infty   \\
  =& 8^{-1}\int_{H^0_1({\R})} 
        {\cB}_{\omega_\infty}(\omega_\infty(h_{\infty})\varphi^\al_I, \varphi^\beta_J ) 
        {\cB}_{\sigma_\infty} 
                        (\sigma_\infty(h_{\infty}) {\bfv}_I, 
                        {\bfv}_J )                                                                     \\
     &\quad   +     {\cB}_{\omega_\infty}(\omega_\infty(h_{\infty})\varphi^\al_{I^{\rm sw}}, \varphi^\beta_J ) 
              {\cB}_{\sigma_\infty} 
                        (\sigma_\infty(h_{\infty} ) {\bfv}_{I^{\rm sw}}, 
                        {\bfv}_J )                         
                        {\rm d}h_\infty   \\
   =& 8^{-1} ({\cZ}_{I,J} + {\cZ}_{I^{\rm sw}, J}). 
\end{align*}

To simply $\cZ_{I,J}$, we note that by the definition of $\test_\infty$ we have
\[\ol{\test_\infty(x)}=\tau_\ulk(\cJ)\test_\infty(x).\]
This implies that $\ol{\varphi^\al_{I,\infty}(x)}=(-1)^I\varphi_{2\ulk-I,\infty}^\al(x)$. We have
\begin{align*}
\cZ_{I,J}=&\int_{H_1^0(\R)}\int_{\bfX_\infty}\pair{\varphi^\al_\infty(x)}{\tau_{\ul{k}}(h_\infty)\bfv_I^*}_\cW\cdot \ol{\varphi^\beta_{J,\infty}(x)}\cdot\pair{\tau_{\ul{k}}(h_\infty)\bfv_I}{\bfv_{2\ulk-J}}_\cW\cdot(-1)^J\rmd x\rmd h_\infty\\
=&\frac{\pair{\bfv_I^*}{\bfv_I}_\ulk\vol(H^0_1(\R))}{\dim\cW_\ulk(\C)}\cdot \int_{\bfX_\infty}\pair{\varphi^\al_\infty(x)}{\bfv_{2\ulk-J}}_\cW\cdot \ol{\varphi^\beta_{J,\infty}(x)}(-1)^J\rmd x\\
=&\frac{\vol(H_1(\R))}{\dim\cW_\ulk(\C)}\cdot \int_{\bfX_\infty}\pair{\varphi^\al_\infty(x)}{\bfv_{2\ulk-J}}_\cW\cdot\varphi^\beta_{2\ulk-J, \infty}(x)\rmd x.
\end{align*}
In particular, $\cZ_{I,J}$ is independent of $I$. Therefore, we obtain
\[\wtd\cZ_{I,J}=4^{-1}\cZ_{I,J}=\frac{\vol(H_1(\R))}{4\dim\cW_\ulk(\C)}\cdot \int_{\bfX_\infty}\pair{\varphi^\al_\infty(x)}{\bfv_{2\ulk-J}}_\cW\cdot\varphi^\beta_{2\ulk-J,\infty}(x)\rmd x.
\]
\begin{align*}
\sum_{\al=0}^{2k_2}\sum_{I,J\in\bfB}\cZ_{I,J}\cdot(-1)^\al\binom{2k_2}{\al}
=&\sum_{\al=0}^{2k_2}\vol(H_1^0(\R))\cdot \int_{\bfX_\infty}\sum_{J\in\bfB}\pair{\varphi^\al_\infty(x)}{\bfv_{2\ulk-J}}_\cW\cdot \varphi^{2k_2-\al}_{2\ulk-J,\infty}(x)\cdot(-1)^\al\binom{2k_2}{\al}\rmd x\\
=&\vol(H_1^0(\R))\cdot\int_{\bfX_\infty}\sum_{\al=0}^{2k_2}\pair{\varphi^\al_\infty(x)}{\varphi^{2k_2-\al}_\infty(x)}_\cW\cdot(-1)^\al\binom{2k_2}{\al}\rmd x\\
=&\vol(H_1^0(\R))\cdot\int_{\bfX_\infty}\pair{\test_\infty(x)}{\test_\infty(x)}_{\cW\ot\cL}\rmd x
\end{align*}
Combined with \eqref{E:5.I}, \eqref{E:6.I} and \eqref{E:9.I}, the above equation yields the proposition.
\end{proof}

The explicit calculations of local integrals $\cI(\test_\infty)$ and $\cZ^0_p$ will be postponed to the next section.  
\begin{cor}\label{C:formulaa}
Assume that $ \Delta_F$ and $N^+N^-$ are coprime and that $\pi$ is not Galois self-dual. For $p\divides \frakN$, put \[\ep_p=\begin{cases}
\ep_{\frakp}\ep_{\frakp^c}&\text{ if }p=\frakp\frakp^c\text{ is split in $F$},\\
\ep_\frakp&\text{ if }p=\frakp\text{ is inert in $F$}. \end{cases}\]Then we have
\begin{align*}
  \frac{ \rpair{\theta(\test, {\mathbf f}^\dag)}{\theta(\test, {\mathbf f}^\dag)}_G}{ \pair{\bff^\dag}{\bff^\dag}_{H^0} }  
= &      \frac{ L(1, {\rm As}^+(\pi) )}{ \zeta(2) \zeta(4)} 
\cdot  \frac{(-1)^{k_2}\vol(\cU,\rmd h)2^{\#\cP}}{2^{2k_1+7}(2k_1+1)(2k_2+1)^2N^2\Delta_F^3}\\
&\times\frac{ \zeta_{N_F}(4) }{ \zeta_{N_F}(1) }\cdot        \prod_{p\divides\frakN}(1+\ep_p)\cdot \prod_{  p\mid \Delta_F  } (1+p^{-1}).
\end{align*}
\end{cor}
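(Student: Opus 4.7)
The plan is to derive the corollary as a direct consequence of \propref{RallisToInt}, combined with \lmref{L:2.I}(i) and the explicit local zeta integral evaluations of \secref{LocComp}. The overall structure is that of bookkeeping once the local data is in hand.

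Concretely, I would first apply \lmref{L:2.I}(i) to convert $\pair{\bff^\circ}{\bff^\circ}_{H^0}$ appearing in the denominator of the right-hand side of \propref{RallisToInt} into $\pair{\bff^\dag}{\bff^\dag}_{H^0}$: this multiplies the formula by $\prod_{p\in\cP}\cB_{\piHo_p}(f_p^\dag, f_p^\dag)^{-1}$, which is an elementary local constant since $f_p^\dag = f_p^0 + \ep_{\frakp^c}\piHo_p(\eta_\frakp)f_p^0$. Next I would substitute the archimedean integral $\cI^*(\test_\infty)$, which should contribute the factors $(-1)^{k_2}$, $2^{-(2k_1+7)}$, and one power of $1/(2k_1+1)$; combining with the normalization $1/(\dim\cW_\ulk(\C))^2 = 1/((2k_1+1)^2(2k_2+1)^2)$ from \propref{RallisToInt} yields the archimedean portion $(-1)^{k_2}/(2^{2k_1+7}(2k_1+1)(2k_2+1)^2)$ appearing in the statement. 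Finally, at each finite prime I would substitute the explicit value of $\cZ^*_p(\test_p, f_p^\dag)$: at $p\nmid N_F$ with $p\notin\cP$ the normalized integral equals $\vol(\cU_p, \rmd h_p)$, so that these primes assemble into the global volume $\vol(\cU, \rmd h)$ together with the archimedean factor $\vol(H_1^0(\R))$ from \propref{RallisToInt}; at primes dividing $N_F$ or in $\cP$ one reads off the factors $(1+\ep_p)$ for $p\divides \frakN$, $(1+p^{-1})$ for $p\divides \Delta_F$, the $2^{\#\cP}$ from $\cP$, the denominator contribution $N^2\Delta_F^3$, and the $L$-factor compensation that produces the ratio $\zeta_{N_F}(4)/\zeta_{N_F}(1)$ after partial Euler-product cancellation against $1/(\zeta(2)\zeta(4))$.

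The main technical work is the local zeta integral computation itself, postponed to \secref{LocComp}. In particular, the primes dividing $N^-$ (where $D_\q$ is a division algebra) and the primes ramified in $F/\Q$ require a careful case-by-case analysis of the Weil representation on the distinguished test function $\test_p$ and of the Atkin-Lehner action on $f_p^\dag$. For the corollary itself, however, once the local integrals are available the remaining step is the direct verification that the Tamagawa measure factors as $\vol(\cU, \rmd h) = \vol(H_1^0(\R))\prod_{p<\infty}\vol(\cU_p, \rmd h_p)$ and that the product of local bad-$L$-factor ratios matches the global factor $\zeta_{N_F}(4)/\zeta_{N_F}(1)$.
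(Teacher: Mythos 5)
Your proposal follows the same route as the paper's proof: substitute the archimedean integral from \propref{locinfty}, the local zeta integrals from Propositions \ref{P:splita.loc}--\ref{P:ramified.loc}, and \lmref{L:2.I}(i) into \propref{RallisToInt}, then collect the volume, $L$-factor, and Euler-factor contributions. One small slip: you state that $\cI^*(\test_\infty)$ contributes a factor $1/(2k_1+1)$, but in fact $\cI^*(\test_\infty) = (-1)^{k_2}(2k_1+1)/2^{2k_1+7}$ carries $(2k_1+1)$ in the \emph{numerator}, and it is only after combining with $1/(\dim\cW_{\ul k}(\C))^2 = 1/((2k_1+1)^2(2k_2+1)^2)$ that one is left with $1/(2k_1+1)$; as written, your two sentences are mutually inconsistent, though the resulting archimedean constant you record, $(-1)^{k_2}/\bigl(2^{2k_1+7}(2k_1+1)(2k_2+1)^2\bigr)$, is correct. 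Otherwise the bookkeeping (converting $\pair{\newform}{\newform}_{H^0}$ to $\pair{\bff^\dag}{\bff^\dag}_{H^0}$ via \lmref{L:2.I}(i), assembling the local volumes into $\vol(\cU,\rmd h)$, reading off the factors $(1+\ep_p)$, $(1+p^{-1})$, $2^{\#\cP}$, $N^2\Delta_F^3 = N_F^2\Delta_F$, and $\zeta_{N_F}(4)/\zeta_{N_F}(1)$) matches the argument in the text.
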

\begin{proof}
Recall that  $L(s,{\rm As}^+(\pi_\infty))=\Gamma_\C(s+k_1+k_2+1) \cdot  \Gamma_\C(s+k_1-k_2)$. By \propref{locinfty}, we have
\[\cI(\test_\infty)=\frac{(-1)^{k_2}(2k_1+1)}{2^{2k_1+7}}\cdot\frac{L(1,{\rm As}^+(\pi_\infty))}{\zeta_\infty(2)\zeta_\infty(4)}.\]
On the other hand, by the formulas of the local zeta integrals $\cZ^0_p(\test_p,f^\dag_p)$ in \propref{P:splita.loc}, \ref{P:splitb.loc}, \ref{P:inert.loc}, and \ref{P:ramified.loc}, we find that\begin{align*}
 \prod_{p}\cZ^*_p(\test_p,f_p^\dag)=  &  \frac{L(1,{\rm As}^+(\pi))}{ \zeta(2) \zeta^{N_F}(4) \zeta_{N_F}(1)} \cdot \frac{\vol(\cU,\rmd h)}{N_F^2\Delta_F}\cdot
 \prod_{p\in \cP}
                 2\cB_{\piHo_p}(f_p^\dag,f_p^\dag)
   \cdot \prod_{p\divides\frakN }   
                 (1+ \ep_p)\cdot\prod_{ p\divides \Delta_F }  
                     (1+p^{-1}).  
\end{align*}
The corollary follows from \propref{RallisToInt} and \lmref{L:2.I} (1).
\end{proof}

\subsection{The Petersson norm of classical Yoshida lifts}\label{S:PetYL}
Note that the pairing $\rpairing_G$ may not be positive definite unless $k_2=0$. In this subsection, we introduce a positive definite Hermitian pairing on the space of Siegel modular forms and rephrase \corref{C:formulaa} in terms of classical Siegel cusp forms of genus two. Define the Hermitian pairing $\cB_\cL:\cL_\kappa(\C)\ot\cL_\kappa(\C)\to\C$ by 
\beq\label{E:pairBL}\cB_\cL(v_1,v_2):=\pair{v_1}{\rho_\kappa(w_0)\ol{v_2}}_\cL,\quad w_0=\pMX{0}{1}{-1}{0}.\eeq
Then it is easy to see that $\cB_\cL$ is an $\SU_2(\R)$-invariant and positive definite Hermitian pairing. 
\begin{lm}\label{L:5.I}For $v_1,v_2\in\cL_\kappa(\C)$, we have \[\bbpair{v_1}{v_2}=\frac{(-1)^{k_2}}{2k_2+1}\cdot \cB_\cL(v_1,v_2).\]
\end{lm}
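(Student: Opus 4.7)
The plan is to exhibit both sides of the identity as $\SU_2(\R)$-invariant Hermitian sesquilinear forms on the irreducible representation $\cL_\kappa(\C)$ and then match constants at a single test vector. Invariance of $\bbpairing$ under $\SU_2(\R)$ is immediate from its definition as a $\SU_2(\R)$-average. For $\cB_\cL$, invariance follows from the identity $\bar u = w_0 u w_0^{-1}$ valid for all $u \in \SU_2(\R)$ (immediate from writing $u = \pMX{a}{b}{-\bar b}{\bar a}$), combined with $w_0^{-1} = -w_0$ and $\rho_\kappa(-I) = \mathrm{id}$ (which holds because $2k_2$ is even); these let one move $\rho_\kappa(w_0)$ past $\rho_\kappa(\bar u)$ to produce $\rho_\kappa(u)\rho_\kappa(w_0)$, after which the $\SL_2$-invariance of $\pair{\cdot}{\cdot}_\cL$ (up to a power of $\det u$, equal to $1$ on $\SU_2(\R)$) closes the check. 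Hermiticity of both forms is a direct calculation using symmetry of $\pair{\cdot}{\cdot}_{2k_2}$ in even degree.

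Since $(\rho_\kappa, \cL_\kappa(\C))$ is the irreducible $\SU_2(\R)$-representation of dimension $2k_2+1$ (it is the standard $\Sym^{2k_2}$ action once the determinant twist is trivialized on $\SU_2(\R)$), Schur's lemma produces a scalar $c$ with $\bbpair{v_1}{v_2} = c\cdot\cB_\cL(v_1,v_2)$. To pin down $c$, I evaluate at the highest weight vector $v_1 = v_2 = X^{2k_2}$. The right-hand side is trivial: $\rho_\kappa(w_0)X^{2k_2} = Y^{2k_2}$ and $\pair{X^{2k_2}}{Y^{2k_2}}_{2k_2} = 1$, so $\cB_\cL(X^{2k_2},X^{2k_2}) = 1$ and therefore $c = \bbpair{X^{2k_2}}{X^{2k_2}}$.

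For the averaged pairing at $X^{2k_2}$, a binomial expansion using $\rho_\kappa(u)X^{2k_2} = (aX - \bar b Y)^{2k_2}$ for $u = \pMX{a}{b}{-\bar b}{\bar a}$ collapses under the defining formula of $\pair{\cdot}{\cdot}_{2k_2}$ to
\[\pair{\rho_\kappa(u)X^{2k_2}}{\overline{\rho_\kappa(u)X^{2k_2}}}_\cL = (\bar a \bar b - ab)^{2k_2} = (-4)^{k_2}\,\Im(ab)^{2k_2}.\]
Parametrizing $a = e^{i\alpha}\cos\theta$, $b = e^{i\beta}\sin\theta$ gives the normalized Haar measure $\frac{1}{4\pi^2}\sin(2\theta)\,d\theta\,d\alpha\,d\beta$, and the integral of $\Im(ab)^{2k_2} = 2^{-2k_2}\sin^{2k_2}(\alpha+\beta)\sin^{2k_2}(2\theta)$ decouples into standard Beta-function integrals and evaluates to $\frac{1}{4^{k_2}(2k_2+1)}$. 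Thus $c = (-4)^{k_2}\cdot\frac{1}{4^{k_2}(2k_2+1)} = \frac{(-1)^{k_2}}{2k_2+1}$, as required.

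The only real labor lies in the explicit Beta-function computation of the archimedean integral; the invariance-plus-Schur reduction is purely structural, and this is what I would expect to be the single technical point requiring care (especially the sign bookkeeping through $(-\bar b)^{2k_2-i}(-b)^i$ and the identification $\bar a\bar b - ab = -2i\Im(ab)$).
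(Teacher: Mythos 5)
Your proposal is correct and follows essentially the same route as the paper: Schur's lemma reduces the identity to computing a single constant, which both you and the authors pin down by evaluating at the highest weight vector $X^{2k_2}$ and working out the $\SU_2(\R)$-average with the Euler-angle parametrization and a Wallis/Beta integral. The only notable difference is presentational: you collapse the matrix coefficient to the closed form $(\bar a\bar b - ab)^{2k_2} = (-4)^{k_2}\Im(ab)^{2k_2}$ before integrating, whereas the paper keeps the binomial sum and observes that only the $a = k_2$ term survives the angular integration -- these are the same computation.
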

\begin{proof}
Since $\bbpairing$ and $\cB_\cL$ are both $\SU_2(\R)$-invariant Hermitian pairing 
   and $\cL_\kappa(\C)$ is irreducible, we have \[\bbpair{v_1}{v_2}=C\cdot\cB_\cL(v_1,v_2)=C\cdot \pair{v_1}{\rho_\kappa(w_0)\ol{v_2}}_\cL\]
for some constant $C$. Letting $v_1=v_2=X^{2k_2}$, we have \begin{align*}C=&\int_{\SU_2(\R)}\pair{\rho_\kappa(u)X^{2k_2}}{\rho_\kappa(\ol{u})X^{2k_2}}_\cL\rmd^* u\\
=&\int_{\SU_2(\R)}\sum_{a}(-1)^a\binom{2k_2}{a}\al^a\ol{\al}^{2k_2-a}\beta^a\ol{\beta}^{2k_2-a}\rmd^* u,\,u=\pMX{\al}{\beta}{-\ol{\beta}}{\ol{\al}}\in\SU_2(\R).
\end{align*}
For $u\in  \SU_2(\R)$, we introduce the coordinates $u=u(\psi,\theta,\varphi)$: 
\[u=\pMX{\al}{\beta}{-\ol{\beta}}{\ol{\al}},\,\al=\cos\psi \cdot e^{\sqrt{-1}\theta},\,\beta=\sin\psi\cdot  e^{\sqrt{-1}\varphi},\,0\leq \theta,\varphi\leq 2\pi,0\leq \psi\leq \pi/2. \]
Then the Haar measure $\rmd^* u$ is given by 
\[\rmd^* u=(4\pi)^{-2}\sin 2\psi \rmd \psi\rmd \theta\rmd \varphi.\]
We thus find that
\begin{align*}C=&(-1)^{k_2}\binom{2k_2}{k_2}2^{-2k_2}\int_0^{\pi/2}(\sin 2\psi)^{2k_2+1}\rmd \psi\\
=&(-1)^{k_2}\binom{2k_2}{k_2}2^{-2k_2}\cdot 2^{2k_2}\frac{(k_2!)^2}{(2k_2+1)!}=\frac{(-1)^{k_2}}{2k_2+1}.\qedhere
\end{align*}
\end{proof}

Define the classical normalized Yoshida lift $\theta^*_{\bff^\dag}: {\frakH}_2  \to \cL_\kappa(\C)$ by \begin{align*}
  \theta^*_{\bff^\dag}(Z)=&\frac{1}{\vol(\cU,\rmd h)} \rho_\kappa(J(g_\infty,\bfi))\theta(\test,\bff^\dag)(g_\infty)\\
  &\quad (g_\infty\in\Sp_4(\R), g_\infty\cdot \bfi=Z).
\end{align*}
Applying the proof of \cite[Proposition 3.6]{hn15} verbatim, one can show that $\theta_{\bff^\dag}^*$ is a holomorphic vector-valued Siegel modular from of weight $\Sym^{2k_2}(\C)\ot\det^{k_1-k_2+2}$ and level $\Gamma_0^{(2)}(N_F)$ and has $\ell$-adic integral Fourier coefficients if ${\mathbf f}$ is normalized so that the values of $\bff$ on $\wh D^\x$ are all $\ell$-adically integral. 

Define the Petersson norm of $\theta^*_{\bff^\dag}$ by 
\[
     \pair{\theta^*_{\bff^\dag}}{\theta^*_{\bff^\dag}}_{\frakH_2} 
  =  \int_{\Gamma_0^{(2)}(N_F) \backslash \frakH_2} \cB_\cL(\theta^*_{\bff^\dag}(Z),\theta^*_{\bff^\dag}(Z))  (\det Y)^{k_1+2} \frac{\rmd X\rmd Y}{(\det Y)^{3} }, 
\]
where $Z=X+\sqrt{-1}Y\in {\frakH}_2$ and $\rmd X = \prod_{j\leq l}\rmd x_{jl}, \rmd Y=\prod_{j\leq l} \rmd y_{jl}$ for $X=(x_{jl})$ and $Y=(y_{jl})$. Recall that $R$ is the Eichler order of level $N^+\cO_F$ contained in $R^0$. For $\bff_1,\bff_2\in \cM_\ulk(D_\A^\x,N^+\cO_F^\x)$, put\beq\label{E:fnorm}\begin{aligned}\pair{\bff_1}{\bff_2}_{R}:=&\frac{1}{\vol(\bar U_R,\rmd h_0)}\pair{\bff_1}{\bff_2}_{H^0}\\=&\sum_{[a]\in D^\x\bksl \wh D^\x/\wh R^\x}\pair{\bff_1(a)}{\ol{\tau_\ulk(\cJ)\bff_2(a)}}_\cW\cdot\frac{1}{\#\Gamma_a},\end{aligned}\eeq
where $\bar U_R$ is the image of $U_R$ in $H^0(\A)/Z_{H^0}(\A)$ and $\Gamma_a=(a \wh R^\x a^{-1}\cap D^\x)/\stt{\pm 1}$.
\begin{thm}\label{T:ClaInnPrd}
Let $r_F$ be the number of primes ramified in $F$. Put \[r_{F,2}=\begin{cases}1&\text{ if }2\divides \Delta_F,\\
0&\text{ if } 2\ndivides \Delta_F.\end{cases}\] We have 
\begin{align*}
    \frac{\pair{\theta^*_{\bff^\dag}}{\theta^*_{\bff^\dag}}_{\frakH_2}}{\pair{\bff^\dag}{\bff^\dag}_{R}}  
=&  \frac{ 2^\beta  N}{(2k_1+1)(2k_2+1)}\cdot L(1, {\rm As}^+(\pi) )\cdot \prod_{p\divides \frakN}(1+\ep_p)\cdot \prod_{  p\mid \Delta_F  } (1+p^{-1}),
\end{align*}
where\[\beta=\#\cP+4r_{F,2}-2k_1-7-r_F \]
and $\cP$ is the finite set defined in \eqref{E:badprime}.
\end{thm}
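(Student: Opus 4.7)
The plan is to deduce Theorem \ref{T:ClaInnPrd} from Corollary \ref{C:formulaa} by converting both sides of the adelic formula from adelic to classical normalizations, treating the Siegel (symplectic) and orthogonal sides separately.

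For the Siegel side, I would invert the defining identity of $\theta^*_{\bff^\dag}$ to write $\theta(\test,\bff^\dag)(g) = \vol(\cU,\rmd h)\cdot\rho_\kappa(J(g_\infty,\bfi)^{-1})\theta^*_{\bff^\dag}(g_\infty\cdot\bfi)$. Using strong approximation $\Sp_4(\A) = \Sp_4(\Q)\Sp_4(\R)^+\prod_p K_p$ with $K_p = \Sp_4(\Z_p)\cap\Gamma^{(2)}_0(N_F)$, together with the Iwasawa decomposition at infinity (which gives $J(g_\infty,\bfi) = Y^{-1/2}J(k_\infty,\bfi)$ with $J(k_\infty,\bfi)\in U(2)$ acting unitarily on $\cL_\kappa(\C)$ through $\bbpairing$), the standard unfolding of the adelic integral yields
$$\rpair{\theta(\test,\bff^\dag)}{\theta(\test,\bff^\dag)}_G = c_f\cdot\vol(\cU,\rmd h)^2\int_{\Gamma^{(2)}_0(N_F)\bksl\frakH_2}\bbpair{\theta^*_{\bff^\dag}(Z)}{\theta^*_{\bff^\dag}(Z)}(\det Y)^{k_1+2}\frac{\rmd X\,\rmd Y}{(\det Y)^3},$$
where $c_f$ is an explicit product of local volumes $\vol(K_p)$. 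Applying Lemma \ref{L:5.I} then replaces $\bbpairing$ by $\cB_\cL$ at the cost of a factor $(-1)^{k_2}/(2k_2+1)$, producing $\pair{\theta^*_{\bff^\dag}}{\theta^*_{\bff^\dag}}_{\frakH_2}$ on the right-hand side.

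For the orthogonal side, the definition \eqref{E:fnorm} gives directly $\pair{\bff^\dag}{\bff^\dag}_{H^0} = \vol(\bar U_R,\rmd h_0)\cdot\pair{\bff^\dag}{\bff^\dag}_R$. Combining the two reformulated identities and substituting Corollary \ref{C:formulaa}, the signs $(-1)^{k_2}$ cancel and one power of $(2k_2+1)$ cancels against the $(2k_2+1)^2$ in the denominator. The theorem then reduces to the purely local identity
$$\frac{\vol(\bar U_R,\rmd h_0)}{c_f\cdot\vol(\cU,\rmd h)}\cdot\frac{\zeta_{N_F}(4)}{\zeta(2)\zeta(4)\,\zeta_{N_F}(1)} = 2^{4r_{F,2}-r_F}\cdot N^3\,\Delta_F^3,$$
which involves only local Haar measures and Euler factors.

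The main obstacle is verifying this final identity by a careful place-by-place calculation. At primes $p\nmid N_F$ the local volumes $\vol(\bar U_{R_p})$, $\vol(\cU_p)$ and $\vol(K_p)$ cancel against the corresponding Euler factors of $\zeta_p(2)\zeta_p(4)$; at primes $p\mid N^+N^-$ the local masses account for the requisite powers of $N$; and at the ramified primes $p\mid\Delta_F$ the local structure of the Eichler order $R$ combined with the Siegel parahoric $K_p$ produces the powers of $2$ giving $2^{4r_{F,2}-r_F}$ (with an extra factor $2^4$ at primes above $2$) and the cancellation of $\Delta_F^3$. The archimedean volumes of $\cU_\infty$ and $H^0_1(\R)$ supply the remaining constants. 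The technical core is thus a careful comparison of the integral structures on the orthogonal side (the Eichler order $R$ and the $R$-group $\cR(\GO(V)\times\GSp_4)$) with those on the symplectic side (the Siegel parahoric $K_p$ of level $N_F$).
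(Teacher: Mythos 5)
Your overall architecture matches the paper's: invert the definition of $\theta^*_{\bff^\dag}$ and unfold the adelic Petersson pairing to the classical one, pass $\pairing_{H^0}$ to $\pairing_R$ via $\vol(\bar U_R,\rmd h_0)$, substitute Corollary \ref{C:formulaa}, and reduce to a measure comparison. However, you leave the heart of the argument as a ``careful place-by-place calculation'' without carrying it out, and your sketch of that calculation misattributes the key $2$-power. Specifically, your proposed final identity, once your $c_f$ is pinned down, amounts to $\vol(\bar U_R,\rmd h_0)/\vol(\cU,\rmd h)=2^{4r_{F,2}-r_F}$; but the $2^{4r_{F,2}}$ does \emph{not} arise from comparing the Eichler order with the Siegel parahoric at ramified primes. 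It is produced upstream, inside the local zeta integral at each $p\mid\Delta_F$ (Proposition \ref{P:ramified.loc} contributes $|2^{-4}\Delta_F^3|_p$, and $\prod_{p\mid\Delta_F}|2^{-4}|_p=2^{4r_{F,2}}$), and is already part of the constant in Corollary \ref{C:formulaa}. The measure ratio itself is $\vol(\bar U_R,\rmd h_0)/\vol(\cU,\rmd h)=2^{-r_F}$.

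Concretely, the two pieces you leave unproved are done quite differently in the paper. For the symplectic normalization constant the paper does not do a place-by-place volume computation of $K_p$: it uses three closed-form global facts — the Tamagawa number $\tau(\mathrm{PGSp}_4)=2$, Siegel's volume formula $\vol(\Sp_4(\Z)\backslash\frakH_2)=2\zeta(2)\zeta(4)$, and the index $[\Sp_4(\Z):\Gamma_0^{(2)}(N_F)]=N_F^3\prod_{p\mid N_F}\frac{1-p^{-4}}{1-p^{-1}}$ — to pin down the constant in a single stroke, together with Lemma \ref{L:5.I}. For the orthogonal measure comparison the paper works with rational invariant top forms $\om_{H^0}$ and its pull-back $\om_{H^0_1}$ along the isogeny $H^0_1\to H^0/Z_{H^0}$, and computes
$\vol(\bar U_{R_v},\rmd h^t_{0,v})/\vol(\cU_v,\rmd h^t_v)=2^{-1}[\mathrm{N}_{F/\Q}(\mathrm{n}(R_v^\x)):(\Z_v^\x)^2]$
place by place (giving $1/2$ at $\infty$ and at odd ramified primes, $2$ at $v=2$ unramified, $1$ otherwise), yielding the product $2^{-r_F}$. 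Without this explicit isogeny comparison your proposal does not close; and as noted above, your expected answer for this ratio ($2^{4r_{F,2}-r_F}$) is off by $2^{4r_{F,2}}$ because that factor is accounted for elsewhere.
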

\begin{proof}We recall some facts:
\begin{itemize}
\item the Tamagwa number $\tau({\rm PGSp}_4)=\tau(\SO(3,2))=2$, \item $\vol(\Sp_4(\Z)\backslash {\frakH}_2,\frac{\rmd X\rmd Y}{(\det Y)^{3}}) = 2\zeta(2)\zeta(4)$ (\cite[Theorem 11]{si43}),
 \item $ [{\rm Sp}_4(\Z):\Gamma_0(N_F)] = N_F^3 \prod_{p\mid N_F}\frac{1-p^{-4}}{1-p^{-1}}$ (\cite[p114, (1)]{kl59}).\end{itemize}
The above combined with \lmref{L:5.I} yield 
\begin{align}\label{GAdtoCla}
  \frac{1}{\vol(\cU,\rmd h)^2}\cdot \rpair{\theta(\test,\bff^\dag)}{\theta(\test,\bff^\dag)}_G = \frac{(-1)^{k_2}}{2k_2+1}\cdot N_F^{-3} \prod_{p\mid N_F}\frac{1-p^{-1}}{1-p^{-4}} 
                        \cdot \frac{\pair{\theta^*_{\bff^\dag}}{\theta^*_{\bff^\dag}}_{\frakH_2}}{\zeta(2)\zeta(4)}.
\end{align}
Then we have
\begin{align*}
\pair{\bff^\dag}{\bff^\dag}_{H^0}&=\pair{\bff^\dag}{\bff^\dag}_{R}\cdot \vol(\bar U_R,\rmd h_0),
\end{align*}
so by \corref{C:formulaa}, we find that 
\begin{align*}
\frac{\pair{\theta_{\bff^\dag}^*}{\theta_{\bff^\dag}^*}_{\frakH_2}}{\pair{\bff^\dag}{\bff^\dag}_{R}}\cdot N_F^{-3}&=\frac{\vol(\bar U_R,\rmd h_0)\cdot 2^{\#\cP}\cdot L(1,{\rm As}^+(\pi))}{\vol(\cU,\rmd h)2^{2k_1+7}(2k_1+1)(2k_2+1)N_F^2 \Delta_F2^{-4r_{F,2}}}\cdot\prod_{p\divides \frakN}(1+\ep_p)\cdot\prod_{p\divides \Delta_F}(1+p^{-1}).
\end{align*}
Therefore, it remains to show that 
\[\vol(\bar U_R,\rmd h_0)=2^{-r_F}\cdot \vol(\cU,\rmd h)\]
for Tamagawa measures $\rmd h_0$ and $\rmd h$.

Following \cite[\S 8, p.279]{gi11}, let $\om_{H^0}$be a rational invariant differential top form on $H^0/Z_{H^0}$ and $\om_{H^0_1}$ be the pull-back of $\om_{H^0}$ by the natural isogeny $H^0_1\to H^0/Z_{H^0}$. For each place $v\in\Sigma_\Q$, let $\rmd^t h_{0,v}$ and $\rmd^t h_v$ be the measures on $H^0(\Q_v)$ and $H^0_1(\Q_v)$ induced by $\om_{H^0}$ and $\om_{H^0_1}$. Then $\rmd h_0=\prod_v\rmd^t h_{0,v}$ and $\rmd h=\prod_v \rmd^t h_v$ are Tamagawa measures on $H^0$ and $H^0_1$. We have
\[\frac{\vol(\bar U_{R_v},\rmd h^t_{0,v})}{\vol(\cU_v,\rmd h^t_v)}=2^{-1}[\rmN_{F/\Q}({\rm n}(R_v^\x)):(\Z_v^\x)^2]=\begin{cases}
1/2&\text{ if }v\ndivides 2,\,v\divides \infty \Delta_F,\\
2&\text{ if }v=2\ndivides \Delta_F,\\
1&\text{ otherwise.}
\end{cases}\]
 We see that $\vol(\bar U_R,\rmd h_0)=\vol(\cU,\rmd h)\cdot 2^{-r_F}$. 
\end{proof}
\section{The calculations of the local integrals}\label{LocComp}
\subsection{The local integral at the infinite place}
In this subsection, we evaluate the integral $\cI(\test_\infty)$ in \eqref{E:inftyloca}. Recall that if we define $P_\ulk:\bbH^{\oplus 2}\to\cW_\ulk(\C)\ot\cL_\kappa(\C)$ by
\[P_\ulk(x_1,x_2)=\sum_{\al=0}^{2k_2}P^\al_\ulk(x_1,x_2)\binom{2k_2}{\al}X^\al Y^{2k_2-\al},\]
where $P^\al_\ulk$ is the polynomial introduced in \eqref{E:vpalpha.def}, then 
\[\test_\infty(x_1,x_2)=e^{-2\pi({\rm n}(x_1)+{\rm n}(x_2))}\cdot P_\ulk(x_1,x_2).\]
\begin{lm}\label{L:1.loc} We have
\begin{align*}
        \int_{ {\rm SU}_2(\R)^2} 
          \langle P_\ulk(u),  P_\ulk(u)  \rangle_{\cW\ot\cL}\,   \rmd^* u
 =  &(-1)^{k_2} (2k_1+1)\cdot\frac{\Gamma(k_1+k_2+2)\Gamma(k_1-k_2+1)}{\Gamma(k_1+2)^2},
\end{align*}
where $\rmd^*u$ is the Haar measure on $\SU_2(\R)^2$ with $\vol(\SU_2(\R)^2)=1$.
\end{lm}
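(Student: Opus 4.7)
The plan is to exploit the $\SU_2\times \SU_2$-equivariance of $P_\ulk$ to reduce the six-dimensional integral to a one-variable integral, and then invoke a Beta function identity. The covariance recorded at the end of \secref{SS:testfunctions} gives $P_\ulk((a,d)\cdot x) = \tau_\ulk(a,d) P_\ulk(x)$ under the action of $\SU_2\times \SU_2$ on $\bfX_\infty = \bbH^{\oplus 2}$ by $(a,d)\cdot(x_1,x_2) = (ax_1d^{-1}, ax_2d^{-1})$. Since $\det = 1$ on $\SU_2$, the pairings $\pairing_\cW$ and $\pairing_\cL$ are preserved, so $\langle P_\ulk(u_1,u_2), P_\ulk(u_1,u_2)\rangle_{\cW\ot\cL}$ is $(\SU_2\times\SU_2)$-invariant. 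Taking $(a,d)=(1,u_2)$ shows the integrand depends only on $w := u_1u_2^{-1}$; by Fubini, $\int_{\SU_2(\R)^2}\langle P_\ulk(u),P_\ulk(u)\rangle\,\rmd^* u = \int_{\SU_2(\R)}\langle P_\ulk(v,1),P_\ulk(v,1)\rangle\,\rmd^* v$. Taking $(a,d)=(a,a)$ shows the remaining integrand is a class function on $\SU_2(\R)$, and Weyl's integration formula yields
\[
\int_{\SU_2(\R)^2}\langle P_\ulk(u),P_\ulk(u)\rangle_{\cW\ot\cL}\,\rmd^* u = \frac{2}{\pi}\int_0^\pi\langle P_\ulk(v_\theta,1),P_\ulk(v_\theta,1)\rangle_{\cW\ot\cL}\sin^2\theta\,\rmd\theta,
\]
where $v_\theta = \diag{e^{i\theta},e^{-i\theta}}$.

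At $(v_\theta,1)$ the defining polynomials specialize to $Q(v_\theta,1) = 2i\sin\theta\cdot X_1Y_1$ and $L(v_\theta)X+L(1)Y = aY_1X_2 - bX_1Y_2$ with $a = e^{-i\theta}X+Y$, $b = e^{i\theta}X+Y$; binomial expansion then gives
\[
P_\ulk(v_\theta,1) = (2i\sin\theta)^{k_1-k_2}\sum_{j=0}^{2k_2}\binom{2k_2}{j}(-1)^j a^{2k_2-j}b^j X_1^{k_1-k_2+j}Y_1^{k_1+k_2-j}X_2^{2k_2-j}Y_2^j.
\]
Expanding $\langle P_\ulk,P_\ulk\rangle_{\cW\ot\cL}$ term by term, the three pairings $\pairing_{2k_1},\pairing_{2k_2},\pairing_{2k_2}$ force $j+l=2k_2$; applying the symmetric-power identity $\langle\ell_1\cdots\ell_N,m_1\cdots m_N\rangle_N = \frac{1}{N!}\sum_{\sigma\in S_N}\prod_i(-\omega(\ell_i,m_{\sigma(i)}))$, with $\omega$ the standard symplectic form on $\C^2$, and the value $\omega(a,b)=-2i\sin\theta$, one obtains $\langle a^{2k_2-j}b^j, a^j b^{2k_2-j}\rangle_{2k_2} = \binom{2k_2}{j}^{-1}(2i\sin\theta)^{2k_2}(-1)^j$. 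After the binomial factors cancel,
\[
\langle P_\ulk(v_\theta,1),P_\ulk(v_\theta,1)\rangle_{\cW\ot\cL} = (2i\sin\theta)^{2k_1}(-1)^{k_1-k_2}\sum_{j=0}^{2k_2}(-1)^j\binom{2k_1}{k_1-k_2+j}^{-1}.
\]

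Substituting this back and using $\int_0^\pi\sin^{2k_1+2}\theta\,\rmd\theta = \pi\binom{2k_1+2}{k_1+1}/4^{k_1+1}$, the lemma reduces to the combinatorial identity
\[
\sum_{j=0}^{2k_2}(-1)^j\binom{2k_1}{k_1-k_2+j}^{-1} = \frac{(k_1-k_2)!\,(k_1+k_2+1)!}{(k_1+1)(2k_1)!},
\]
which follows from the Beta integral $\binom{N}{m}^{-1} = (N+1)\int_0^1 x^m(1-x)^{N-m}\,\rmd x$: inserting it with $N=2k_1$ and summing the resulting finite geometric series $\sum_{j=0}^{2k_2}(-x/(1-x))^j$, the integrand becomes, up to the overall sign $(-1)^{k_1-k_2}$, the expression $x^{k_1-k_2}(1-x)^{k_1+k_2+1} + x^{k_1+k_2+1}(1-x)^{k_1-k_2}$, whose two halves are equated by $x\mapsto 1-x$ and integrate to $B(k_1-k_2+1,k_1+k_2+2)$. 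The main obstacle is the bookkeeping of the middle-step pairing computation: three symmetric-power pairings of degrees $2k_1,2k_2,2k_2$ must be handled simultaneously, and the resulting sign factors and binomials must collapse cleanly to the stated alternating sum.
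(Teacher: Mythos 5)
Your proposal is correct and reaches the same final combinatorial identity as the paper, but by a noticeably cleaner intermediate route. Both arguments begin identically: $\SU_2\times\SU_2$-invariance of the integrand plus Weyl's integral formula reduce the integral over $\SU_2(\R)^2$ to a one-variable integral against $\sin^2\theta$ on the maximal torus. The divergence occurs in how the torus value is handled. The paper keeps the $\alpha$-indexed decomposition $P_\ulk=\sum_\alpha P_\ulk^\alpha\binom{2k_2}{\alpha}X^\alpha Y^{2k_2-\alpha}$, expands each $P_\ulk^\alpha(v_\theta,\bfone_2)$ into a double sum, assembles $\Psi(v_\theta,\bfone_2)$ as a quadruple sum over $\alpha,a,b,c$, integrates term by term, and then collapses the remaining $\alpha$-sum by recognizing it as a coefficient of $(1+X)^{2k_1+2}(Y-Z)^{2k_2}$. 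You instead resum over $\alpha$ \emph{first}: the binomial theorem absorbs the $\cL$-factor into the $2k_2$-th power $(aY_1\otimes X_2-bX_1\otimes Y_2)^{2k_2}$ with $a=e^{-\sqrt{-1}\theta}X+Y$, $b=e^{\sqrt{-1}\theta}X+Y$, so the three symmetric-power pairings immediately collapse (via the permanent-type formula $\langle\ell_1\cdots\ell_N,m_1\cdots m_N\rangle_N=\frac{1}{N!}\sum_\sigma\prod_i(-\omega(\ell_i,m_{\sigma(i)}))$ and $\omega(a,b)=-2\sqrt{-1}\sin\theta$) to a single alternating sum $\sum_{j=0}^{2k_2}(-1)^j\binom{2k_1}{k_1-k_2+j}^{-1}$. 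You then prove the closed form of this sum by the Beta integral representation $\binom{2k_1}{m}^{-1}=(2k_1+1)\int_0^1 x^m(1-x)^{2k_1-m}\,\rmd x$ and a finite geometric series, whereas the paper proves the equivalent identity $\sum_{b=-k_2}^{k_2}(-1)^{k_2+b}\binom{2k_1}{k_1+b}^{-1}=\frac{2k_1+1}{k_1+1}\binom{2k_1+1}{k_1-k_2}^{-1}$ by induction on $k_2$. Your route avoids the paper's quadruple sum and the generating-function coefficient extraction, and the Beta-integral proof of the binomial identity is self-contained and arguably more illuminating than "a simple induction argument." One trivial slip: after summing the geometric series the integrand is exactly $x^{k_1-k_2}(1-x)^{k_1+k_2+1}+x^{k_1+k_2+1}(1-x)^{k_1-k_2}$ with no extraneous factor of $(-1)^{k_1-k_2}$; the sign factors have already been accounted for upstream, and inserting such a sign would break the identity (test $k_1=1$, $k_2=0$). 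This does not affect your final answer, which is correct.
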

\begin{proof}Set $\Psi(u):=\pair{P_\ulk(u)}{P_\ulk(u)}_{\cW\ot\cL}$. We have
\[    \int_{ {\rm SU}_2(\R)^2}  \Psi(u)\rmd^* u 
    =\int_{\SU_2(\R)}\int_{\SU_2(\R)}\Psi(u_1,u_2) \rmd u_1\rmd u_2,\]
 where $\rmd u_1,\rmd u_2$ are the Haar measure on $\SU_2(\R)$ with $\vol(\SU_2(\R))=1$. 
 By \cite[Lemma 3.2]{hn15}, we find that 
\[\Psi(u_1,u_2)=\Psi(u_2^{-1}u_1,\bfone_2),\quad\Psi(u_1,\bfone_2)=\Psi(u_2^{-1}u_1u_2,\bfone_2).\]
It follows that \begin{align*}
 \int_{ {\rm SU}_2(\R)^2}  \Psi(u)\rmd^* u = & \int_{\SU_2(\R)}\int_{\SU_2(\R)} 
       \Psi(u_2^{-1}u_1,\bfone_2)\rmd u_1\rmd u_2  \\
 = &  \int_{ {\rm SU}_2(\R)}
           \Psi(u_1,1)\rmd u_1.
\end{align*}
Moreover, the function $u_1\mapsto \Psi(u_1,\bfone_2)$ is a class function on $\SU_2(\R)$, so by Weyl's integral formula, we obtain
\begin{align*}
 \int_{ {\rm SU}_2(\R)^2}  \Psi(u)\rmd^* u
=  \frac{1}{4\pi} \int^{2\pi}_0 
         \abs{e^{\sqrt{-1}\theta}-e^{-\sqrt{-1}\theta}}^2  \Psi(\pDII{e^{\sqrt{-1}\theta}}{e^{-\sqrt{-1}\theta}},\bfone_2)\rmd \theta  .
\end{align*}
By definition \eqref{E:vpalpha.def}, we have\begin{align*}
    &P_\ulk^\al(\begin{pmatrix} e^{\sqrt{-1}\theta}  & \\ & e^{-\sqrt{-1}\theta}  \end{pmatrix}, \bfone_2 )  \\  \displaybreak[0]
 = & ( (e^{\sqrt{-1}\theta}  - e^{-\sqrt{-1}\theta }) X_1Y_1  )^{k_1-k_2}\cdot ( e^{-\sqrt{-1}\theta} Y_1\otimes X_2  - e^{\sqrt{-1}\theta} X_1\otimes Y_2 )^\alpha 
                                                 (  Y_1\otimes X_2  -  X_1\otimes Y_2 )^{2k_2-\alpha}  \\  \displaybreak[0]
 = & (e^{\sqrt{-1}\theta}  - e^{-\sqrt{-1}\theta})^{k_1-k_2}  (X_1Y_1  )^{k_1-k_2}  
\cdot\sum^\alpha_{a=0} \binom{\alpha}{a} (-1)^a e^{\sqrt{-1}\theta(a-\alpha+a)}    X^a_1Y^{\alpha-a}_1 \otimes X^{\alpha-a}_2Y^a_2   \\  \displaybreak[0]
    & \qquad \qquad \times   \sum^{2k_2-\alpha}_{b=0} \binom{2k_2-\alpha}{b} (-1)^b    X^b_1Y^{2k_2-\alpha-b}_1 \otimes X^{2k_2-\alpha-b}_2Y^b_2   \\  \displaybreak[0]
 = &  (e^{\sqrt{-1}\theta}  - e^{-\sqrt{-1}\theta})^{k_1-k_2}     \sum^\alpha_{a=0}  \sum^{2k_2-\alpha}_{b=0} 
             \binom{\alpha}{a}  \binom{2k_2-\alpha}{b} (-1)^{a+b}
       e^{\sqrt{-1}\theta(2a-\alpha)} X^{k_1-k_2+a+b}_1Y^{k_1+k_2-(a+b)}_1 \otimes X^{2k_2-(a+b)}_2Y^{a+b}_2.
 \end{align*}
From the above equation, we see that  \begin{align*}
 &  \Psi(\pDII{e^{\sqrt{-1}\theta}}{e^{-\sqrt{-1}\theta}},\bfone_2)\\
   = & (e^{\sqrt{-1}\theta}  - e^{-\sqrt{-1}\theta})^{2k_1-2k_2}     
     \sum^{2k_2}_{\alpha=0}  (-1)^\alpha \binom{2k_2}{\alpha} 
\sum^\alpha_{a=0}  \sum^{2k_2-\alpha}_{b=0}  \binom{\alpha}{a}  \binom{2k_2-\alpha}{b} (-1)^{a+b}  
                e^{\sqrt{-1}\theta(2a-\alpha) }    \\
     & \times \sum_{c+d=2k_2-a-b}   \binom{2k_2-\alpha}{c}  \binom{\alpha}{d} (-1)^{c+d}
                e^{\sqrt{-1}\theta(2c-(2k_2-\alpha)) }\cdot  \frac{(-1)^{k_1-k_2+a+b}}{\binom{2k_1}{k_1-k_2+a+b} }   \frac{(-1)^{a+b}}{\binom{2k_2}{a+b} }             \\  
 = &(-1)^{k_1-k_2}(e^{\sqrt{-1}\theta}  - e^{-\sqrt{-1}\theta})^{2k_1-2k_2}     
     \times \sum^{2k_2}_{\alpha=0}  (-1)^\alpha \binom{2k_2}{\alpha}   \\  \displaybreak[0]
   & \times \sum_{a,b,c}    \binom{\alpha}{a}  \binom{2k_2-\alpha}{b}  \binom{2k_2-\alpha}{c}  \binom{\alpha}{2k_2-a-b-c}
                                    \binom{2k_1}{k_1-k_2+a+b}^{-1}  \binom{2k_2}{a+b}^{-1}   
      \cdot e^{\sqrt{-1}\theta(2a+2c-2k_2) } .
\end{align*}
By the formula
\begin{align*}
    \int^{2\pi}_0\abs{e^{\sqrt{-1}\theta}-e^{-\sqrt{-1}\theta}}^2
       (e^{\sqrt{-1}\theta}  - e^{-\sqrt{-1}\theta})^K e^{ \sqrt{-1}\theta A}  \rmd \theta
=  & 2\pi (-1)^\frac{K+A}{2} \binom{K+2}{\frac{K+A+2}{2}} 
\end{align*}
for even integers $K$ and $A$, we find that
\begin{align*}
&    \frac{1}{2\pi} \int^{2\pi}_0 
         \abs{e^{\sqrt{-1}\theta}-e^{-\sqrt{-1}\theta}}^2\Psi(\pDII{e^{\sqrt{-1}\theta}}{e^{-\sqrt{-1}\theta}},\bfone_2)\rmd \theta\\
         = &(-1)^{k_1-k_2}      
     \sum^{2k_2}_{\alpha=0}  (-1)^\alpha \binom{2k_2}{\alpha}  \sum_{a,b,c}    \binom{\alpha}{a}  \binom{2k_2-\alpha}{b}  \binom{2k_2-\alpha}{c}  \binom{\alpha}{2k_2-a-b-c}\\
     &\times
  \binom{2k_1}{k_1-k_2+a+b}^{-1}  \binom{2k_2}{a+b}^{-1}  (-1)^{k_1+a+c} \binom{2k_1-2k_2+2}{k_1-2k_2+a+c+1}\\
\stackrel{b\mapsto b-a+k_2}{=} &(-1)^{k_1-k_2}\sum^{2k_2}_{\alpha=0}  (-1)^\alpha \binom{2k_2}{\alpha} \sum_{b=-k_2}^{k_2}\sum_{a,c} (-1)^{a+c}   \binom{\alpha}{a}  \binom{2k_2-\alpha}{k_2+b-a}  \binom{2k_2-\alpha}{c}  \binom{\alpha}{k_2-b-c}  \\  \displaybreak[0]
   & \times  \binom{2k_1}{k_1+b}^{-1}  \binom{2k_2}{k_2+b}^{-1}  \binom{2k_1-2k_2+2}{k_1-2k_2+a+c+1}\\
   =&(-1)^{k_2}\sum_{b=-k_2}^{k_2} \binom{2k_1}{k_1+b}^{-1}  \binom{2k_2}{k_2+b}^{-1} \sum^{2k_2}_{\alpha=0} (-1)^\alpha\binom{2k_2}{\alpha}T^\al_b,
\end{align*}
where 
\[  T^\alpha_b =\sum_{a,c} (-1)^{a+c} \binom{\alpha}{a} \binom{2k_2-\alpha}{k_2+b-a}\binom{2k_2-\alpha}{c}\binom{\alpha}{k_2-b-c}\binom{2k_1-2k_2+2}{k_1-2k_2+a+c+1}.\]
Note that $T^\alpha_b$ is equal to the coefficient  of $X^{k_1+1}Y^{k_2-b}Z^{k_2+b}$ of the following polynomial \begin{align*}
  F^\alpha(X,Y,Z) :=(1-Z)^\alpha(1+XZ)^{2k_2-\alpha}   (1-Y)^{2k_2-\alpha}(1+XY)^\alpha  (1+X)^{2k_1-2k_2+2},
\end{align*}
so we obtain the identity 
\[\sum^{2k_2}_{\alpha=0} (-1)^\alpha\binom{2k_2}{\alpha}T^\al_b=\binom{2k_1+2}{k_1+1}\binom{2k_2}{k_1+b}\]
by looking at the coefficient of $X^{k_1+1}Y^{k_2-b}Z^{k_2+b}$ of the polynomial \begin{align*}
  &\sum^{2k_2}_{\alpha=0} (-1)^\alpha \binom{2k_2}{\alpha} F^\alpha(X,Y,Z)  \\
 =& (1+X)^{2k_1-2k_2+2} \sum^{2k_2}_{\alpha=1} (-1)^\alpha\binom{2k_2}{\alpha} (1+XY-Z-ZXY)^\alpha (1+XZ-Y-XYZ)^{2k_2-\alpha}  \\  \displaybreak[0]
 =& (1+X)^{2k_1+2} (Y-Z)^{2k_2}.
\end{align*}
Summarizing the above calculations, we obtain 
\beq\label{E:7.I}\begin{aligned}\int_{ {\rm SU}_2(\R)^2}  \Psi(u)\rmd^* u =&(-1)^{k_2} 2^{-1}\sum_{b=-k_2}^{k_2} \binom{2k_1}{k_1+b}^{-1}  \binom{2k_2}{k_2+b}^{-1} \sum^{2k_2}_{\alpha=0} (-1)^\alpha\binom{2k_2}{\alpha}T^\al_b\\
=& (-1)^{k_2}2^{-1}\binom{2k_1+2}{k_1+1}\cdot\sum_{b=-k_2}^{k_2} (-1)^{k_2+b} \binom{2k_1}{k_1+b}^{-1}  
\end{aligned}\eeq
A simple induction argument shows that  for any $0\leq k_2\leq k_1$, we have
\beq\label{E:8.I}\sum^{k_2}_{b=-k_2} (-1)^{k_2+b} \binom{2k_1}{k_1+b}^{-1}= \frac{2k_1+1}{k_1+1} \cdot \binom{2k_1+1}{k_1-k_2}^{-1}.
\eeq
It is clear that \eqref{E:7.I} and \eqref{E:8.I} yield the lemma.
\end{proof}

\begin{prop}\label{locinfty}
We have
\begin{align*}
     \cI(\test_\infty)=&  \int_{\bfX_\infty} \pair{\test_\infty(x)}{\test_\infty(x)}_{\cW\ot\cL}   \rmd x\\
     =&  \frac{(-1)^{k_2} (2k_1+1)}{2^{2k_1+7}  } \cdot \frac{ \Gamma_\C(k_1+k_2+2) \cdot  \Gamma_\C(k_1-k_2+1)}{\Gamma_\R(2)\Gamma_\R(4)}
\end{align*}
\end{prop}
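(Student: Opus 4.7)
The plan is to evaluate $\cI(\test_\infty)$ by a direct polar-coordinate computation on $\bfX_\infty=\bbH^{\oplus 2}$, separating radial Gaussians from an $\SU_2(\R)^2$-integral already computed in \lmref{L:1.loc}. First I would unfold the Hermitian pairing on $\cL_\kappa(\C)$: using $\pair{X^\alpha Y^{2k_2-\alpha}}{X^\beta Y^{2k_2-\beta}}_{2k_2}=(-1)^\alpha\binom{2k_2}{\alpha}^{-1}\delta_{\alpha+\beta,2k_2}$ together with \eqref{infinitetest}, one checks that
\begin{align*}
\pair{\test_\infty(x)}{\test_\infty(x)}_{\cW\ot\cL}= e^{-4\pi({\rm n}(x_1)+{\rm n}(x_2))}\sum_{\alpha=0}^{2k_2}(-1)^\alpha\binom{2k_2}{\alpha}\pair{P^\alpha_\ulk(x_1,x_2)}{P^{2k_2-\alpha}_\ulk(x_1,x_2)}_\cW.
\end{align*}
Inspection of \eqref{E:vpalpha.def} shows each $P^\alpha_\ulk$ is bihomogeneous of bidegree $(k_1-k_2+\alpha,\,k_1+k_2-\alpha)$ in $(x_1,x_2)$, so the product $P^\alpha_\ulk\cdot P^{2k_2-\alpha}_\ulk$ is bihomogeneous of bidegree $(2k_1,2k_1)$, independent of $\alpha$.

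Next I would introduce polar coordinates on each copy of $\bbH\cong\R^4$: write $x_i=r_iu_i$ with $r_i={\rm n}(x_i)^{1/2}\in\R_{\geq 0}$ and $u_i\in\SU_2(\R)\cong S^3$. The self-dual Lebesgue measure factors as $\rmd x_i=2\pi^2 r_i^3\,\rmd r_i\,\rmd u_i$, where $\rmd u_i$ is the probability Haar measure on $\SU_2(\R)$ and $2\pi^2$ is the volume of the unit $3$-sphere. By the bihomogeneity above, $\pair{P^\alpha_\ulk(r_1u_1,r_2u_2)}{P^{2k_2-\alpha}_\ulk(r_1u_1,r_2u_2)}_\cW=r_1^{2k_1}r_2^{2k_1}\pair{P^\alpha_\ulk(u_1,u_2)}{P^{2k_2-\alpha}_\ulk(u_1,u_2)}_\cW$, so the integral splits as the product of two radial Gaussians and an angular integral:
\begin{align*}
\int_0^\infty e^{-4\pi r^2}r^{2k_1+3}\rmd r=\tfrac{1}{2}(4\pi)^{-k_1-2}\Gamma(k_1+2),
\end{align*}
and the angular integral equals $\int_{\SU_2(\R)^2}\pair{P_\ulk(u)}{P_\ulk(u)}_{\cW\ot\cL}\rmd^* u$ after reassembling the $\alpha$-sum as the full pairing on $\cL_\kappa(\C)$.

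Finally I would apply \lmref{L:1.loc} to the angular integral to obtain $(-1)^{k_2}(2k_1+1)\Gamma(k_1+k_2+2)\Gamma(k_1-k_2+1)/\Gamma(k_1+2)^2$, then assemble:
\begin{align*}
\cI(\test_\infty)=(2\pi^2)^2\cdot\frac{\Gamma(k_1+2)^2}{4(4\pi)^{2k_1+4}}\cdot\frac{(-1)^{k_2}(2k_1+1)\Gamma(k_1+k_2+2)\Gamma(k_1-k_2+1)}{\Gamma(k_1+2)^2}.
\end{align*}
Simplifying yields $(-1)^{k_2}(2k_1+1)2^{-4k_1-8}\pi^{-2k_1}\Gamma(k_1+k_2+2)\Gamma(k_1-k_2+1)$, which matches the claimed formula after translating via $\Gamma_\C(s)=2(2\pi)^{-s}\Gamma(s)$ and $\Gamma_\R(s)=\pi^{-s/2}\Gamma(s/2)$, using $\Gamma_\R(2)\Gamma_\R(4)=\pi^{-3}$. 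The only real difficulty lies in the angular identity (already proved in \lmref{L:1.loc}); the remaining work is just the bookkeeping of powers of $2$ and $\pi$, which I expect to be entirely routine.
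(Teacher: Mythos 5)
Your proposal is correct and follows essentially the same route as the paper's own proof: polar coordinates on $\bbH^{\oplus 2}$ to split the Gaussian radial factor from the $\SU_2(\R)^2$-integral, the degree-$(2k_1,2k_1)$ bihomogeneity of the paired polynomials to pull out $(r_1r_2)^{2k_1}$, the standard radial Gaussian moment $\int_0^\infty r^{2k_1+3}e^{-4\pi r^2}\rmd r=\tfrac12(4\pi)^{-k_1-2}\Gamma(k_1+2)$, and \lmref{L:1.loc} for the angular integral. The paper expresses the homogeneity step via a $\rho_{(2k_2,0)}(\pDII{r_1}{r_2})$-equivariance remark whereas you check bidegrees directly from \eqref{E:vpalpha.def}, but these are the same observation, and your final bookkeeping of powers of $2$ and $\pi$ agrees with the stated formula.
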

\begin{proof}
For $x\in (D_\infty^\x)^2$, we write $x=r\cdot u$ with $r=(r_1,r_2)\in(\R_+)^2$ and $u=(u_1,u_2)\in \SU_2(\R)^2$. Then the Haar measue $\rmd x$ is given by $(r_1r_2)^3\rmd r\rmd u$, where $\rmd r=\rmd r_1\rmd r_2$ is the Lebesque measure on $\R_+\x\R_+$ and $\rmd u=\rmd u_1\rmd u_2$ is the Haar measure on $\SU_2(\R)^2$ with $\vol(\SU_2(\R)^2)=4\pi^4$. We have \begin{align*}
      \int_{\bfX_\infty}  \langle \test_\infty(x), \test_\infty(x)  \rangle_{\cW\ot\cL} \rmd x 
= &  \int_{D^{\times 2}_\infty} \langle \test_\infty(x), \test_\infty(x)  \rangle_{\cW\ot\cL}  \rmd x\\ 
= &  \int_0^\infty\int_0^\infty\int_{\SU_2(\R)^2}  \pair{\test_\infty(r\cdot u)}{\test_\infty(r\cdot u )}(r_1r_2)^3 \rmd r\rmd u.
\end{align*}
Note that \begin{align*}
\pair{ \test_\infty(r\cdot u)}{ \test_\infty(r\cdot u)}_{ \cW\ot\cL }
=& \pair{\rho_{(2k_2,0)}(\pDII{r_1}{r_2})\test_\infty(u)}{\rho_{(2k_2,0)}(\pDII{r_1}{r_2})\test_\infty(u)}_{\cW\ot\cL}\\
=& (r_1r_2)^{2k_1} e^{-4\pi (r_1^2+r_2^2)} \pair{P_\ulk(u)}{P_\ulk(u)}_{\cW\ot\cL}.
\end{align*}
Hence by \lmref{L:1.loc}, we see that\begin{align*}
&\int_{\bfX_\infty}  \langle \test_\infty(x), \test_\infty(x)  \rangle_{\cW\ot\cL} \rmd x \\
=&  \left(\int^\infty_{0} r_1^{2k_1+3} e^{-4\pi r_1^2}\rmd r_1\right)^2\cdot (4\pi^4)\cdot \int_{{\rm SU}_2(\R)^2} \pair{P_\ulk(u)}{P_\ulk(u)}_{\cW\ot\cL} \rmd^* u\\ 
=&\left( 1/2\cdot(4\pi)^{-k_1-2}\Gamma(k_1+2)\right)^2\cdot(4\pi^4)\cdot  (-1)^{k_2} \cdot   (2k_1+1) \cdot\frac{\Gamma(k_1+k_2+2)\Gamma(k_1-k_2+1)}{\Gamma(k_1+2)^2}\\
= & (-1)^{k_2} \frac{\pi^3 }{2^{2k_1+7} }\cdot (2k_1+1) \cdot 2(2\pi)^{-k_1-k_2-2} \Gamma(k_1+k_2+2) \cdot 2(2\pi)^{-k_1+k_2-1} \Gamma(k_1-k_2+1)  \\  \displaybreak[0]
= & (-1)^{k_2} \frac{1}{2^{2k_1+7}  } \cdot (2k_1+1) \cdot \frac{ \Gamma_\C(k_1+k_2+2) \cdot  \Gamma_\C(k_1-k_2+1)}{\Gamma_\R(2)\Gamma_\R(4)}.   
\end{align*}
This finishes the proof of the proposition.\end{proof}

\subsection{Local integrals at finite places: preliminary}
In the following two subsections 6.3 and 6.4, we let $\q$ be a rational prime and calculate the local zeta integral 
\[ \cZ_\q(\test_\q,f_\q^\dagger)=\int_{H^0_1(\Q_\q)} 
         {\mathcal B}_{\omega_\q}(\omega_\q(h_\q)\test_\q, \test_\q)
         {\mathcal B}_{\piHo_\q}( \piHo_\q(h_\q)f_\q^\dag, f_\q^\dag) \rmd h_\q.\]
 To simply the notation, we often omit the subscript $\q$. For example, we write $D_0$, $F$, $\om$, $\test$, $f^\dagger$, $h$ for $D_0\ot\Qp$, $F\ot\Qp$, $\om_\q$, $\test_\q$, $f^\dagger_\q$ and $h_\q$. Let $\cU_p=H^0_1(\Qp)\cap ((R\ot\Zp)^\x\times \Zp^\x)/\cO_{F_p}^\x)$ be the local component of the open-compact subgroup $\cU$ defined in \eqref{E:opcpt}. One verifies that $\test$ and $f^\dagger$ are $\cU_p$-invariant, and hence we have \beq\label{E:1.loc}\cZ_p(\test,f^\dag)=\sum_{h}\cB_\om(\om(h)\test,\test)\cB_\piHo(\piHo(h)f^\dag,f^\dag)\cdot\vol(\cU_p h\cU_p),
\eeq
where $h$ runs over a complete set of representatives of the double coset space $\cU_p\bksl H^0_1(\Qp)/\cU_p$.
\subsection{Local integrals at finite places: the split case}
In this subsection, we suppose that $\q=\frakp\frakp^c$ is split in $F$. We shall identity $H^0(\Q_\q)$ with $(D_0^\x\times D_0^\x)/\Q_\q^\x$ with respect to $\frakp$ as in \remref{R:split}.  
First we treat the case $\q\ndivides N^-$. Then $D_0={\rm M}_2(\Q_\q)$ and $H^0(\Q_\q)=(\GL_2(\Q_\q)\times\GL_2(\Q_\q))/\Q_\q^\x$. We have $\piHo=\pi_\frakp\boxtimes\pi_{\frakp^c}$, where $\pi_\frakp$ and $\pi_{\frakp^c}$ are admissible and irreducible representations of $\PGL_2(\Q_\q)$. Then $f^0=f_\frakp^0\ot f_{\frakp^c}^0$, where $f_\frakp^0$ and $f_{\frakp^c}^0$ are new vectors of $\pi_\frakp$ and $\pi_{\frakp^c}$. For $\pi=\pi_\frakp$ or $\pi_{\frakp^c}$ and $f_?^0=f^0_{\frakp}$ or $f^0_{\frakp^c}$, let $\cB_{\pi}:\pi\ot\bar\pi\to\C$ be the $\GL_2(\Qp)$-invariant pairing such that $\cB_{\pi}(f_?^0,f_?^0)=1$. We thus have
\[\cB_{\piHo}(a_1\ot a_2,b_1\ot b_2)=\cB_{\pi_\frakp}(a_1,b_1)\cB_{\pi_{\frakp^c}}(a_2,b_2).\]
In the case $p\divides N^+$,  we shall assume $\frakp^c\divides\frakN^+$. Thus $\pi_{\frakp^c}\iso{\rm St}\ot(\chi_2\circ\det)$ is a special representation associated with a unramified quadratic character $\chi_2:\Q_\q^\x\to\stt{\pm 1}$ and let $\e\in\stt{\pm 1}$ be the sign given by
\[\e:=\begin{cases}1&\text{ if }\pi_\frakp\text{ is spherical},\\
\e_\frakp\e_{\frakp^c}&\text{ if }\pi_\frakp\text{ is special}.\end{cases}\]
Here we recall that $\e_\frakp$ and $\e_{\frakp^c}$ are the Atkin-Lehner eigenvalues of $\newform$ at $\frakp$ and $\frakp^c$ in \eqref{E:Atkineigenvalue}. Note that $\chi_2(p)=-\e_{\frakp^c}$ (\cf\cite[Proposition 3.1.2]{schmidt02}).
\begin{prop}\label{P:splita.loc}Suppose that $p\ndivides N^-$ is split in $F$. If $p\ndivides N^+$, then 
\begin{align*}\cZ_p(\test,f^\dag)=&\vol(\cU_p)\cdot\frac{L(1,{\rm As}^+(\pi))}{\zeta_p(2)\zeta_p(4)},\\
\intertext{ and if $p\divides N^+$}
\cZ_p(\test,f^\dag)=&\vol(\cU_p)\cdot p^{-2}(1+\e)\cdot \frac{ L(1, {\rm As}^+(\pi_p) )}{ \zeta_\q(1)\zeta_\q(2)}\cdot \cB_{\piHo}(f^\dag,f^\dag).
\end{align*}
\end{prop}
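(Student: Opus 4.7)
My plan is to use \remref{R:split} to identify $H^0(\Q_p)=(\GL_2(\Q_p)\x\GL_2(\Q_p))/\Q_p^\x$, $V_p\cong M_2(\Q_p)$ with quadratic form $\det$, and $\varrho(a,d)x=axd^{-1}$. Under this identification $\piHo_p=\pi_\frakp\boxt\pi_{\frakp^c}$ and $\cB_{\piHo}$ factors as a product of the $\GL_2$-pairings, while for $(a,d)\in H^0_1(\Q_p)$ (where $\det a=\det d$) the Weil representation acts by pullback $\omega_p(a,d)\test(x_1,x_2)=\test(a^{-1}x_1d,a^{-1}x_2d)$. With $\test_p={\mathbb I}_{L_p\oplus L_p}$ the theta matrix coefficient becomes a squared lattice volume $\vol(L_p\cap aL_pd^{-1})^2$, and $\cZ_p(\test,f^\dag)$ takes the form of a double integral in $(a,d)$ against a product of $\GL_2$-matrix coefficients. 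By \defref{D:AsaiL}(1), $L(s,{\rm As}^+(\pi_p))=L(s,\pi_\frakp\ot\pi_{\frakp^c})$, so this integral is of Rankin--Selberg type.

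In the unramified case $p\nmid N^+$, all vectors are $\GL_2(\Z_p)\x\GL_2(\Z_p)$-spherical. Using the Cartan decomposition $\GL_2(\Q_p)=\coprod_n\GL_2(\Z_p){\rm diag}(\uf^n,1)\GL_2(\Z_p)$, the lattice volume $\vol(M_2(\Z_p)\cap aM_2(\Z_p)d^{-1})$ depends only on the Cartan parameter of $a^{-1}d$; summing $\vol(\cdot)^2$ against the Macdonald formula for the spherical matrix coefficient of $\pi_\frakp\ot\pi_{\frakp^c}$ yields the Rankin--Selberg Euler factor $L(1,\pi_\frakp\ot\pi_{\frakp^c})$ divided by $\zeta_p(2)\zeta_p(4)$, and a measure-matching between the Tamagawa measure on $H^0_1(\Q_p)$ and the product Haar measure produces the prefactor $\vol(\cU_p)$.

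In the ramified case $p\mid N^+$ with $\frakp^c\mid\frakN^+$, $\pi_{\frakp^c}$ is Steinberg twisted by $\chi_2$ and $f^0_{\frakp^c}$ is Iwahori-fixed; $L_p$ is the local Eichler order of level $\frakp^c$. When $p\in\cP$, the stabilized vector $f^\dag_p=f^0_p+\e_{\frakp^c}\cdot\piHo_p(\eta_\frakp)f^0_p$ from \eqref{E:levelraisingf} is used. I will expand $\cB_\piHo(\piHo_p(h)f^\dag_p,f^\dag_p)$ into four terms and translate $h$ by $\eta_\frakp^{\pm 1}$ in each cross term, so that all four terms become the Iwahori-level Rankin--Selberg integral for $(\pi_\frakp,\pi_{\frakp^c})$ modified by an Atkin--Lehner sign; the four signs then collapse to $(1+\e)$ via \eqref{E:Atkineigenvalue}. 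The Iwahori-level integral evaluates to $L(1,{\rm As}^+(\pi_p))/(\zeta_p(1)\zeta_p(2))$ with a $p^{-2}$ volume factor from the index $[\GL_2(\Z_p):{\rm Iw}_p]$, and $\cB_\piHo(f^\dag,f^\dag)$ appears upon collecting normalizations; when $p\notin\cP$ (both $\frakp,\frakp^c\mid\frakN^+$), $f^\dag_p=f^0_p$ and the same Iwahori-level computation applies directly. The main obstacle will be verifying that the four-term expansion recombines into exactly $(1+\e)\cdot\cB_\piHo(f^\dag,f^\dag)$, which requires a careful analysis of the Atkin--Lehner action on the Steinberg matrix coefficient at $\frakp^c$ and a case-split according to whether $\pi_\frakp$ is unramified or itself Steinberg; in either case the integral reduces to an explicit computation on a single $\GL_2(\Q_p)$ factor, which I expect to be tedious but elementary.
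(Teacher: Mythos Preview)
Your unramified plan is sound and matches the paper: one sums over Cartan representatives $h_{n,a}=(\pDII{p^{n+a}}{1},\pDII{p^n}{p^a})$, plugs in Macdonald's formula for the spherical matrix coefficient, and simplifies to $L(1,\pi_\frakp\ot\pi_{\frakp^c})/\zeta_p(2)\zeta_p(4)$.

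For $p\mid N^+$, however, your mechanism for producing $(1+\e)$ is wrong. The four-term expansion of $f^\dag$ only exists when $p\in\cP$, and there all four signs are $+1$, not summing to $(1+\e)=2$. Indeed, $\eta_\frakp\notin H^0_1(\Q_p)$, so you cannot translate by it; the correct substitution is $h\mapsto h\cdot(\eta,\eta)$ (which \emph{does} lie in $H^0_1$ and fixes $\test$ since $\eta R_0\eta^{-1}=R_0$), and then each cross term picks up $\e_{\frakp^c}$ from the $\frakp^c$-side and another $\e_{\frakp^c}$ from the definition, giving $+1$. So the expansion yields $4\cZ_p^{(0)}$ with $\cZ_p^{(0)}$ still an Iwahori-level integral you have not computed. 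Worse, when $p\notin\cP$ (both $\pi_\frakp,\pi_{\frakp^c}$ special) there is no expansion at all, yet $(1+\e_\frakp\e_{\frakp^c})$---which can vanish---still appears.

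The paper's source of $(1+\e)$ is different: one enumerates Iwahori double cosets $\{h_{n,a},\,w_1h_{n,a},\,w_2h_{n,a},\,w_1w_2h_{n,a}\}$ with $w_i$ the Weyl element in the $i$-th factor, and uses the uniform identity
\[
\Phi_{f^\dag_\frakp}(w\cdot\pDII{p^m}{1})=\e\cdot(-\chi_2(p))\cdot\Phi_{f^\dag_\frakp}(\pDII{p^{m-1}}{1}),
\]
valid whether $\pi_\frakp$ is spherical (with the stabilized vector) or special (with the newform). Comparing the $w_1$-block to the non-$w_1$-block then yields the factor $(1+\e)$; the remaining sum over $(n,a)$ is handled by direct evaluation using Macdonald's formula (spherical case) or the Steinberg matrix coefficient (special case). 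Your plan is missing precisely this $w$-identity, which is the actual engine behind $(1+\e)$ and also behind the appearance of $\cB_\piHo(f^\dag,f^\dag)$ in the spherical case.
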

\begin{proof} For $n,a\in\Z$, put
\[h_{n,a}=(\pDII{p^{n+a}}{1},\pDII{p^n}{p^a})\in H^0_1(\Qp).\]
Let \[t=p^{-1}.\] For $m\in\Z$, put
\[\bfc_1(m)=\cB_{\pi_\frakp}(\pi_{\frakp}(\pDII{p^m}{1}f^0_\frakp,f^0_\frakp);\quad 
\bfc_2(m)=\cB_{\pi_{\frakp^c}}(\pi_{\frakp^c}(\pDII{p^m}{1}f^0_{\frakp^c},f^0_{\frakp^c}).\]
It is well-known that  for any $\epsilon>0$, there exists a constant $C_\epsilon$ such that \beq\label{E:Rbound1}\abs{\bfc_i(m)}\leq C_\epsilon\cdot t^{\abs{m}(1/2-\epsilon)}\text{ for }i=1,2\eeq
by the Ramanujan conjecture.

\ul{Case (i) $p\ndivides N_F$:} In this case, $\pi_\frakp$ and $\pi_{\frakp^c}$ are both spherical. 
Let $R_0={\rm M}_2(\Zp)$. Then $\cU_p=(R_0^\x\times R_0^\x)/\Zp^\x$, $\test$ is the characteristic function of $R_0\oplus R_0$ and $f^\dag=f^0=f_\frakp^0\ot f_{\frakp^c}^0$ is the fixed new vector. By Cartan decomposition, the set 
\[\stt{h_{n,a}\mid n+a\geq 0,\,n-a\geq 0}\]
is a complete set of representatives of $\cU_p\bksl H_1^0(\Qp)/\cU_p$. One can verify that \begin{align*}\cB_{\om}(\om(h_{n,a})\test,\test)=&t^{2(\abs{n}+\abs{a})},\\
\cB_\piHo(\piHo(h_{n,a})f^0,f^0)=&\bfc_1(n+a)\bfc_2(n-a),\\ 
\#(R_0^\x\pDII{p^m}{1} R_0^\x/R_0^\x)=&\begin{cases}1&\text{ if }m=0,\\
t^{-\abs{m}}(1+t)&\text{ if }m\not =0.\end{cases}\end{align*}
From \eqref{E:1.loc} together with the above equations, we see that $\vol(\cU_p)^{-1}\cZ^0(\test,f^\dag)$ equals
\begin{align*}
&\sum_{n+a\geq 0,n-a\geq 0}t^{2(\abs{n}+\abs{a})}\cB_{\piHo}(\piHo(h_{n,a})f^0),f^0)\#(\cU_ph_{n,a}\cU_p/\cU_p)\\
=&1+\sum_{n\geq 1}t^{4n}(\bfc_1(2n)+\bfc_2(2n))t^{-2n}(1+t)\\
+&\sum_{n\geq a+1,a\geq 0}t^{2n+2a}\bfc_1(n+a)\bfc_2(n-a)t^{-2n}(1+t)^2
+\sum_{n\geq a+1,a\geq 1}t^{2n+2a}\bfc_1(n-a)\bfc_2(n+a)t^{-2n}(1+t)^2.
\end{align*}
Here the above series converges absolutely by \eqref{E:Rbound1}.
Suppose that $\pi_\frakp=\pi(\mu_1,\mu_1^{-1})$ and $\pi_{\frakp^c}=\pi(\mu_2,\mu_2^{-1})$. By Macdonald's formula (\cf\cite[Theorem 4.6.6]{bump97Grey}), for $i=1,2$ letting $\al_i=\mu_i(p)$ and $\beta_i=\mu_i(p)^{-1}=\al_i^{-1}$, we have
\beq\label{E:MC.loc}\bfc_i(m)=\frac{t^{\frac{\abs{m}}{2}}}{1+t}(\al_i^{\abs{m}}A_i-\beta_i^{\abs{m}}B_i),\eeq
where \[A_i=\frac{\al_i-\beta_i t}{\al_i-\beta_i};\quad B_1=\frac{\beta_i-\al_i t}{\al_i-\beta_i}.\]
Note that $\abs{\al_i}=\abs{\beta_i}=1$. Therefore, we obtain \begin{align*}
&\vol(\cU_p)^{-1}\cZ_p(\test,f^\dag)\\
=1+&\sum_{n\geq 1}t^{3n}(\al_1^{2n}A_1-\beta_1^{2n}B_1+\al_2^{2n}A_2-\beta_2^{2n}B_2)+\\
&+\sum_{n\geq 1,a\geq 0} t^{n+3a}(\al_1^{n+2a}\al_2^nA_1A_2-\al_2^n\beta_1^{n+2a}A_2B_1-\al_1^{n+2a}\beta_2^nA_1B_2+\beta_1^{n+2a}\beta_2^nB_1B_2)\\
&+\sum_{n\geq 1,a\geq 1} t^{n+3a}(\al_1^n\al_2^{n+2a}A_1A_2-\al_2^{n+2a}\beta_1^{n}A_2B_1-\al_1^{n}\beta_2^{n+2a}A_1B_2+\beta_1^{n}\beta_2^{n+2a}B_1B_2)\\
=1+&\frac{A_1\al_1^2t^3}{1-t^3\al_1^2}-\frac{B_1\beta_1^2t^3}{1-t^3\beta_1^2}+\frac{A_2\al_2^2t^3}{1-t^3\al_2^2}-\frac{B_2\beta_2^2t^3}{1-t^3\beta_2^2}\\
+&\frac{A_1A_2\al_1\al_2 t}{(1-\al_1^2 t^3)(1-\al_2\al_1t)}-\frac{A_2B_1\beta_1\al_2 t}{(1-\beta_1^2 t^3)(1-\beta_1\al_2t)}-\frac{A_1B_2\al_1\beta_2 t}{(1-\al_1^2 t^3)(1-\al_1\beta_2t)}+\frac{B_1B_2\beta_1\beta_2 t}{(1-\beta_1^2 t^3)(1-\beta_1\beta_2t)}\\
+&\frac{A_1A_2\al_1\al_2^3 t^4}{(1-\al_2^2 t^3)(1-\al_1\al_2t)}-\frac{A_2B_1\beta_1\al_2^3 t^4}{(1-\al_2^2 t^3)(1-\beta_1\al_2t)}-\frac{A_1B_2\al_1\beta_2^3 t^4}{(1-\beta_2^2 t^3)(1-\al_1\beta_2t)}+\frac{B_1B_2\beta_1\beta_2^3 t^4}{(1-\beta_2^2 t^3)(1-\beta_1\beta_2t)}.
\end{align*}
We use \emph{Mathematica} to factor the above rational expression and find that
\[\vol(\cU_p)^{-1}\cdot\cZ_p(\test,f^\dag)=\frac{(1-t^4)(1-t^2)}{(1-\al_1\al_2t)(1-\beta_1\al_2t)(1-\al_1\beta_2 t)(1-\beta_1\beta_2 t)}=\frac{L(1,\pi_\frakp\ot\pi_{\frakp^c})}{\zeta_p(2)\zeta_p(4)}.\]
This completes the proof of the case (i).

\ul{Case (ii) $p\divides N^+$:} In this case, $\pi_{\frakp^c}={\rm St}\ot(\chi_2\circ\det)$ is special. Let $R_0$ be the standard Eichler order of level $p$ in ${\rm M}_2(\Zp)$ given by 
\[R_0=\stt{g\in{\rm M}_2(\Zp)\mid g\con\pMX{*}{*}{0}{*}\pmod{p\Zp}}.\]
Then $\cU_p=H^0_1(\Qp)\cap (R_0^\x\x R_0^\x)/\Zp^\x$, $\test$ is the characteristic function of $R_0\oplus R_0$ and $f^\dag=f_\frakp^\dag\ot f_{\frakp^c}^0$, where 
\beq\label{E:deffdag}f_\frakp^\dagger=\begin{cases}f_\frakp^0-\chi_2(p)\cdot \pi_\frakp(\pMX{0}{1}{p}{0})f_\frakp^0&\text{ if }\frakp\ndivides \frakN^+\iff \q\in\cP,\\
f_\frakp^0&\text{ if }\frakp\divides\frakN^+\iff\q\not\in\cP.
\end{cases}\eeq
Here $\cP$ is the set defined in \eqref{E:badprime}. Let $w\in\GL_2(\Qp)$, $w_1, w_2$ in $H_1^0(\Qp)$ be given by  
\begin{align*}
  w = \begin{pmatrix} & -1 \\ 1 &  \end{pmatrix}, \quad 
  w_1 = (w, \bfone_2);  \quad
  w_2 = (\bfone_2, w).  
\end{align*}
Then one can verify directly that the set
\begin{align*}
\Xi:=\stt{\,h_{n,a},\,w_1 h_{n,a},\,w_2 h_{n,a},\,w_1w_2h_{n,a}\,}_{n,a\in\Z} 
\end{align*}
is a complete set of representatives of $\cU_p\bksl H_1^0(\Qp)/\cU_p$. For integers $a,b,c,d$, put
\[\bfS_{a,b,c,d}:=\stt{\pMX{x}{y}{z}{w}\in{\rm M}_2(\Zp)\mid x\in p^a\Zp,y\in p^b\Zp, z\in p\Zp\cap p^c\Zp,w\in p^d\Zp}.\]
A direct computation shows that

\beq\label{E:2.loc}\begin{aligned}
{\mathcal B}_\om( \om(h_{n,a})\test,\test)=&\vol(\bfS_{a,n,1-n,-a})^2= t^{2|n|+ 2|a| + 2 },\\
{\mathcal B}_\om(\om( w_1 h_{n,a)}\test,\test) =& \vol(\bfS_{1-n,-a,a,n})^2=t^{2|n-\frac{1}{2}| +2|a-\frac{1}{2}| +2 },\\
{\mathcal B}_\om( \om(w_2 h_{n,a} )\test,\test) =&\vol(\bfS_{n,a,-a,1-n})^2 = t^{2|n-\frac{1}{2}| +2|a+\frac{1}{2}| +2 },\\
{\mathcal B}_\om( \om(w_1w_2 h_{n,a})\test,\test) =& \vol(\bfS_{-a,1-n,n,a})^2=t^{2|n-1| +2|a| + 2 }.
\end{aligned}\eeq
Next we consider $\cB_{\piHo}(\piHo(h_{n,a})f^\dag,f^\dag)$. For $\pi=\pi_\frakp$ or $\pi_{\frakp^c}$ and any $f\in\pi$, we put 
\begin{align*}
\Phi_{f}(g)=&\cB_{\pi}(\pi(g)f,f)\cdot\#(\cU_p g \cU_p/\cU_p).\end{align*}
For $h=(g_1,g_2)\in H^0_1(\Qp)$, we have
\[\cB_\piHo(\piHo(h)f^\dag,f^\dag)\cdot \vol(\cU_p h \cU_p)=\vol(\cU_p)\cdot\Phi_{f_\frakp^\dagger}(g_1)\cdot\Phi_{f_{\frakp^c}^0}(g_2).\]
If $\pi={\rm St}\ot(\chi\circ\det)$ is special, it is well known that \begin{align}\label{matcoeffSpSt}
    \Phi_{f^0}( \begin{pmatrix} p^m & \\  &  1  \end{pmatrix})  = 
    \chi(p)^m;\quad 
     \Phi_{f^0}( w\begin{pmatrix} p^m & \\  &  1  \end{pmatrix}) = 
     -\chi(p)^m\quad (w=\pMX{0}{1}{-1}{0}). 
\end{align}
By the definition of $f_\frakp^\dag$ in \eqref{E:deffdag}, it is straightforward to verify that \beq\label{E:3.loc}\begin{aligned}
  \Phi_{f_\frakp^\dagger}(\pDII{p^m}{1})=& \Phi_{f_\frakp^\dagger}(\pDII{p^{\abs{m}}}{1}),\\
 \Phi_{f_\frakp^\dagger}(w\pDII{p^m}{1})=&\e\cdot (-\chi_2(p))\cdot\Phi_{f_\frakp^\dagger}(\pDII{p^{m-1}}{1}) .
\end{aligned}\eeq

Let $\al_2:=\chi_2(p)$. By \eqref{E:1.loc} and \eqref{E:2.loc}, we have
\begin{align*}
&\vol(\cU_p)^{-1}\cdot\cZ_v^0(\test,f^\dagger)\\
=  & \sum_{\ep_1,\ep_2\in \stt{0,1} } 
     \sum_{n,a\in \Z} \cB_\om(\om(w^{\ep_1}, w^{\ep_2})h_{n,a})\test, \test) 
                           \Phi_{f_1^\dagger}(w^{\ep_1} \begin{pmatrix} p^{n + a} & \\ & 1 \end{pmatrix})  
                           \Phi_{f^0_2}(w^{\ep_2} \begin{pmatrix} p^{n} & \\ & p^{a} \end{pmatrix})    \\
=  &\sum_{\ep_1\in\stt{0,1}}\sum_{  n,a\in \Z} 
    ( \cB_\om(\om((w^{\ep_1}, \bfone_2)h_{n,a})\test, \test) - \cB_\om((\om(w^{\ep_1}, w)h_{n,a})\varphi, \varphi) )
                           \Phi_{f_1^\dagger}(w^{\ep_1}\pDII{p^{n+a}}{1})  
                           \alpha^{n-a}_2   \\
= &  \sum_{  n,a\in \Z} 
    ( t^{2|n|+2|a|+2} - t^{2|n-\frac{1}{2}| +2|a+\frac{1}{2}|  +2} )
                           \Phi_{f_1^\dagger}( \pDII{p^{n + a}}{1})  
                           \alpha^{n-a}_2   \\
   &  +  \sum_{  n,a\in \Z} 
    ( t^{2|n-\frac{1}{2}|+2|a-\frac{1}{2}|+2} - t^{2|n-1| +2|a|  +2} )
                          \Phi_{f_1^\dagger}(w \pDII{p^{n + a}}{1})  
                           \alpha^{n-a}_2. 
\end{align*}
Note that the absolute convergence of the above series follows from the Ramanujan conjecture. By \eqref{E:3.loc}, the first summation of the above equation is given by  
\begin{align*}
 &  \sum_{  n , a\in \Z} 
    ( t^{2|n|+2|a|+2} - t^{2|n-\frac{1}{2}| +2|a+\frac{1}{2}|  +2} )
                           \Phi_{f_1^\dagger}(\pDII{p^{n + a}}{1})  
                           \alpha^{n-a}_2   \\  \displaybreak[0]
= &  \sum_{  n,a\geq 0} 
    ( t^{2n+2a+2} - t^{2n+2a  +4} )
                          \Phi_{f_1^\dagger}( \pDII{p^{-n + a}}{1})  
                           \alpha^{-n-a}_2   \\   \displaybreak[0]
 & +  \sum_{  n,a\geq 1} 
        ( t^{2n+2a+2} - t^{2n+2a} )
                           \Phi_{f_1^\dagger}( \pDII{p^{n - a}}{1})  
                           \alpha^{n+a}_2   \\   \displaybreak[0]
= & t^2(1-t^2) \sum_{  0\leq n, a} 
                     \alpha^{n+a}_2 t^{2n+2a }   \Phi_{f_1^\dagger}( \pDII{p^{-n + a}}{1})   \\   \displaybreak[0]
 & + t^4(t^2-1) \sum_{  0\leq n , a} 
                          \alpha^{n+a}_2 t^{2n+2a}   \Phi_{f_1^\dagger}( \begin{pmatrix} p^{n - a} & \\ & 1 \end{pmatrix})  \\   \displaybreak[0]
= & t^2(1-t^2)^2 \sum_{  0\leq n, a} 
                     \alpha^{n+a}_2 t^{2n+2a }    \Phi_{f_1^\dagger}( \begin{pmatrix} p^{n - a} & \\ & 1 \end{pmatrix}),
\end{align*}
and  the second summation is given by 
\begin{align*}
 &  \sum_{  n,a\in \Z} 
    ( t^{2|n-\frac{1}{2}|+2|a-\frac{1}{2}|+2} - t^{2|n-1| +2|a|  +2} )
                             \Phi_{f_1^\dagger}(w \begin{pmatrix} p^{n + a} & \\ & 1 \end{pmatrix})  
                           \alpha^{n-a}_2  \\   \displaybreak[0]
= &  \sum_{  n\geq 0 , a\geq 1} 
    ( t^{2n+2a+2} - t^{2n+2a  +4} )
                            \Phi_{f_1^\dagger}( w \begin{pmatrix} p^{-n + a} & \\ & 1 \end{pmatrix})  
                           \alpha^{-n-a}_2   \\   \displaybreak[0]
 & +  \sum_{  n\geq 1 , a\geq 0} 
        ( t^{2n+2a+2} - t^{2n+2a} )
                             \Phi_{f_1^\dagger}( w \begin{pmatrix} p^{n -a} & \\ & 1 \end{pmatrix})  
                           \alpha^{n+a}_2   \\   \displaybreak[0]
= & \alpha_2 t^4(1-t^2) \sum_{n, a\geq 0} 
                     \alpha^{n+a}_2 t^{2n+2a }    \Phi_{f_1^\dagger}( w\begin{pmatrix} p^{-n + a+1} & \\ & 1 \end{pmatrix})   \\    \displaybreak[0]
 & + \alpha_2 t^2(t^2-1) \sum_{n,a\geq 0} 
                          \alpha^{n+a}_2 t^{2n+2a}   \Phi_{f_1^\dagger}( w\begin{pmatrix} p^{n - a+1} & \\ & 1 \end{pmatrix})  \\   \displaybreak[0]
= &  \e\cdot t^2(1-t^2)^2 \sum_{  n, a\geq  0} 
                     \alpha^{n+a}_2 t^{2n+2a }   \Phi_{f_1^\dagger}( \begin{pmatrix} p^{ n - a} & \\ & 1 \end{pmatrix}).
\end{align*}
We thus conclude that
\beq\label{E:4.loc}
\vol(\cU_p)^{-1}\cdot\cZ_p(\test,f^\dagger)=(1+\e)\cdot t^2(1-t^2)^2 \sum_{  n, a\geq  0} 
                     (\alpha_2t^2)^{n+a }\cdot\Phi_{f_1^\dagger}( \pDII{p^{ n - a}}{1}).
\eeq
To evaluate the last infinite series in the above equation, we will use the following elementary identity:\begin{lm}\label{formalsum}
For indeterminates $T, X, Y$, we have 
\begin{align*}
\sum_{n,a\geq 0} X^{n+a} Y^{|n-a|} T^{|n-a-1|}  
    =& \frac{XY+T}{(1-X^2)(1-XYT)}.
\end{align*}
\end{lm}
Suppose that $\pi_\frakp=\pi(\mu_1,\mu_1^{-1})$ is spherical with Satake parameters  $\al_1=\mu(p)$ and $\beta_1=\mu_1(p)^{-1}$. 
We have $\#(\cU_p\pDII{p^m}{1}\cU_p/\cU_p)=t^{-\abs{m}}$ and  
  \begin{align*} \Phi_{f_\frakp^\dagger}( \pDII{p^m}{1})=& t^{-\abs{m}}\cdot\cB_{\pi_\frakp}(\pi_\frakp(\pDII{p^m}{1})f_\frakp^0-\al_2\pi_\frakp(\pDII{p^{m-1}}{1})f_\frakp^0,f_\frakp^0-\al_2\pi_\frakp(\pDII{1}{p})f_\frakp^0 )\\
=&  t^{-\abs{m}}\cdot(2\bfc_1(m)-\al_2\bfc_1(m+1)-\al_2\bfc_1(m-1)). \end{align*}
By Macdonald's formula \eqref{E:MC.loc}, we thus find that the last inifnite series in \eqref{E:4.loc} equals\begin{align*}
 & \sum_{n,a\geq 0}  (\alpha_2 t^2)^{n+a }  
        \Phi_{f_\frakp^\dagger}( \pDII{p^{n-a}}{1})  \\
 = & \sum_{n, a\geq 0}   (\alpha_2 t^2)^{n+a } 
       \cdot t^{-|n-a|} 
       \cdot ( 2\bfc(n-a) - \alpha_2\bfc(n-a+1)-\al_2\bfc(n-a-1))\\
 = & 2 \sum_{ n, a\geq 0}  (\alpha_2 t^2)^{n+a } 
       \cdot t^{-|n-a|} 
      \cdot(\bfc(n-a)   - \alpha_2\bfc(n-a-1)   )  \\
 = & \frac{2}{1+t} \sum_{n, a\geq 0}  (\alpha_2 t^{2})^{n+a}  
        \left\{   (\alpha_1t^{-\frac{1}{2}})^{|n-a|} A_1 + (\beta_1t^{-\frac{1}{2}})^{|n-a|} B_1   - \alpha_2 t^{-|n-a|} (\alpha_1t^{\frac{1}{2}})^{|n-a-1|} A_1 + (\beta_1t^{\frac{1}{2}})^{|n-a-1|} B)   \right\}.
\end{align*} 
Applying \lmref{formalsum}, the above equation equals
\begin{align*}
 & \frac{2A_1}{(1+t)(1-t^4)(1-\alpha_1\alpha_2 t^{\frac{3}{2}} )}  
        \left\{  (1+\alpha_1 \alpha_2 t^{\frac{3}{2}}) -\alpha_2 (\alpha_2 t +\alpha_1t^{\frac{1}{2}})     \right\}   \\ \displaybreak[0]
 & + \frac{ 2B_1 }{(1+t)(1-t^4)(1-\beta_1\alpha_2 t^{\frac{3}{2}} )}  
        \left\{  (1+\beta_1 \alpha_2 t^{\frac{3}{2}}) -\alpha_2 (\alpha_2 t +\beta_1t^{\frac{1}{2}})     \right\} \\  \displaybreak[0]
=  & \frac{2A_1(1-t)(1-\alpha_1\alpha_2 t^{\frac{1}{2}})}{(1+t)(1-t^4)(1-\alpha_1\alpha_2 t^{\frac{3}{2}} )}  
    + \frac{ 2B_1(1-t)(1-\beta_1\alpha_2 t^{\frac{1}{2}}) }{(1+t)(1-t^4)(1-\beta_1\alpha_2 t^{\frac{3}{2}} )}    \\    \displaybreak[0]
=  & \frac{2(1-t)}{(1+t)(1-t^4)}  
      \left\{ \frac{A_1(1-\alpha_1\alpha_2 t^{\frac{1}{2}})}{1-\alpha_1\alpha_2 t^{\frac{3}{2}}} 
                 + \frac{B_1(1-\beta_1\alpha_2 t^{\frac{1}{2}})}{1-\beta_1\alpha_2 t^{\frac{3}{2}}}   \right\}  \\   \displaybreak[0]
=  & \frac{2(1-t)}{(1+t)(1-t^4)}
       \cdot  \frac{    (\alpha_1-\beta_1 t)(1-\alpha_1\alpha_2 t^{\frac{1}{2}})(1-\beta_1\alpha_2 t^{\frac{3}{2}})   
                           -   (\beta_1 -\alpha_1 t)(1-\beta_1\alpha_2 t^{\frac{1}{2}})(1-\alpha_1\alpha_2 t^{\frac{3}{2}})      
                          }{  (\alpha_1 -\beta_1) (1-\alpha_1\alpha_2 t^{\frac{3}{2}})  (1-\beta_1\alpha_2 t^{\frac{3}{2}}) }  \\   \displaybreak[0]
=  & \frac{2(1-t)}{(1+t)(1-t^4)}  
       \cdot \frac{    1+t - (\alpha_1+\beta_1)\alpha_2 t^{\frac{1}{2}}(1+t^2) 
                              + t^2+t^3     
                          }{   (1-\alpha_1\alpha_2 t^{\frac{3}{2}})  (1-\beta_1\alpha_2 t^{\frac{3}{2}}) }  \\  \displaybreak[0]
=  & \frac{ 2}{(1+t)^2}
       \cdot\frac{   (1-\al_1\al_2t^\onehalf)(1-\beta_1\al_2 t^\onehalf)
               }{   (1-\alpha_1\alpha_2 t^{\frac{3}{2}})  (1-\beta_1\alpha_2 t^{\frac{3}{2}}) }.  
\end{align*}
Therefore, \eqref{E:4.loc} yields 
\[\cZ_p(\test,f^\dag)=\vol(\cU_p)t^2\cdot\frac{ L(1,{\rm As}^+(\pi))}{\zeta_\q(1)\zeta_\q(2)}\cdot \frac{4(1-\al_1\al_2t^\onehalf)(1-\beta_1\al_2 t^\onehalf)}{1+t},\]
and we complete the proof in this case by noting that 
\[\cB_{\piHo}(f^\dag,f^\dag)=\cB_{\pi_\frakp}(f^\dag_\frakp,f^\dag_\frakp)=
2(1-\al_2\cdot \cB_{\pi_\frakp}(\pi_\frakp(\pDII{1}{p})f_\frakp^0,f_\frakp^0))=\frac{2(1-t^\onehalf\al_2(\al_1+\beta_1))}{1+t}.\]

Suppose that $\pi_\frakp={\rm St}\ot(\chi_1\circ\det)$ is special. Let $\al_1=\chi_1(p)$. Then $\e=\e_\frakp\e_{\frakp^c}=\al_1\al_2$. We have 
\begin{align*}
    \sum_{n,a\geq 0}  (\alpha_2 t^2)^{n+a }  
        \Phi_{f_1^\dagger}( \pDII{p^{n-a}}{1}))  = & \sum_{n,a\geq 0}(\alpha_2 t^2)^{n+a } \al_1^{\abs{n-a}} 
= \frac{1+\al_1\al_2t^2}{(1-t^4)(1-\al_1\al_2 t^2)}.
\end{align*}
Recall that \[L(s,{\rm As}^+(\pi))=L(1,\pi_\frakp\ot\pi_{\frakp^c})=(1-\al_1\al_2 t^s)^{-1}(1-\al_1\al_2 t^{s+1})^{-1}.\]
We find that 
\begin{align*}
\cZ_p(\test,f^\dag)&=\vol(\cU_p)\cdot (1+\e)\cdot t^2(1-t^2)^2\frac{(1+\e t^2)(1-\e t)}{1-t^4}\cdot L(1,{\rm As}^+(\pi))\\
=&\vol(\cU_p)t^2\cdot (1+\e)\cdot \frac{L(1,{\rm As}^+(\pi))}{\zeta_\q(1)\zeta_\q(2)}.
\end{align*}
This finishes the proof of the case (ii).
\end{proof}

\begin{prop}\label{P:splitb.loc}If $p\divides N^-$, then we have
\[      \cZ_p(\test,f^\dag)
  =   \vol(\cU_p)\cdot p^{-2}(  1+   \e_\frakp\e_{\frakp^c})\cdot\frac{L(1,{\rm As}^+(\pi))}{\zeta_\q(1)\zeta_\q(2)}. \]
\end{prop}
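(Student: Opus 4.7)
The plan is to exploit the fact that since $\q \mid N^-$, the local quaternion algebra $D_{0,\q}$ is the division algebra over $\Q_\q$. Because $\q$ also splits in $F$, both local components of $\pi$ above $\q$ are twisted Steinberg ($\pi_\frakp \iso \mathrm{St} \otimes (\chi_\frakp \circ \det)$ and likewise for $\frakp^c$, with unramified quadratic characters $\chi_\frakp,\chi_{\frakp^c}$ of $\Q_\q^\times$), so by Jacquet--Langlands the local component $\piHo_\q = \pi^D_\frakp \boxtimes \pi^D_{\frakp^c}$ is one-dimensional and given by $(\chi_\frakp \circ \mathrm{n}) \boxtimes (\chi_{\frakp^c} \circ \mathrm{n})$ on $D_{0,\q}^\times \times D_{0,\q}^\times$. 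Under the split identification of Remark~\ref{R:split}, the matrix coefficient therefore takes the explicit form $\cB_\piHo(\piHo(h)f^0,f^0) = \chi_\frakp(\mathrm{n}(a))\chi_{\frakp^c}(\mathrm{n}(d))$ for $h = [(a,d)] \in H^0(\Q_\q)$, and unfolding the Atkin--Lehner recipe of \eqref{E:Atkin2} yields $\chi_\frakp(\q)\chi_{\frakp^c}(\q) = \e_\frakp\e_{\frakp^c}$.

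Next I will reduce the integral \eqref{E:1.loc} to a finite sum over double cosets. Under the same identification $V_\q \cong D_{0,\q}$ with quadratic form equal to the reduced norm, so $L_\q = \cO_{D_0,\q}$ and $\test_\q = \mathbb{I}_{\cO_{D_0,\q}^{\oplus 2}}$. Working through the similitude condition $\mathrm{n}(a) = \mathrm{n}(d)$ together with the $\Q_\q^\times$-quotient, I find that $\cU_\q$ is the set of classes $[(u_1,u_2)]$ with $u_i \in \cO_{D_0,\q}^\times$ and $\mathrm{n}(u_1) = \mathrm{n}(u_2)$. A Cartan-decomposition argument based on $D_{0,\q}^\times = \cO_{D_0,\q}^\times \pi_D^{\Z}$ then yields the double-coset decomposition $H^0_1(\Q_\q) = \cU_\q \sqcup \cU_\q \tau \cU_\q$ with $\tau = [(\pi_D,\pi_D)]$. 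Because $\pi_D$ normalizes $\cO_{D_0,\q}$ and units $(u_1,u_2)$ act by $x \mapsto u_1 x u_2^{-1}$ preserving $\cO_{D_0,\q}$, one sees that $\om(h)\test_\q = \test_\q$ for every $h \in H^0_1(\Q_\q)$, so the Weil factor is the constant $\cB_\om(\om(h)\test_\q,\test_\q) = \vol_{dx}(\cO_{D_0,\q})^2$. Meanwhile the matrix coefficient evaluates to $1$ on $\cU_\q$ and to $\e_\frakp\e_{\frakp^c}$ on $\cU_\q\tau\cU_\q$.

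Since $\tau$ normalizes $\cU_\q$, we have $\vol(\cU_\q \tau \cU_\q) = \vol(\cU_\q)$, hence
\[
\cZ_\q(\test_\q,f^\dag_\q) \;=\; \vol_{dx}(\cO_{D_0,\q})^2 \cdot \vol(\cU_\q) \cdot (1 + \e_\frakp\e_{\frakp^c}).
\]
The remaining step is to pin down the self-dual volume $\vol_{dx}(\cO_{D_0,\q})$. The pairing $(x,y) = \mathrm{Tr}_{\mathrm{red}}(xy^*)$ makes the dual of $\cO_{D_0,\q}$ equal to the inverse different $\pi_D^{-1}\cO_{D_0,\q}$ (the different of a ramified local division quaternion algebra equals $\pi_D\cO_{D_0,\q}$, since the ramification index is $2$), and the self-dual identity $\vol(L)\vol(L^*) = 1$ together with the residue-field count $[\pi_D^{-1}\cO_{D_0,\q} : \cO_{D_0,\q}] = \q^2$ then forces $\vol_{dx}(\cO_{D_0,\q}) = \q^{-1}$.

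To match the claimed formula, a short Langlands-parameter computation (or direct use of Definition~\ref{D:AsaiL}(i)) gives
\[
\frac{L(1,\Asai^+(\pi_\q))}{\zeta_\q(1)\zeta_\q(2)} \;=\; \frac{(1-\q^{-1})(1-\q^{-2})}{(1-\e_\frakp\e_{\frakp^c}\q^{-1})(1-\e_\frakp\e_{\frakp^c}\q^{-2})},
\]
so the product $(1+\e_\frakp\e_{\frakp^c}) \cdot L(1,\Asai^+(\pi_\q))/(\zeta_\q(1)\zeta_\q(2))$ collapses to $1+\e_\frakp\e_{\frakp^c}$: when $\e_\frakp\e_{\frakp^c} = 1$ the $L$-ratio equals $1$, and when $\e_\frakp\e_{\frakp^c} = -1$ the prefactor annihilates both sides. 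The main technical point I anticipate is the bookkeeping of the $\Q_\q^\times$-quotient when enumerating the double cosets and identifying $\cU_\q$; once this and the self-dual volume calculation are handled, the final assembly into the proposition is automatic.
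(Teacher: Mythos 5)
Your proof is correct and follows the same route as the paper's: reduce the integral to a sum over the two double cosets $\cU_\q$ and $\cU_\q(\pi_D,\pi_D)\cU_\q$, use that $\omega(h)\test_\q=\test_\q$ for every $h\in H^0_1(\Q_\q)$ (so the Weil factor is the constant $\vol(\cO_{D_{0,\q}})^2=p^{-2}$), and evaluate the one-dimensional Jacquet--Langlands matrix coefficient to get the factor $1+\e_\frakp\e_{\frakp^c}$. The only differences are that you spell out the self-dual volume via the different of $\cO_{D_{0,\q}}$ and the $L$-factor comparison via Langlands parameters, both of which the paper leaves implicit.
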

\begin{proof}
In this case, $D_{0,p}$ is the division quaternion algebra over $\Q_\q$ and $\cU_p=H^0_1(\Qp)\cap (R_0^\x\times R_0^\x)/\Zp^\x$ where $R_0=\stt{a\in D_0\mid {\rm n}(a)\in\Z_\q}$ is the maximal order of $D_0$. Note that $\test$ is the characteristic function on $R_0\oplus R_0$ and $H^0_1(\Q_\q)=\stt{(a,d)\in H^0(\Q_\q)\mid {\rm n}(a)={\rm n}(d)}$, it follows that for $h\in H^0_1(\Q_\q)$, $\om(h)\test=\test$. In addition, let $\uf_p\in D_0$ with ${\rm n}(\uf_\q)=\q$. Then $\stt{(1,1),(\uf_\q,\uf_\q)}$ is a complete set of representatives of the double coset space $\cU_p\bksl H^0_1(\Q_\q)/\cU_\q$. As $f^\dag=f^0$ is invariant by $\cU_\q$ and is also an eigenvector of $(\uf_\q,\uf_\q)$ with eigenvalue $\e_\frakp\e_{\frakp^c}$, we find that 
\[\cZ_p(\test,f^\dag)=\vol(\cU_\q)\vol(R_0)^2(1+\chi_1\chi_2(p))=\vol(\cU_p)p^{-2}\cdot (1+\e_\frakp\e_{\frakp^c}).\]
Now the proposition follows from the fact that
\[L(1,{\rm As}^+(\pi))=(1-\e_\frakp\e_{\frakp^c}p^{-1})^{-1}(1-\e_\frakp\e_{\frakp^c}p^{-2})^{-1}.\qedhere\]
\end{proof}

\subsection{Local integrals at finite places: the non-split case}\label{sec(In)}
Let $\q$ be a prime inert or ramified in $F$. Then $F$ is a quadratic extension over $\Qp$, $D={\rm M}_2(F)$, $H^0(\Qp)=(\GL_2(F)\times \Qp^\x)/F^\x$, and $\piHo=\pi\boxtimes\bfone$, where $\pi$ is an admissible and irreducible representation of $\PGL_2(F)$. Let $\cO=\cO_F$. For $m\in\Z$, we define 
\begin{align*}
 u_m = (\pDII{p^m}{1}, p^{m}), \quad w= (\begin{pmatrix} 0 & -1 \\ 1 & 0 \end{pmatrix}, 1 )\in H^0_1(\Qp).
\end{align*}

\begin{prop}\label{P:inert.loc}
   Suppose that $p$ is inert in $F$. If $p\ndivides N^+$, then  
\begin{align*}
\cZ_p(\test,f^\dag)=&\vol(\cU_p)\cdot \frac{L(1,{\rm As}^+(\pi))}{\zeta_p(2)\zeta_p(4)},\\
 \intertext{ and if $p\divides N^+$, then }
         \cZ_p(\test,f^\dag)
             =& \vol(\cU_p)\cdot p^{-2}(1+\e_p) \cdot \frac{L(1,{\rm As}^+(\pi))}{\zeta_\q(1)\zeta_\q(2)}.\end{align*}
\end{prop}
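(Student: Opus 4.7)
The plan is to mirror the strategy of the split case (Proposition \ref{P:splita.loc}), adapted to the structure of the inert place. Here $F_p$ is the unramified quadratic extension of $\Q_p$, with $p$ still a uniformizer of $\cO_{F_p}$ and residue cardinality $q=p^2$. First I would parametrize a set of representatives for the double coset space $\cU_p\backslash H^0_1(\Q_p)/\cU_p$. Since $H^0(\Q_p)=(\GL_2(F_p)\times\Q_p^\times)/F_p^\times$ and the similitude constraint $\alpha^2=\mathrm{N}_{F_p/\Q_p}(\det a)$ is satisfied by $u_m=(\diag{p^m,1},p^m)$, the Cartan decomposition of $\GL_2(F_p)$ yields the representatives $\{u_m\}_{m\geq 0}$ when $p\nmid N^+$ (so $\cU_p$ is hyperspecial). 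When $p\mid N^+$ (equivalently $\frakp\mid\frakN^+$, since $p$ is inert), $\cU_p$ becomes an Iwahori-type subgroup and one must enlarge to $\{u_m, wu_m\}_{m\in\Z}$ via the Iwahori--Bruhat decomposition, where $w$ is the Weyl element as in the split case.

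Next I would compute the two matrix coefficients entering \eqref{E:finielocal}. The Weil-representation pairing $\cB_{\omega}(\omega(u_m)\test,\test)$ reduces to the volume of $u_m^{-1}(L_p\oplus L_p)\cap(L_p\oplus L_p)\subset\bfX_p$, since $\test_p$ is the characteristic function of $L_p\oplus L_p$; this yields an explicit power of $q^{-1}=p^{-2}$ in each case. The representation pairing $\cB_{\piHo}(\piHo(u_m)f^\dag,f^\dag)$ is to be computed via Macdonald's formula on $\PGL_2(F_p)$ with residue cardinality $q=p^2$ in the spherical case $p\nmid N^+$, and via the Steinberg-twist matrix-coefficient identity \eqref{matcoeffSpSt} in the Steinberg case $p\mid N^+$. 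Note that $p\notin\cP$ because $p$ is inert, so $f_p^\dag=f_p^0$ and there is no level-raising summand to unpack, simplifying the analysis compared to the split $p\mid N^+$ situation.

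I would then assemble \eqref{E:1.loc} as a geometric series in $m$ (plus a parallel series from the $wu_m$ cosets in the Iwahori case) and simplify by partial-fraction manipulations in the style of the split case. For $\pi=\pi(\mu,\mu^{-1})$ with $\mu(p)=\alpha$, the target to match is
\[
\frac{L(1,\mathrm{As}^+(\pi))}{\zeta_p(2)\zeta_p(4)}=\frac{1-p^{-4}}{(1-\alpha p^{-1})(1-\alpha^{-1}p^{-1})},
\]
using the inert case of Definition \ref{D:AsaiL}(ii). For the Steinberg twist $\pi=\mathrm{St}\otimes(\chi\circ\det)$, the same mechanism that produced the $(1+\varepsilon)$ factor in the proof of Proposition \ref{P:splita.loc} (the sign contributions from $w$-translates cancelling or reinforcing the $1$-translate) produces the factor $(1+\varepsilon_p)$ with $\varepsilon_p=\varepsilon_\frakp$ the Atkin--Lehner eigenvalue.

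The main obstacle will be the careful bookkeeping of the two scales $p^{-s}$ (natural for the $\Q_p^\times$-restriction factors $L(s,\mu^{\pm 1}|_{\Q_p^\times})$ in the Asai $L$-function) and $q^{-s}=p^{-2s}$ (natural for the $F_p^\times$-factor $L(s,\mu\nu)$, for Macdonald's formula, and for the lattice-volume computations). Ensuring that the rational function obtained by summing the series collapses exactly to the claimed form---with the factor $(1-p^{-2})^{-1}$ contributed by $L(1,\mu\nu)=(1-p^{-2})^{-1}$ cancelling cleanly against the $(1-p^{-2})$ coming from $\zeta_p(2)$ in the denominator, while the Macdonald numerator produces the remaining $(1-p^{-4})$---is the delicate symbolic step and will be verified in the same manner as in the split case.
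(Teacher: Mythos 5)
Your proposal is essentially the same as the paper's proof: Cartan decomposition (reps $\{u_m\}_{m\ge 0}$) with Macdonald's formula at residue cardinality $q=p^2$ in the spherical case, and the Iwahori decomposition (reps $\{u_m,wu_m\}_{m\in\Z}$) with the Steinberg matrix-coefficient identity in the $p\mid N^+$ case, then summing geometric series and matching to the inert Asai $L$-factor via Definition \ref{D:AsaiL}(ii). The paper carries out exactly this, including the observation $p\notin\cP$ and the bookkeeping $t=p^{-2}$, so no further comment is needed.
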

\begin{proof}We let \[t=p^{-2}.\] 
\ul{Case (i) $p\ndivides N_F$:} Then $\pi=\pi(\mu,\mu^{-1})$ is an unramified principal series. Let $R={\rm M}_2(\cO)$. Then $\cU_p=H_1^0(\Qp)\cap (R^\x\times\Qp^\x)/F^\x$ and $f^\dag=f^0$. The set $\stt{u_m\mid m\in\Z_{\geq 0}}$ is a complete set of representatives of $\cU_p\bksl H_1^0(\Qp)/\cU_p$ by Cartan decomposition, and we have \[\cB_\om(\om(u_m)\test,\test)=t^{\abs{m}}\text{ for }m\in\Z.\] Let $\al=\mu(p)$ and $\beta=\mu(p)^{-1}$. Then $\abs{\al},\abs{\beta}\leq t^{1/5-1/2}$ by the Ramanujaun bound in \cite{lrs99}. By \eqref{E:1.loc} and Macdonald's formula, $\vol(\cU_p)^{-1}\cZ_p(\test,f^\dag)$ equals
\begin{align*}
&1+\sum_{m\geq 1}\cB_\om(\om(u_m)\test,\test)\cB_\pi(\pi(u_m)f^0,f^0)\#(\cU_p u_m \cU_p/\cU_p)\\
=&1+\sum_{m\geq 1}t^m\cdot t^{m/2}(\al^m\frac{\al -\beta t}{\al-\beta}-\beta^m \frac{\beta-\al t}{\al-\beta})\cdot t^{-m}(1+t)\\
=&1+\frac{(\al+\beta)t^\onehalf-t-t^2}{(1-\al t^\onehalf)(1-\beta t^\onehalf)}\\
=&\frac{(1-t^2)(1-t)}{(1-\al t^\onehalf)(1-\beta t^\onehalf)(1-t)}=\frac{L(1,{\rm As}^+(\pi))}{\zeta_p(2)\zeta_p(4)}.
\end{align*}

\ul{Case (ii) $p\divides \frakN^+$:} In this case, $\pi={\rm St}\ot\chi\circ\det$ is the unramified special representation of $\GL_2(F)$ attached to a quadratic character $\chi:F^\x\to\stt{\pm 1}$. Let $R$ be the Eichler order of $p$ given by 
\[R=\stt{g\in {\rm M}_2(\cO)\mid g\con \pMX{*}{*}{0}{*}\pmod{p\cO}}.\]
Then $\cU_p=H^0_1(\Qp)\cap (R^\x\times\Zp^\x)/\cO^\x$, and the set\begin{align*}
\stt{u_m,wu_m}_{m\in\Z} 
\end{align*}
is a complete set of representatives of the double coset space $\cU_p\bksl H^0_1(\Qp)/\cU_p$. Put
\[\bfS'_{a,b}=\stt{\pMX{x}{\delta y}{\delta z}{x^c}\mid x\in\cO, y\in \Zp\cap p^a\Zp,\,z\in p\Zp\cap p^b\Zp}.\]
Then we have 
\beq\label{E:5.loc}\begin{aligned}
\cB_\om(\om(u_m)\test,\test)&=\vol(\bfS'_{-m,m+1})^2=t^{\abs{m}+1},\\
\cB_\om(\om(wu_m)\test,\test)&=\vol(\bfS'_{1-m,m})^2=t^{\abs{m-1}+1}.\end{aligned}
\eeq
For $h=(g,\alpha)\in H^0(\Q_p)$, we put
\beq\label{E:6.loc}
\Phi_{f^0}(h) =\cB_\piHo(\piHo(h)f^0,f^0)\cdot \#(\cU_ph \cU_p/\cU_p).
\eeq
For $m\in \Z$, we have 
\[\Phi_{f^0}(u_m)=\chi(p)^m; \quad  \Phi_{f^0}(wu_m) = -\chi(p)^m. 
\]
The above equations along with \eqref{E:1.loc} imply that
\begin{align*}
\vol(\cU_p)^{-1} \cZ_p(\test,f^0)= &
       \sum_{m\in \Z}  
            \cB_\om( \om(u_m) \test, \test) \Phi_{f^0}(u_m)+ \cB(\om(wu_m) \test, \test ) \Phi_{f^0}(wu_m).
\end{align*}
By \eqref{E:5.loc} and \eqref{E:6.loc}, we have
\begin{align*}
 \sum_{m\in \Z}      \cB_\om( \om(u_m) \test, \test) \Phi_{f^0}(u_m)
   = & \sum_{m\geq 0}  t^{m+1} \cdot\chi(p)^m 
        +\sum_{ m \geq  1}  t^{m+1}\cdot\chi(p)^{m}    \\
    =& \frac{t+\chi(p)t^2}{1-\chi(p)t} , \\
 \sum_{m\in \Z} \cB_\om(\om(wu_m) \test, \test ) \Phi_{f^0}(wu_m)=&  -\sum_{m\geq 1}    t^{m} \cdot\chi(p)^m 
        - \sum_{ m \geq 0}  t^{m+2} \cdot\chi(p)^{m}    \\
   = & - \frac{\chi(p)t+t^2 }{1-\chi(p)t}.
\end{align*}
We thus obtain
\begin{align*}
\vol(\cU_p)^{-1} \cZ_p(\test,f^0)
=& \frac{t(1-\chi(p))  (1-t) }{1-\chi(p)t}\\
=&t(1-\chi(p))\cdot\frac{1+p^{-1}\chi(p)}{1-p^{-1}}\cdot \frac{(1-t)(1-p^{-1})}{(1-\chi(p)t)(1+p^{-1}\chi(p))}\\
=&p^{-2}(1-\chi(p))\cdot\frac{L(1,{\rm As}^+(\pi))}{\zeta_p(2)\zeta_p(1)}.
\end{align*}
This completes the proof.
\end{proof}

\begin{prop}\label{P:ramified.loc}Suppose that $p$ is ramified in $F$. Then we have 
\begin{align*}
   \cZ_p(\test,f^\dag)
 = &\vol(\cU_p)\cdot \abs{2^{-4}\Delta_F^3}_p(1+p^{-1})  \frac{L(1, {\rm As}^+(\pi) )}{\zeta_p(1)\zeta_p(2)}.
\end{align*}   
\end{prop}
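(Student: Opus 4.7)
Since the conductor $\frakN$ of $\pi$ is coprime to $\Delta_F$, at a ramified prime $p \mid \Delta_F$ the local component of $\pi$ is an unramified principal series $\pi(\mu, \mu^{-1})$ of $\PGL_2(F_p)$, and $f^\dagger = f^0$ is its spherical newform. The proof strategy parallels that of \propref{P:inert.loc}(i); the extra prefactor $|2^{-4}\Delta_F^3|_p$ will emerge from the self-dual volume of the lattice $L_p$ with respect to $\psi_p$.

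First I would make $V_p$ and $L_p$ explicit. Choose $\delta \in F_p^\times$ with $\delta^c = -\delta$ and $(\delta) = \mathfrak{D}_{F_p/\Q_p}$, so that $\delta^2$ generates $\Delta_F\Z_p$ up to a unit. The map $(a, b, c) \mapsto \begin{pmatrix} a & \delta b \\ \delta c & a^c \end{pmatrix}$ identifies $V_p \cong F_p \oplus \Q_p \oplus \Q_p$ with quadratic form ${\rm n}(x) = N_{F/\Q_p}(a) - \delta^2 bc$, and $L_p := R_p \cap V_p$ corresponds to $\cO_F \oplus \Z_p \oplus \Z_p$. A direct computation of the Gram matrix of ${\rm n}$ on this $\Z_p$-basis (using $|\delta^2|_p = |\Delta_F|_p$ and, in the dyadic case, the conductor of $\mathrm{tr}_{F/\Q_p}$) shows that its $p$-adic determinant has absolute value $|2^{-4}\Delta_F^3|_p$, so the self-dual volume satisfies $\vol(L_p)^2 = |2^{-4}\Delta_F^3|_p$.

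Next I would evaluate the integral \eqref{E:1.loc} over the double-coset space. By the Cartan decomposition for $\PGL_2(F_p)$, the set $\{u_m\}_{m \geq 0}$, with $u_m \in H^0_1(\Q_p)$ projecting to $\mathrm{diag}(\varpi_F^m, 1)$ in $\PGL_2(F_p)$, represents $\cU_p \bksl H^0_1(\Q_p)/\cU_p$. For each $m$, $\cB_\omega(\omega(u_m)\test, \test) = \vol(u_m^{-1}L_p \cap L_p)^2$ factors as $|2^{-4}\Delta_F^3|_p$ times a power of $|p|_p$ depending on $m$, and $\cB_{\piHo}(\piHo(u_m)f^0, f^0)$ is given by Macdonald's formula in the Satake parameters $\alpha = \mu(\varpi_F)$, $\beta = \mu(\varpi_F)^{-1}$. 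Summing the resulting geometric series in $\alpha,\beta$ and identifying with the Euler factor
\[L(1, {\rm As}^+(\pi)) = L(1, \mu|_{\Q_p^\x})\, L(1, \mathbf{1})\, L(1, \mu^{-1}|_{\Q_p^\x})\]
from \defref{D:AsaiL}(ii) should yield the stated formula.

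The main obstacle will be the precise volume tracking through the Weil representation in the dyadic case, where the extra $|2|_p^{-4}$ factor in the discriminant of the Gram matrix stems from the wild ramification of $F_2/\Q_2$. A convenient device is to substitute $(b, c) \mapsto (\delta b, \delta c)$ in the explicit coordinates, reducing $\test_p$ to the characteristic function of the standard lattice $\cO_F \oplus \delta\Z_p \oplus \delta\Z_p$ and transparently absorbing the Jacobian into the global prefactor; afterwards the remaining double-coset sum reduces to essentially the same geometric series as in the inert unramified case of \propref{P:inert.loc}(i).
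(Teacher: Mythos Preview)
Your overall strategy—Cartan decomposition, Macdonald's formula, summation of a geometric series—matches the paper's. However, there is a concrete error in your double-coset parameterization that would derail the computation.

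An element $(\pDII{\varpi_F^m}{1},\alpha)$ lies in $H^0_1(\Q_p)$ only if $\rmN_{F/\Q_p}(\varpi_F^m)=\alpha^2$ for some $\alpha\in\Q_p^\times$. Since $p$ is ramified, $\rmN_{F/\Q_p}(\varpi_F)$ has $p$-adic valuation $1$, so this forces $m$ to be even. Thus, while the Cartan decomposition of $\PGL_2(F_p)$ gives all $m\geq 0$, only the even ones lift to $H^0_1(\Q_p)$. The paper accordingly takes $u_m=(\pDII{p^m}{1},p^m)$, i.e.\ $\varpi_F^{2m}$ in Cartan coordinates, and Macdonald's formula then produces $\alpha^{2m},\beta^{2m}$ rather than $\alpha^m,\beta^m$. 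Summing over all $\varpi_F^m$ gives the wrong series; the squaring is exactly what makes the ramified Asai factor $(1-\alpha^2 t)^{-1}(1-t)^{-1}(1-\beta^2 t)^{-1}$ (with $t=p^{-1}$) appear and produces the extra $(1+p^{-1})$ in the statement.

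A secondary inaccuracy: your identification $L_p\cong\cO_F\oplus\Z_p\oplus\Z_p$ is correct only for odd $p$. In general, writing $\cO=\Z_p[\theta]$ and $\delta=\theta-\theta^c$, the condition $\delta y,\delta z\in\cO$ gives $y,z\in 2^{-1}\Z_p$; at $p=2$ this enlarges the lattice by a factor of $2$ in each of the two off-diagonal coordinates. The factor $|2^{-4}|_p$ is precisely the resulting change in the Gram determinant of $L_p$—not a separate artifact of wild ramification.
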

\begin{proof}
In this case, $(p,N^+N^-)=1$, and hence $\piHo=\pi\boxtimes\bfone$, where $\pi=\pi(\mu,\nu)$ is a spherical representation of $\PGL_2(F)$. Let $R={\rm M}_2(\cO)$. Then $\cU_p=H^0_1(\Qp)\cap (R^\x\times \Zp^\x)/\cO^\x$ and $\stt{u_m}_{m\in\Z_{\geq 0}}$ is a complete set of representatives of $\cU_P\bksl H^0_1(\Qp)/\cU_p$.
Write $\cO=\Zp\oplus\Zp\theta$ and put $\delta=\theta-\theta^c$. Note that $\delta^{-1}\cO$ is the different of $F/\Qp$.  Let $t=p^{-1}$.  Since $\test$ is the characteristic function of $L_p\oplus L_p$, where
\[L_p=\stt{\pMX{x}{\delta y}{\delta z}{x^c}\mid x\in\cO,y,z\in 2^{-1}\Zp},\]
we see that $\cB_\om(\om(u_m)\test,\test)=t^{2m}\cdot\abs{2^{-4}(\delta\ol{\delta})^3}_p$. On the other hand, we have
\begin{align*}\Phi_{f^0}(u_m)=&\cB_{\piHo}(\piHo(u_m)f^0,f^0)\cdot \#(\cU_p u_m \cU_p/\cU_p)\\
=&\cB_\pi(\pi(\pDII{p^m}{1})f^0,f^0)\cdot t^{-2m}(1+t).
\end{align*}
Let $\uf$ be a uniformizer of $\cO$. Let $\alpha=\mu(\varpi)$ and $\beta=\nu(\varpi)$. Then $\abs{\al},\abs{\beta}\leq t^{1/5-1/2}$ by \cite{lrs99}. We have $\Phi_{f^0}(u_0)=1$, and for $m>0$, by Macdonald's formula
\[\Phi_{f^0}(u_m)=\frac{t^{-m}}{\alpha-\beta}  ( \alpha^{2m}(\alpha-\beta t) -\beta^{2m}(\beta-\alpha t)).  \]
By \eqref{E:1.loc} and the above equations, we see that
\begin{align*}
\abs{2^4\Delta_F^{-3}}_p\vol(\cU_p)^{-1}\cZ_p(\test,f^\dag)= & \sum_{m\geq 0}  
          \abs{2^4\Delta_F^{-3}}_p \cB_\om(\om(u_m)\test,\test)\Phi_{f^0}(u_m). \\
= & 1+
     \sum_{m>0}  \frac{t^{m}}{\alpha-\beta}\cdot(\alpha^{2m+1}-\al^{2m-1}t-\beta^{2m+1}+\beta^{2m-1}t) )\\
= & 1+\frac{1}{\al-\beta}\cdot(\frac{\al^3t-\al t^2}{1-\al^2t}-\frac{\beta^3t-\beta t^2}{1-\beta^2t} )\\
= &  1 + \frac{t(\al^2+1+\beta^2-2t-t^2)}{(1-\al^2 t)(1-\beta t^2)} \\
= & \frac{1 + t- t^2 - t^3 }{ (1-\alpha^2 t)(1-\beta^2 t)}=\frac{(1 - t^2)(1 + t)(1-t)  }{ (1-\alpha^2 t)(1-t)(1-\beta^2t)  }  \\
= & (1+t) \cdot\frac{L(1, {\rm As}^+(\pi)) }{ \zeta_p(1)\zeta_p(2)  }.
\end{align*}
This proves the proposition.\end{proof}

\section*{Acknowledgment}
The authors would like to express their gratitude to Kazuki Morimoto for his careful reading and helpful comments. They also would like to thank the referee's many valuable comments which improves the presentation of this paper. This work was done while the second author was a postdocotral fellow in National Center for Theoretical Sciences in Taiwan. 
He is deeply grateful for their supports and hospitalities. 
During this work, the first author was partially supported by MOST grant 103-2115-M-002-012-MY5, and the second author was supported by JSPS Grant-in-Aid for Research Activity Start-up Grant Number 15H06634
and  Grant-in-Aid for Young Scientists (B) Grant Number 17K14174.

\bibliographystyle{amsalpha}
\bibliography{yoshidabib}

\providecommand{\bysame}{\leavevmode\hbox to3em{\hrulefill}\thinspace}
\providecommand{\MR}{\relax\ifhmode\unskip\space\fi MR }
\providecommand{\MRhref}[2]{%
  \href{http://www.ams.org/mathscinet-getitem?mr=#1}{#2}
}
\providecommand{\href}[2]{#2}
\begin{thebibliography}{BDSP12}

\bibitem[AK13]{ak13}
Mahesh Agarwal and Krzysztof Klosin, \emph{Yoshida lifts and the {B}loch-{K}ato
  conjecture for the convolution {$L$}-function}, J. Number Theory \textbf{133}
  (2013), no.~8, 2496--2537.

\bibitem[BDSP12]{bdsp12}
Siegfried B{\"o}cherer, Neil Dummigan, and Rainer Schulze-Pillot, \emph{Yoshida
  lifts and {S}elmer groups}, J. Math. Soc. Japan \textbf{64} (2012), no.~4,
  1353--1405.

\bibitem[BSP97]{bsp97}
Siegfried B{\"o}cherer and Rainer Schulze-Pillot, \emph{Siegel modular forms
  and theta series attached to quaternion algebras. {II}}, Nagoya Math. J.
  \textbf{147} (1997), 71--106, With erratum to: ``Siegel modular forms and
  theta series attached to quaternion algebras'' [Nagoya Math. J. {{\bf{1}}21}
  (1991), 35--96];.

\bibitem[Bum97]{bump97Grey}
Daniel Bump, \emph{Automorphic forms and representations}, Cambridge Studies in
  Advanced Mathematics, vol.~55, Cambridge University Press, Cambridge, 1997.

\bibitem[CH16]{ch16}
Masataka Chida and Ming-Lun Hsieh, \emph{Special values of anticyclotomic
  {$L$}-functions for modular forms}, J. Reine Angew. Math. (2016),
  DOI:10.1515/crelle-2015-0072.

\bibitem[GI11]{gi11}
Wee~Teck Gan and Atsushi Ichino, \emph{On endoscopy and the refined
  {G}ross-{P}rasad conjecture for {$(\rm SO_5,SO_4)$}}, J. Inst. Math. Jussieu
  \textbf{10} (2011), no.~2, 235--324.

\bibitem[GJ78]{gj78}
Stephen Gelbart and Herv{\'e} Jacquet, \emph{A relation between automorphic
  representations of {${\rm GL}(2)$} and {${\rm GL}(3)$}}, Ann. Sci. \'Ecole
  Norm. Sup. (4) \textbf{11} (1978), no.~4, 471--542.

\bibitem[GQT14]{gqt14}
Wee~Teck Gan, Yannan Qiu, and Shuichiro Takeda, \emph{The regularized
  {S}iegel-{W}eil formula (the second term identity) and the {R}allis inner
  product formula}, Invent. Math. \textbf{198} (2014), no.~3, 739--831.

\bibitem[GS15]{shahidi15}
Neven Grbac and Freydoon Shahidi, \emph{Endoscopic transfer for unitary groups
  and holomorphy of {A}sai {$L$}-functions}, Pacific J. Math. \textbf{276}
  (2015), no.~1, 185--211.

\bibitem[HN17]{hn15}
Ming-Lun Hsieh and Kenichi Namikawa, \emph{Bessel periods and the non-vanishing
  of {Y}oshida lifts modulo a prime}, Math. Z. 
  \textbf{285}, (2017), 851--878.

\bibitem[JLR12]{lbrooks12}
Jennifer Johnson-Leung and Brooks Roberts, \emph{Siegel modular forms of degree
  two attached to {H}ilbert modular forms}, J. Number Theory \textbf{132}
  (2012), no.~4, 543--564.

\bibitem[Kli59]{kl59}
Helmut Klingen, \emph{Bemerkung \"uber {K}ongruenzuntergruppen der
  {M}odulgruppe {$n$}-ten {G}rades}, Arch. Math. \textbf{10} (1959), 113--122.

\bibitem[Kri03]{kris03}
M.~Krishnamurthy, \emph{The {A}sai transfer to {$\rm GL_4$} via the
  {L}anglands-{S}hahidi method}, Int. Math. Res. Not. (2003), no.~41,
  2221--2254.

\bibitem[Kri12]{kris12}
\bysame, \emph{Determination of cusp forms on {${\rm GL}(2)$} by coefficients
  restricted to quadratic subfields (with an appendix by {D}ipendra {P}rasad
  and {D}inakar {R}amakrishnan)}, J. Number Theory \textbf{132} (2012), no.~6,
  1359--1384.

\bibitem[LRS99]{lrs99}
Wenzhi Luo, Ze\'ev Rudnick, and Peter Sarnak, \emph{On the generalized
  {R}amanujan conjecture for {${\rm GL}(n)$}}, Automorphic forms, automorphic
  representations, and arithmetic ({F}ort {W}orth, {TX}, 1996), Proc. Sympos.
  Pure Math., vol.~66, Amer. Math. Soc., Providence, RI, 1999, pp.~301--310.
  \MR{1703764}

\bibitem[PW11]{pw11}
Robert Pollack and Tom Weston, \emph{On anticyclotomic {$\mu$}-invariants of
  modular forms}, Compos. Math. \textbf{147} (2011), no.~5, 1353--1381.

\bibitem[Rob01]{ro01}
Brooks Roberts, \emph{Global {$L$}-packets for {${\rm GSp}(2)$} and theta
  lifts}, Doc. Math. \textbf{6} (2001), 247--314 (electronic).

\bibitem[Sah15]{saha15}
Abhishek Saha, \emph{On ratios of {P}etersson norms for {Y}oshida lifts}, Forum
  Math. \textbf{27} (2015), no.~4, 2361--2412.

\bibitem[Sch02]{schmidt02}
Ralf Schmidt, \emph{Some remarks on local newforms for gl(2)}, J. Ramanujan
  Math. Soc. (2002).

\bibitem[Sie43]{si43}
Carl~Ludwig Siegel, \emph{Symplectic geometry}, Amer. J. Math. \textbf{65}
  (1943), 1--86.

\bibitem[SS13]{saha13}
Abhishek Saha and Ralf Schmidt, \emph{Yoshida lifts and simultaneous
  non-vanishing of dihedral twists of modular {$L$}-functions}, J. Lond. Math.
  Soc. (2) \textbf{88} (2013), no.~1, 251--270.

\bibitem[Tak09]{takeda09}
Shuichiro Takeda, \emph{Some local-global non-vanishing results for theta lifts
  from orthogonal groups}, Trans. Amer. Math. Soc. \textbf{361} (2009), no.~10,
  5575--5599.

\bibitem[Tak11]{takeda11}
\bysame, \emph{Some local-global non-vanishing results of theta lifts for
  symplectic-orthogonal dual pairs}, J. Reine Angew. Math. \textbf{657} (2011),
  81--111.

\bibitem[Tat79]{tate77Corvallis}
John Tate, \emph{Number theoretic background}, Automorphic forms,
  representations and {$L$}-functions ({P}roc. {S}ympos. {P}ure {M}ath.,
  {O}regon {S}tate {U}niv., {C}orvallis, {O}re., 1977), {P}art 2, Proc. Sympos.
  Pure Math., XXXIII, Amer. Math. Soc., Providence, R.I., 1979, pp.~3--26.

\bibitem[Yos80]{yo80}
Hiroyuki Yoshida, \emph{Siegel's modular forms and the arithmetic of quadratic
  forms}, Invent. Math. \textbf{60} (1980), no.~3, 193--248.

\end{thebibliography}

\end{document}